\newcommand{\beal}{\begin{align}}
\newcommand{\enal}{\end{align}}
\newcommand{\bealn}{\begin{align*}}
\newcommand{\enaln}{\end{align*}}
\newcommand{\bear}{\begin{eqnarray}}
\newcommand{\eear}{\end{eqnarray}}
\newcommand{\beeq}{\begin{equation}}
\newcommand{\eneq}{\end{equation}}
\newcommand{\eps}{{\varepsilon}}
\newcommand{\R}{{\mathbb R}}
\newcommand{\Z}{{\mathbb Z}}
\newcommand{\la}{\langle}
\newcommand{\ra}{\rangle}
\def\bm{\left[ \begin{array}{cc}}
\def\endm{\end{array}\right]}
\def\eps{\varepsilon}
\def\bm{\left[\begin{matrix} }
\def\endm{\end{matrix}\right]}
\def\la{\langle}
\def\ra{\rangle}
\def\R{{\mathbb R}}
\newtheorem{theorem}{Theorem}
\newtheorem{lemma}[theorem]{Lemma}
\newtheorem{prop}[theorem]{Proposition}
\theoremstyle{remark}
\newtheorem{remark}[theorem]{Remark}
\renewcommand{\Re}{\,{\rm Re}\,}
\renewcommand{\hat}{\widehat}
\renewcommand{\epsilon}{\eps}
\renewcommand{\tilde}{\widetilde}
\numberwithin{equation}{section}
\numberwithin{theorem}{section}
\newcommand{\eqdef}{\overset{\mbox{\tiny{def}}}{=}}
\newcommand{\dNOTj}{D^{(j)}}
\newcommand{\dNOTjO}{D^{(j-1)}}
\newcommand{\dNOTjT}{D^{(j-2)}}
\newcommand{\ang}[1]{ \left< {#1} \right> }
\renewcommand{\dim}{3}
\newcommand{\threed}{{\mathbb R}^\dim}
\begin{document}


\title[Global solutions to a non-local diffusion equation]{Global solutions to a non-local diffusion equation with quadratic non-linearity}

\author[J. Krieger]{Joachim Krieger}
\address{(JK) Department of Mathematics, The University of Pennsylvania, 209 South 33rd Street, Philadelphia, PA 19104, U.S.A.}
\email{kriegerj at math.upenn.edu}
\urladdr{http://www.math.upenn.edu/~kriegerj/}
\thanks{J.K. was partially supported by the NSF grant DMS-0757278.}

\author[R. M. Strain]{Robert M. Strain}
\address{(RMS) 
University of Pennsylvania, Department of Mathematics, David
Rittenhouse Lab, 209 South 33rd Street, Philadelphia, PA 19104, U.S.A.}
\email{strain at math.upenn.edu}
\urladdr{http://www.math.upenn.edu/~strain/}
\thanks{R.M.S. was partially supported by the NSF grant DMS-0901463, and an Alfred P. Sloan Foundation Research Fellowship.}


\setcounter{tocdepth}{1}

\begin{abstract}
In this paper we prove the global in time well-posedness of the following non-local diffusion equation with $\alpha \in (0,2/3)$:
$$
\partial_t u = \left\{(-\triangle)^{-1}u\right\} \triangle u + \alpha u^2,
\quad  u(t=0) = u_0.
$$
The initial condition $u_0$ is positive, radial, and non-increasing with
$u_0\in L^1\cap L^{2+\delta}(\threed)$ for some small $\delta >0$.  There is no size restriction on $u_0$.
This model problem appears of interest due to its structural similarity with Landau's equation from plasma physics, and moreover its radically different behavior from the semi-linear Heat equation: $u_t = \triangle u + \alpha u^2$. 
\end{abstract}


\maketitle
\tableofcontents

\thispagestyle{empty}

\section{Introduction and main results}

We study the following model equation for $\alpha \in (0, 2/3)$:
\begin{equation}\label{eqn:model}
\partial_t u = \left\{(-\triangle)^{-1}u\right\} \triangle u + \alpha u^2,\quad  u(0, x) = u_0,
\end{equation}
where as usual
$$
(-\triangle)^{-1}u 
=
\left(\frac{1}{4\pi |\cdot |} * u\right)(x)
=
\frac{1}{4\pi}\int_{\threed}dy~ \frac{u(y)}{|x-y|}.
$$  
We also consider $(t, x)\in \R_{\geq 0}\times\R^{3}$. Moreover, we shall restrict to $u_0$ positive and radial; a condition which is propagated by the equation.  Note
$$
\int_{\R^3} dx ~ u(t,x) + (1-\alpha) \int_0^t\int_{\R^3} dsdx ~ |u(s,x)|^2 = \int_{\R^3} dx ~ u_0(x).
$$
In other words for solutions to \eqref{eqn:model}, the quantity above is formally conserved.

Our motivation is partially derived from the spatially-homogeneous {\it{Landau equation 1936}} \cite{MR684990} in plasma physics, which takes the form 
$$
\partial_t f = \mathcal{Q}(f,f),
$$
where for $\partial_i = \frac{\partial}{\partial v_i}$ we have
$$
\mathcal{Q}(f,f)
\eqdef 
\sum_{i,j=1}^\dim \partial_i \int_{\threed} dv_* ~ a^{ij}(v-v_*) \left\{f(v_*) (\partial_j f)(v)  - f(v) (\partial_j f)(v_*)  \right\}.
$$
Here the projection matrix is given by
$$
a^{ij}(v) = \frac{L}{8\pi } |v|^{\gamma + 2} \left( \delta_{ij} - \frac{v_i v_j}{|v|^2} \right), \quad L > 0.
$$
The parameter  satisfies $\gamma \ge -3$, and we are solely concerned with the main physically relevant Coulombian case of $\gamma = -3$.  Then formally differentiating under the integral sign and integrating by parts we obtain 
$$
\mathcal{Q}(f,f) = \sum_{i,j=1}^\dim \bar{a}_{ij}(f)\partial_{i} \partial_{j} f  - 
\left(\int_{\threed} dv_* \sum_{i,j=1}^\dim \partial_i \partial_j a^{ij}(v-v_*) f(v_*) \right) f(v),
$$
where
\[
\bar{a}_{ij}(f) \eqdef \left(\frac{L}{8\pi | v|}\left(\delta_{ij}-\frac{v_i v_j}{| v|^2}\right)\right) * f.
\]
Furthermore $\sum_{i,j=1}^\dim \partial_i \partial_j a^{ij}(v-v_*)$  is a delta function, so that
\begin{equation}\label{Landau}
\partial_t f = \sum_{1\leq i, j\leq 3}\bar{a}_{ij}(f)\partial_{i} \partial_{j} f+ L f^2,\quad (t, v)\in \R_{\geq 0}\times\threed.  
\end{equation}
See \cite[Page 170, Eq. (257)]{MR1942465}.  We can set $L=1$ for simplicity.  

It is well known that non-negative solutions to \eqref{Landau} preserve the $L^1$ mass.  This suggests that 
\eqref{eqn:model} with $\alpha=1$ may be a good model for solutions to the Landau equation \eqref{Landau}. 
In particular we consider it important that these two models, \eqref{eqn:model} and \eqref{Landau},  have the same quadratic non-linearity and the same ``Coulomb'' type singularity multiplying in front of the diffusion (although the Landau equation also can be anisotropic).  
It appears that neither existence of global strong solutions for general large data, nor formation of singularities is known for either \eqref{Landau}, or \eqref{eqn:model}. In fact, we are not aware of any earlier studies of \eqref{eqn:model} in the literature.

For the Landau equation \eqref{Landau}, Desvillettes and Villani  \cite{MR1737547} have established the global existence of unique weak solutions and the instantaneous smoothing effect for a large class of initial data in the year 2000 with $\gamma \ge 0$.  Then Guo \cite{MR1946444} in 2002 proved the existence of classical solutions with the physical Coulombian interactions ($\gamma = -3$) for smooth nearby Maxwellian initial data.  For further  results in these directions we refer to \cite{MR2506070,MR2502525,MR1055522,MR1942465,MR2100057} and the references therein.

Furthermore, it is well known that the nonlinear heat equations such as
$$
\partial_t u = \Delta u + \alpha u^2, \quad \alpha > 0,
$$
will experience blow-up in finite time even for small initial data.  This problem has a long and detailed history which we omit.  We however refer to the results and discussion in \cite{MR1488298}, and the references therein, for more on this topic.

At one point, \cite[Page 170, Eq. (257)]{MR1942465}, it was thought that equations such as \eqref{Landau} could generally blow up in finite time.  It was a common point of view that the diffusive effects of the Laplace operator would be too weak to prevent the blow-up effects that are caused by a quadratic source term.  Then since the diffusion matrix such as $\bar{a}_{ij}(f)$ or $(-\triangle)^{-1}u$ may be bounded (or decay at infinity, such as in \eqref{asymptoticU} and \cite{MR1946444}) then blow-up may indeed occur, as is the case for the Heat equation.  

This intuition may no longer be as widespread as it once was for the Landau equation \cite{MR1942465}, in particular because it has a divergence structure and since also there seems to be lack of numerical simulations finding blow-up.  Yet these issues have still been without rigorous clarification.

Furthermore, for $u$ non-negative, we have that 
\[
\bar{a}_{ij}(u)\leq (-\triangle)^{-1}u.
\]
This gives the expectation that the diffusive 
effects of $ (-\triangle)^{-1}u \triangle u$ will be stronger than those of $ \sum_{1\leq i, j\leq 3}\bar{a}_{ij}(f)\partial^2_{ij}f$. 

The main contribution of this paper is to show that in contrast to the behavior of nonlinear heat equations, solutions to \eqref{eqn:model} indeed can exist globally in time even for large radial monotonic initial data.  We initiate the study of \eqref{eqn:model}, and attempt to construct global solutions for $\alpha>0$ as close to $1$ as possible. We  have 
\begin{theorem}\label{thm:Main} Let $0\leq\alpha<\frac{1}{2}$.  Suppose that $u_0(x)$ is positive, radial, and non-increasing with $u_0\in L^1(\threed)\cap L^{2+}(\threed)$.  Additionally suppose that  $-\triangle \tilde{u}_0\in L^2(\threed)$, where $\tilde{u}_0 \eqdef \la x \ra^{\frac{1}{2}}u_0$. 
Then there exists a unique global solution with\footnote{By this we mean that there is only one solution satisfying all these conditions.} 
\[
u(t, x)\in C^0([0, \infty), L^1\cap L^{2+}(\threed))\cap C^0({\R}_{\geq 0}, H^2(\threed)),
\]
\[
\la x \ra^{\frac{1}{2}}(-\triangle)u(t, x)\in C^0([0, \infty), L^2(\threed)).
\]
The solution decays toward zero at $t=+\infty$, in the following sense: 
\[
\lim_{t\to\infty}\| u(t, \cdot)\|_{L^q(\threed)}=0, \quad q\in (1, 2].
\]
\end{theorem}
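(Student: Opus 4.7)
The plan has four stages. First, local well-posedness in the stated class. Second, an $L^p$ dissipation identity that produces the global $L^{2+\delta}$ bound and forces $\int_0^\infty\!\!\int u^3\,dx\,dt<\infty$. Third, propagation of the weighted higher regularity $\la x\ra^{1/2}\triangle u\in L^2$, which is what really closes the global existence argument. Fourth, the $L^q$ decay, which will follow essentially for free from the second step. Local existence should be obtained by a standard fixed-point/regularization scheme, treating \eqref{eqn:model} as a linearly parabolic equation with variable coefficient $a(x,t)\eqdef(-\triangle)^{-1}u(x,t)$. For radial positive $u$ Newton's theorem gives
$$
a(r)=\frac{1}{r}\int_0^r s^2 u(s)\,ds+\int_r^\infty s\, u(s)\,ds,
$$
which is bounded, strictly positive on compacta, and behaves like $\|u\|_{L^1}/(4\pi r)$ as $r\to\infty$; positivity, radiality and monotonicity are preserved by the flow.

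The heart of the matter is the following $L^p$ dissipation identity. Multiplying \eqref{eqn:model} by $p u^{p-1}$, integrating by parts twice and using $\triangle\!\big((-\triangle)^{-1}u\big)=-u$, one arrives at
$$
\frac{d}{dt}\int u^p\,dx
= -p(p-1)\int u^{p-2}|\nabla u|^2\,(-\triangle)^{-1}u\,dx
+(p\alpha-1)\int u^{p+1}\,dx.
$$
The case $p=1$ recovers exactly the conservation law already noted in the introduction. For $1<p<1/\alpha$ both terms on the right are non-positive, so $\|u(t)\|_{L^p}$ is non-increasing on $[0,\infty)$ and the two space-time integrals on the right are finite there. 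The hypothesis $\alpha<1/2$ is precisely what is required to admit some $p=2+\delta>2$, yielding global $L^{2+\delta}$ control from the initial datum.

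The main obstacle is propagating the weighted higher regularity $\la x\ra^{1/2}\triangle u\in L^2$ globally. Since $a(x)\sim|x|^{-1}$ at infinity the effective diffusion weakens at large $|x|$, and the weight $\la x\ra^{1/2}$ is the natural balance for this degeneracy. My approach would be to differentiate \eqref{eqn:model}, test against a suitably weighted version of $\triangle u$, integrate by parts, and close a Gronwall-type inequality using Hardy-type inequalities together with the $L^p$ bounds above and the explicit radial structure of $a$. This bootstrap---rather than the dissipation identity itself---is presumably where most of the technical work in the paper lives, because the lower-order error terms involve $\nabla a$, $\triangle a=-u$, and various decay rates in $|x|$ that have to be tracked carefully in order to absorb everything against the weighted dissipation generated by $a\,\triangle u$.

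Finally, the time decay follows from the $p=2$ identity: $\|u(t)\|_{L^2}^2$ is non-increasing and $\int_0^\infty\!\!\int u^3\,dx\,dt<\infty$. Log-convexity of $p\mapsto\log\|u\|_{L^p}$ gives $\|u\|_{L^2}^4\le\|u\|_{L^1}\|u\|_{L^3}^3$, so the uniform bound $\|u(t)\|_{L^1}\le\|u_0\|_{L^1}$ forces $\int u^3\,dx$ to be bounded below by a positive multiple of $\|u(t)\|_{L^2}^4$; combined with monotonicity of $\|u(t)\|_{L^2}$ this compels $\|u(t)\|_{L^2}\to 0$ as $t\to\infty$, and interpolation with the $L^1$ bound yields $\|u(t)\|_{L^q}\to 0$ for every $q\in(1,2]$.
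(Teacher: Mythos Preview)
Your overall strategy matches the paper's: local existence, then the $L^p$ dissipation identity
\[
\frac{d}{dt}\int u^p\,dx=-p(p-1)\int u^{p-2}|\nabla u|^2\,(-\triangle)^{-1}u\,dx+(p\alpha-1)\int u^{p+1}\,dx
\]
to get global $L^{2+\delta}$ control when $\alpha<\tfrac12$, then propagation of weighted $H^2$ regularity, then decay. Your decay argument via $\|u\|_{L^2}^4\le\|u\|_{L^1}\|u\|_{L^3}^3$ and $\int_0^\infty\!\int u^3<\infty$ is actually a bit cleaner than the paper's, which instead extracts a subsequence from $\int_0^\infty\!\int u^2<\infty$ (the $p=1$ identity) and then invokes monotonicity; both are valid.

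There is, however, a genuine gap. You never address the \emph{uniform-in-time lower bound} $(-\triangle)^{-1}u(t,x)\gtrsim\la x\ra^{-1}$. Your Gronwall scheme for $\la x\ra^{1/2}\triangle u$ proposes to ``absorb everything against the weighted dissipation generated by $a\,\triangle u$,'' but that dissipation is only useful if $a=(-\triangle)^{-1}u$ does not degenerate as $t\nearrow T_{\max}$. A priori, mass could leak to infinity and drive $a$ to zero on compacta. The paper handles this by tracking $\int_{r_0<|x|<r_0^{-1}}u(t,x)\,dx$ directly: testing the equation against a carefully built radial cutoff $\chi$ and using monotonicity of $u$ yields a differential inequality $f'(t)\ge -C f(t)$, hence an exponential lower bound on the mass in the annulus, hence the required lower bound on $(-\triangle)^{-1}u$ on any finite time interval. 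Without this step neither the local theory nor the regularity bootstrap closes.

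A second point: you call the local theory ``standard,'' but it is not. Because $a(x)\sim|x|^{-1}$ the operator $a\triangle$ is genuinely degenerate at infinity, and the natural $H^2$-type space must carry the weight $\la x\ra^{1/2}$. The paper devotes essentially all of Section~2 to this: it conjugates to a self-adjoint operator $A(t)u=g(-\triangle)(gu)+u$ with $g=a^{1/2}$, verifies the resolvent and H\"older-in-$t$ hypotheses needed for Friedman's abstract parabolic framework, and runs an iteration $u^{(j)}$ with a delicate inductive recovery of the bounds (including the lower bound on $(-\triangle)^{-1}u^{(j)}$ and the weighted $H^2$ control). Your sketch is consistent with this, but ``standard fixed-point/regularization'' substantially undersells what is required.
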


Above the space $L^{2+}$ means that there exists a small $\delta >0$ such that we are in the space $L^{2+\delta}(\threed)$.  
Furthermore we use the notation 
$\la x \ra \eqdef \sqrt{ 1+ |x|^2 }$.  Also the space $X$ is defined by $X\eqdef L^1\cap L^{2+}(\threed)$.
\begin{remark} Due to instantaneous smoothing for parabolic equations, one may expect to strengthen the above result to the effect that $u\in C^\infty(\R_{+}\times \threed)$. 
\end{remark}

The reason for the upper bound $\alpha<\frac{1}{2}$ comes from the interplay of the  local well-posedness
 we can establish for \eqref{eqn:model}, and global a priori bounds. In effect, we shall show that this problem is strongly locally well-posed for data of the form of the theorem. Furthermore, the equation immediately implies a priori bounds for the norms $\| u(t, \cdot)\|_{L^q(\threed)}$ for $1\leq q\leq 2+\delta$ for a small $\delta = \delta(\alpha)>0$. The quasilinear character imposes the added difficulty of establishing the non-degeneration of the operator $\big\{(-\triangle)^{-1}u\big\}\triangle$, which we ensure by exploiting the additional symmetries/monotonicity properties of the data. Note that the method employed in this paper suggests the threshold of $\alpha\leq \frac{2}{3}$ which corresponds to conservation of  $L^{\frac{3}{2}}$-norm. More precisely, we are able to show that under the restriction $\alpha<\frac{2}{3}$ we can deduce a priori bounds on the $L^{2+}$-norm on finite tine intervals. This allows us to strengthen the preceding theorem by exploiting a more subtle a priori bound to get

\begin{theorem}\label{thm:MainImprov} Let $0\leq\alpha<\frac{2}{3}$, and $u_0$ be as in Theorem~\ref{thm:Main}. Then there exists a global solution in the same spaces as in Theorem~\ref{thm:Main}; this solution  further satisfies
\[
\lim_{t\to\infty}\|u(t, \cdot)\|_{L^q(\threed)}=0,\quad q\in (1,3/2].
\]
\end{theorem}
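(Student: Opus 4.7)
The plan is to sharpen Theorem~\ref{thm:Main} by inserting a critical a priori estimate that exploits the threshold $\alpha=2/3$. The cornerstone is the $L^q$-energy identity, obtained by multiplying \eqref{eqn:model} by $u^{q-1}$, integrating by parts, and using $\triangle(-\triangle)^{-1}u=-u$:
\begin{equation*}
\frac{d}{dt}\|u(t)\|_{L^q(\threed)}^q \;=\; (\alpha q-1)\,\|u(t)\|_{L^{q+1}(\threed)}^{q+1} \;-\; q(q-1)\!\int_{\threed}\! \{(-\triangle)^{-1}u\}\,u^{q-2}|\nabla u|^2\,dx.
\end{equation*}
The dissipation is non-negative since $(-\triangle)^{-1}u\ge 0$. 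At $q=3/2$ the source coefficient $\tfrac{3\alpha}{2}-1$ is $\le 0$ precisely when $\alpha\le 2/3$, so for $\alpha<2/3$ the $L^{3/2}$-norm is monotonically non-increasing in $t$ and the dissipation is time-integrable. Combined with the $L^1$-identity from the introduction, this yields uniform-in-time control of $\|u(t)\|_{L^1\cap L^{3/2}(\threed)}$.

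The main new ingredient is to use this to derive a finite-time a priori bound on $\|u(t)\|_{L^{2+\delta}}$, valid for $\alpha\in[\tfrac12,\tfrac23)$. Writing the identity at $q=2+\delta$, the potentially positive source coefficient $(2+\delta)\alpha-1$ must be absorbed by the dissipation. For radial, non-increasing, non-negative $u$, Newton's shell theorem supplies the explicit pointwise lower bound $(-\triangle)^{-1}u(x)\ge \frac{1}{4\pi|x|}\int_{|y|\le|x|} u(y)\,dy$, so on any ball $B_R$ (as long as the $L^1$-mass has not dropped to zero) one has $(-\triangle)^{-1}u\ge c_R>0$. Combined with the Sobolev embedding $\dot H^1(\threed)\hookrightarrow L^6(\threed)$, this turns the interior dissipation into a coercive control of $\|u\|_{L^{3(2+\delta)}(B_R)}^{2+\delta}$; the exterior $|x|>R$ is handled using the radial monotone decay $u(x)\lesssim |x|^{-2}\|u\|_{L^{3/2}}$ together with the uniform $L^{3/2}$-bound from step~1. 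Interpolating $\|u\|_{L^{3+\delta}}$ between $L^{3/2}$ and the higher $L^p$ norm from the Sobolev dissipation, then absorbing with Young's inequality, reduces the $L^{2+\delta}$-identity to a Gronwall-type differential inequality for $Y(t):=\|u(t)\|_{L^{2+\delta}}^{2+\delta}$ with locally integrable right-hand side.

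With this finite-time $L^{2+\delta}$-bound in hand, iteration of the local well-posedness theory from Theorem~\ref{thm:Main} extends the solution globally in the claimed spaces; propagation of radial monotonicity (needed to repeatedly apply the Newton-shell lower bound and thereby the non-degeneracy of $\{(-\triangle)^{-1}u\}\triangle$) and of the weighted $H^2$-regularity proceeds exactly as in the proof of Theorem~\ref{thm:Main}. For the long-time decay $\|u(t)\|_{L^q}\to 0$ with $q\in(1,3/2]$, I would invoke the $L^q$-energy identity once more: since $\alpha q-1<0$ strictly for $q\in(1,3/2]$, one has $\dot Y_q \le -(1-\alpha q)\|u(t)\|_{L^{q+1}}^{q+1}$. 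Using the interpolation $\|u\|_{L^q}\le\|u\|_{L^1}^{\lambda}\|u\|_{L^{q+1}}^{1-\lambda}$ for the appropriate $\lambda\in(0,1)$ together with the uniform $L^1$-bound converts this to an ordinary differential inequality $\dot Y_q\le -CY_q^\beta$ with $\beta>1$, which forces $Y_q(t)\to 0$.

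The principal obstacle I anticipate is step~2: establishing the finite-time $L^{2+\delta}$-bound when $\alpha\in[\tfrac12,\tfrac23)$. Since the dissipation coefficient $(-\triangle)^{-1}u$ is position-dependent and degenerates like $|x|^{-1}$ as $|x|\to\infty$, closing the Gronwall inequality requires a careful spatial partition between interior (where Sobolev coercivity applies) and exterior (where radial monotone decay and the $L^{3/2}$-control take over), along with a precise tracking of constants so that the uniform $L^{3/2}$-smallness can genuinely absorb the positive coefficient $(2+\delta)\alpha-1$.
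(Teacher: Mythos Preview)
Your overall plan matches the paper's: obtain a finite-time a priori bound on $\|u(t)\|_{L^{2+\delta}}$ by absorbing the positive source $(\alpha(2+\delta)-1)\int u^{3+\delta}$ into the weighted dissipation $(1+\delta)\int(-\triangle)^{-1}u\,u^\delta|\nabla u|^2$, then iterate the local theory. Your absorption mechanism, however, is genuinely different. You combine the pointwise lower bound $(-\triangle)^{-1}u\ge c_R$ on $B_R$ with the Sobolev embedding $\dot H^1\hookrightarrow L^6$ applied to $u^{1+\delta/2}$, thereby controlling $\|u\|_{L^{3(2+\delta)}(B_R)}^{2+\delta}$, and then interpolate $\|u\|_{L^{3+\delta}}$ between $L^{3/2}$ and $L^{3(2+\delta)}$. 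The paper instead rewrites $\int u^{3+\delta}=-\int u^{2+\delta}\triangle(-\triangle)^{-1}u$, integrates by parts to bring out $\partial_r u$ explicitly, and then uses Hardy's inequality together with a splitting $r\lesssim\kappa$ versus $r\gtrsim\kappa$; the smallness needed to absorb comes from $\frac{1}{r}\int_0^r u(\rho)\rho^2\,d\rho\lesssim\kappa^\nu$ near $r=0$ (via the a priori $L^{3/2+\gamma}$ bound) and from a free Cauchy parameter away from the origin. The paper's route stays close to the structure of the nonlocal coefficient; yours is more in the spirit of standard semilinear parabolic estimates.

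There is a real gap in your Step~2 as written. The interpolation of $L^{3+\delta}$ between $L^{3/2}$ and $L^{3(2+\delta)}$ is \emph{exactly critical}: writing $\tfrac{1}{3+\delta}=\tfrac{\theta}{3/2}+\tfrac{1-\theta}{3(2+\delta)}$ one checks that $(1-\theta)(3+\delta)=2+\delta$, so Young's inequality gives no $\varepsilon$-room and you cannot absorb into the dissipation unless $\|u\|_{L^{3/2}}$ happens to be small, which it is not (it is fixed by the data). The repair is immediate and already implicit in the hypothesis $\alpha<\tfrac23$: the same $L^q$-identity shows $\|u(t)\|_{L^{3/2+\gamma}}$ is non-increasing for some small $\gamma>0$, and interpolating against $L^{3/2+\gamma}$ instead of $L^{3/2}$ is strictly subcritical, after which Young does close. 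The paper's argument likewise rests on $L^{3/2+\gamma}$, not $L^{3/2}$. A smaller point: the bound $(-\triangle)^{-1}u\ge c_R$ on $B_R$ cannot come from the first Newton term alone, since $\tfrac{1}{4\pi|x|}\int_{|y|\le|x|}u\to 0$ as $|x|\to 0$; you need the exterior-shell contribution as in \eqref{eqn: conttrick} and Section~\ref{sec:ELLcontrol}.

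Your decay argument via the autonomous differential inequality $\dot Y_q\le -C\,Y_q^{q/(q-1)}$ (obtained from $\|u\|_{L^q}\le\|u\|_{L^1}^\lambda\|u\|_{L^{q+1}}^{1-\lambda}$ and the uniform $L^1$ bound) is correct and cleaner than the paper's, which extracts a subsequence $t_n\to\infty$ with $\|u(t_n)\|_{L^2}\to 0$ from the time-integrability of $\|u\|_{L^2}^2$ and then invokes monotonicity of $\|u\|_{L^q}$ for $q\le\tfrac32$. Your route additionally yields an explicit rate $\|u(t)\|_{L^q}\lesssim t^{-(q-1)/q}$.
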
  

We do not think it unreasonable to conjecture actual strong local well-posedness for data in $L^{\frac{3}{2}+}$. This appears as a natural limit for the well-posedness of \eqref{eqn:model} in light of the optimal local well-posedness\footnote{This equation is strongly locally well-posed in $L^{\frac{3}{2}+}$,  \cite{MR1658660}.} for 
\[
\partial_t u = \triangle u + u^2,
\]
established in \cite{MR1658660}. The preceding theorem then appears as a natural endpoint to the method employed in this paper, although it may of course still be possible to deduce a priori bounds that allow one to get above $\alpha<\frac{2}{3}$ for global existence.
\\

Indeed, it  appears that the case  $\alpha = 1$ is the natural threshold for global well-posedness.  It further seems reasonable to conjecture that increasing $\alpha$  beyond $\alpha>1$, one should get finite time blow up solutions. 
We  are unable to show this, but we do have the following simple example:

\begin{prop}\label{prop: boundedblowup} Consider \eqref{eqn:model} but on the ball $B_1(0)\eqdef \{x\in \R^3 ~ | ~ |x|\leq 1\}$, choosing $(-\triangle)^{-1}u$ to have vanishing values on $\{|x| = 1\}$, $\alpha>1$. Then nontrivial non-negative smooth global solutions of \eqref{eqn:model} vanishing on $\partial B_1(0)$ cannot exist. 
\end{prop}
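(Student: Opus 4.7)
\textbf{Proof plan for Proposition \ref{prop: boundedblowup}.} The plan is to test the equation against the constant function $1$ and use the structure of the bilateral zero boundary conditions to convert the diffusion term into a dissipative contribution of exactly the form $-\int u^2$. Specifically, let $V \eqdef (-\triangle)^{-1}u$ with $V|_{\partial B_1(0)}=0$, so that $-\triangle V = u$ in $B_1(0)$. Integrating \eqref{eqn:model} over $B_1(0)$ and writing
\[
\int_{B_1(0)} V\,\triangle u\, dx,
\]
I would integrate by parts twice: since $V=0$ on $\partial B_1(0)$ the first boundary term vanishes, and since $u=0$ on $\partial B_1(0)$ the second boundary term also vanishes, giving
\[
\int_{B_1(0)} V\,\triangle u\, dx \;=\; \int_{B_1(0)} u\,\triangle V\, dx \;=\; -\int_{B_1(0)} u^2\, dx.
\]
Setting $M(t) \eqdef \int_{B_1(0)} u(t,x)\, dx$, I would thus obtain the exact identity
\[
M'(t) \;=\; (\alpha-1)\int_{B_1(0)} u^2(t,x)\, dx.
\]

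Next, since $u\geq 0$ and the domain is bounded, Cauchy--Schwarz yields $\int u^2 \geq M(t)^2/|B_1(0)|$. Combined with $\alpha > 1$ this gives the Riccati-type differential inequality
\[
M'(t) \;\geq\; \frac{\alpha-1}{|B_1(0)|}\, M(t)^2,
\]
which for any positive initial datum $M(0)=M_0>0$ forces
\[
M(t) \;\geq\; \frac{M_0}{1 - \tfrac{(\alpha-1)M_0}{|B_1(0)|}\, t}
\]
and hence $M(t)\to\infty$ as $t\uparrow t^*\eqdef |B_1(0)|/\bigl((\alpha-1)M_0\bigr)$. Since a nontrivial non-negative smooth solution would satisfy $M_0>0$ and $M(t)\leq |B_1(0)|\,\|u(t,\cdot)\|_{L^\infty}<\infty$ for all $t$, this contradicts global smooth existence.

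I do not anticipate a serious obstacle: the key structural observation is that the combination of homogeneous Dirichlet conditions on both $u$ and $V=(-\triangle)^{-1}u$ makes the quasilinear diffusion term behave, at the level of the mass functional, \emph{exactly} like a dissipation of strength one, so the sign of $\alpha-1$ alone determines the fate of $M(t)$. The only minor point to verify is that a non-negative initial datum remains non-negative under the flow, which is immediate from a standard parabolic maximum principle applied to the linear equation $\pr_t u - V\,\triangle u - \alpha u\cdot u = 0$ with coefficient $V\geq 0$ by the maximum principle applied to $V$ itself with non-negative source $u$.
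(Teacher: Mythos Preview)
Your proposal is correct and follows essentially the same approach as the paper: integrate the equation over $B_1(0)$, use integration by parts with the Dirichlet boundary conditions to obtain the exact identity $M'(t)=(\alpha-1)\int u^2$, and then apply H\"older/Cauchy--Schwarz on the bounded domain to derive a Riccati inequality forcing finite-time blow-up of the mass. You have in fact spelled out more carefully than the paper which boundary condition kills which boundary term in the two integrations by parts; the only superfluous part is your final remark on propagation of non-negativity, since that is already part of the hypothesis.
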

\begin{proof} Let $u(t, x)\geq 0$ be such a solution, $t\geq 0$. Then using integration by parts:
\[
\frac{d}{dt}\int_{B_1(0)}u(t, x)dx = (\alpha-1)\int_{B_1(0)}u^2(t, x)dx\gtrsim \left(\int_{B_1(0)}u(t, x)dx\right)^2, 
\]
using the H{\"o}lder inequality in the last step. But then we infer 
\[
\lim_{t\to T}\int_{B_1(0)}u(t, x)dx = \infty,
\]
 for some $T>0$ since $\int_{B_1(0)}u_0 dx >0$. 
\end{proof}

The difficulty in extending this reasoning to the context of $\threed$ is that the $L^1$-mass could spread out to spatial infinity `too quickly'. 

We will use the notation $A \lesssim B$ to mean that there exists an inessential uniform constant $C>0$ such that $A \le C B$. In general $C$ will denote an inessential uniform constant whose value may change from line to line.  Furthermore,
$A \gtrsim B$ means $B \lesssim A$, and $A \approx B$ is defined as $A \lesssim B \lesssim A$.

In the next section we discuss the local existence theory.  Then in Section \ref{sec:global} we extend this local existence theory globally in time and prove the decay rates as $t \to \infty$; both of these make use of a monotonicity formula. More precisely, we shall show that the expressions $\|u(t, \cdot)\|_{L_x^q}$ are non-increasing or at least locally bounded in time for a suitable range of $q$. The local existence theory then gives the global existence. 

\section{Local existence theory}\label{sec:local}

Our main result in this section is the following local existence theorem:

\begin{prop}\label{prop:local} Consider \eqref{eqn:model} with $\alpha \ge 0$, and let $u_0$, $\tilde{u}_0$ be as in Theorem~\ref{thm:Main}. Pick $1>r_0>0$ such that 
\begin{equation}
\int_{r_0^{-1}>|x|>r_0}u_0(x)dx >0. 
\label{initialNON}
\end{equation}
Then there exists 
\[
T = T\left(\|u_0\|_{L^1\cap L^{2+}(\threed)}+\|\triangle \tilde{u}_0\|_{L^2(\threed)}, ~r_0, ~\int_{r_0^{-1}>|x|>r_0}u_0(x)dx\right)>0,
\]
and a unique solution $u(t, x)$ on $[0, T)\times\threed$ satisfying the following properties\footnote{By this we mean that there exists a unique solution in the class of solutions satisfying all these requirements.}: 
 $u(t, x)$ is radial, non-increasing, and positive.  Furthermore
 $$
 u\in C^{0}([0, T), L^1\cap L^{2+}(\threed)), 
 \quad \la x \ra^{\frac{1}{2}}\triangle u\in L^2(\threed).
 $$
 Finally we have the pointwise bound 
\begin{equation}
 D_1>(-\triangle)^{-1}u(t, x)>\frac{D_2}{\la x \ra},\quad D_{1},   D_{2}>0.
 \label{boundUinv}
\end{equation}
This holds uniformly on $[0, T)\times\threed$.
\end{prop}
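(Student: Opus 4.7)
The plan is to construct the local solution by a Picard/Banach fixed-point argument for the quasilinear structure. For a candidate $w$ in a suitable closed set of radial, positive, non-increasing functions satisfying the non-degeneracy $(-\triangle)^{-1}w(t,x) \gtrsim 1/\langle x \rangle$ on $[0,T]$, let $u = \Phi(w)$ solve the \emph{linear} parabolic problem
\begin{equation*}
\partial_t u = \{(-\triangle)^{-1}w\} \triangle u + \alpha w u, \quad u(0)=u_0.
\end{equation*}
The coefficient $(-\triangle)^{-1}w$ is smooth and, by the hypothesized non-degeneracy, positive and bounded, so standard linear parabolic theory provides a unique solution $u$. I would then show that $\Phi$ stabilizes a closed convex set $\mathcal{K}_T$ incorporating the $X=L^1\cap L^{2+}$ norm, the weighted second-order norm $\|\langle x\rangle^{1/2}\triangle u\|_{L^2}$, positivity, radial monotonicity, and the two-sided bound on $(-\triangle)^{-1}u$, and that $\Phi$ is a contraction in a weaker topology (e.g.\ $C([0,T],L^2)$) for $T$ small.

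The pointwise lower bound \eqref{boundUinv} at $t=0$ follows directly from \eqref{initialNON}: for $|y|\leq r_0^{-1}$ one has $|x-y|\leq|x|+r_0^{-1}\leq(1+r_0^{-1})\langle x\rangle$, whence
\begin{equation*}
(-\triangle)^{-1}u_0(x) \geq \frac{1}{4\pi(1+r_0^{-1})\langle x\rangle}\int_{r_0\leq |y|\leq r_0^{-1}} u_0(y)\,dy \geq \frac{D_2}{\langle x\rangle},
\end{equation*}
while the uniform upper bound $D_1$ follows from splitting the convolution at $|x-y|=1$ and applying H\"older with $u_0\in L^1\cap L^{2+}$. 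Radial symmetry of $u=\Phi(w)$ is immediate from uniqueness. Positivity follows from the parabolic maximum principle (bounded zero-order term $\alpha w$, coefficient $(-\triangle)^{-1}w>0$). Monotonicity in $r=|x|$ should follow by applying the radial maximum principle to $\partial_r u$, using that $\partial_r(-\triangle)^{-1}w\leq 0$ whenever $w$ is radial non-increasing, plus the favorable sign of lower-order terms after commuting $\partial_r$ through the radial Laplacian.

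For the a priori bounds that populate $\mathcal{K}_T$: multiplying the linear equation by $u^{p-1}$ with $1\leq p\leq 2+\delta$ and integrating by parts against the divergence structure (or, equivalently, using the formal conservation law stated in the introduction at the linearized level) yields control of $\|u(t,\cdot)\|_{L^p}$ on short time intervals. For the top-order norm, I would commute $\triangle$ through the equation, multiply by $\langle x\rangle \triangle u$, and integrate to obtain an inequality of the form
\begin{equation*}
\frac{d}{dt}\|\langle x\rangle^{1/2}\triangle u\|_{L^2}^2 \leq C\bigl(1+\|\langle x\rangle^{1/2}\triangle u\|_{L^2}^2\bigr)^N,
\end{equation*}
closing by Gronwall on $[0,T]$ for $T$ depending on the data through the quantities listed in the statement. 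The commutator terms and the $\alpha u^2$ piece are controlled by Sobolev embedding in three dimensions combined with the weight.

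The main obstacle is ensuring that the non-degeneracy \eqref{boundUinv} is propagated to the iterate $u=\Phi(w)$, not merely to $w$; without this one loses the uniform ellipticity. For this I would use the equation to estimate
\begin{equation*}
\bigl|(-\triangle)^{-1}u(t,x) - (-\triangle)^{-1}u_0(x)\bigr| \leq \int_0^t \bigl|(-\triangle)^{-1}\partial_s u(s,x)\bigr|\,ds \lesssim \frac{t\cdot \Lambda(\|u\|_{\mathcal{K}_T})}{\langle x\rangle},
\end{equation*}
where the last inequality uses that the right-hand side of the equation, after applying $(-\triangle)^{-1}$, decays like $1/\langle x\rangle$ for radial monotone positive data with controlled mass and second-order norm. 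Choosing $T$ small in terms of $D_2$ (and of $\Lambda$) guarantees that the lower bound is preserved, and similarly for the upper bound. Contraction of $\Phi$ in $C([0,T],L^2)$ via a direct energy estimate on the difference of two iterates closes the argument, and uniqueness in the stated class follows from an analogous estimate on the difference of two solutions.
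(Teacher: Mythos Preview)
Your architecture---iterate on a linearized problem, propagate positivity and radial monotonicity by maximum-principle arguments, close with energy estimates and a contraction in a weak norm---is the same as the paper's in outline. The substantive gap is the sentence ``standard linear parabolic theory provides a unique solution $u$.'' The coefficient $(-\triangle)^{-1}w$ is \emph{not} uniformly elliptic on $\mathbb{R}^3$: by the very non-degeneracy you build into $\mathcal{K}_T$ it behaves like $\langle x\rangle^{-1}$ at infinity, so neither classical Schauder/$L^p$ theory nor the usual analytic-semigroup framework for uniformly elliptic generators applies off the shelf. This is not a technicality you can wave away; it is where essentially all of the paper's work goes. The paper handles it by conjugating: with $g=[(-\triangle)^{-1}w]^{1/2}$ and $\tilde u=e^{-t}g^{-1}u$, the problem is governed by the \emph{self-adjoint} operator $A(t)\tilde u=g(-\triangle)(g\tilde u)+\tilde u$ on $L^2$, for which $\langle A(t)\tilde u,\tilde u\rangle\ge\|\tilde u\|_{L^2}^2$ immediately gives the resolvent bound. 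Existence of a fundamental solution $U(t,\tau)$ then follows from Friedman's abstract theory, \emph{provided} one verifies the H\"older condition $\|[A(t)-A(\tau)]A(\tau)^{-1}\|\lesssim|t-\tau|^\gamma$. That verification is the technical heart (roughly half the proof) and relies in a detailed way on Newton's formula for radial potentials, the monotonicity of the previous iterate, and a Littlewood--Paley decomposition; it is not a routine computation.

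Once $U(t,\tau)$ is available, the paper obtains the a priori bounds by Duhamel in the norm $\|\tilde u\|_Z=\sup_{t}\bigl(\|\tilde u\|_{L^2}+t^{1/2}\|\langle x\rangle^{-1/2}\nabla\tilde u\|_{L^2}\bigr)$ rather than by your direct weighted energy estimate on $\|\langle x\rangle^{1/2}\triangle u\|_{L^2}$; your sketch of the latter glosses over the commutator terms, which in this degenerate setting require exactly the kind of careful weight-balancing (Hardy, interpolation, the pointwise decay $|\nabla(-\triangle)^{-1}w|\lesssim\min\{|x|^{-1/2},|x|^{-2}\}$) that the paper carries out via the parametrix instead. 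Your treatments of positivity, monotonicity (via $r\partial_r$ and the maximum principle), and propagation of the lower bound in \eqref{boundUinv} are essentially what the paper does, though for the last point the paper tracks $f(t)=\int\chi(y)u(t,y)\,dy$ for a cutoff $\chi$ supported in the annulus and derives an ODE inequality $f'\ge -Cf$, which is cleaner than estimating $(-\triangle)^{-1}\partial_s u$ directly. In short: the skeleton is right, but you need to replace ``standard linear parabolic theory'' with an actual construction adapted to the degeneracy at infinity, and the paper's conjugation-plus-Friedman route is one concrete way to do it.
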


\subsection{Some useful facts}
We recall the Newton formula 
for radial functions (see the book of Lieb and Loss \cite[Theorem 9.7]{MR1817225}):
\begin{equation}\label{NewtonFormula3D}
\begin{split}
(-\triangle)^{-1}u(x) 
& = \frac{1}{4\pi |x|}\int_{|y|\leq |x|}u(y)dy + \int_{|y|\geq |x|}\frac{u(y)}{4\pi |y|}dy,
\\
& = \frac{1}{3 |x|}\int_{0}^{|x|} u(\rho) \rho^{2} d\rho + \frac{1}{3 } \int_{|x|}^\infty u(\rho)  \rho d\rho.
\end{split}\end{equation}
We claim that \eqref{NewtonFormula3D} combined with\footnote{We use the notation $\|u\|_{L_t^p L_x^q}: = \big\|\|u(t,\cdot)\|_{L_x^q}\big\|_{L_t^p}$.} $u\in L^\infty_t L^1_x$ implies
$$
(-\triangle)^{-1}u(t,x)\leq \frac{\tilde{D}_1}{\la x \ra},
\quad 
\tilde{D}_1 >0.
$$
This follows easily by splitting into the separate regions $|x| \ge 1$ and $|x| \le 1$.  On the former region we use 
Newton's formula \eqref{NewtonFormula3D}
and on the latter region we use 
the upper bound in \eqref{boundUinv}.  Combining this with \eqref{boundUinv}, we conclude that we have uniformly on $[0,T)\times \threed$ 
\begin{equation}
(-\triangle)^{-1}u(t,x)  \approx  \la x \ra^{-1}.
\label{asymptoticU}
\end{equation}
This estimate will be used several times below. 

We shall also use {\it{Littlewood-Paley frequency cutoffs}} $P_{<a}$, $P_b$, where $a$, $b$ are integers. To define these, one uses a suitable radial smooth bump function $m(\xi)\in C_c^\infty(\R^3)$ with $\sum_{k\in\Z}m(2^k\xi) = 1$, $\xi\in \R^3\backslash \{0\}$, and then puts 
\[
\widehat{P_k f}(\xi) = m(\frac{\xi}{2^k})\hat{f}(\xi),\,\widehat{P_{<a} f}(\xi) = \sum_{k<a}m(\frac{\xi}{2^k})\hat{f}(\xi)
\]
and analogously for $P_{>a}$. Then one has the useful {\it{Bernstein's inequality}}
\[
\|P_{<l}f\|_{L^q(\R^3)}\lesssim 2^{\frac{3l}{p} - \frac{3l}{q}}\|P_{<l}f\|_{L^p(\R^3)},\quad1\leq p\leq q\leq \infty.
\]

\subsection{The iteration scheme}
We prove Proposition \ref{prop:local} by constructing a local solution by means of an iteration scheme. Specifically we set
$$
u^{(0)}(t, x) \eqdef e^{t\triangle}u_0(x), \quad t\in [0, T), 
$$
and then we define implicitly 
\begin{equation}\label{eqn:modelj}
\begin{split}
\partial_t u^{(j)}(t, x) &=  (-\triangle)^{-1}(u^{(j-1)})\triangle u^{(j)} + \alpha \big(u^{(j-1)}\big)^2,\quad 
j\in\{ 1,2,\ldots\},
\\
u^{(j)}(0, x) &= u_0(x).
\end{split}
\end{equation}
Our goal will be to establish the uniform estimates in the following lemma: 

\begin{lemma}\label{lem:bounds}(Key Lemma)
There exists $T>0$ as well as $D_{i}>0$ $(i=1,2,3)$, all depending on $r_0$, \eqref{initialNON} and 
$ \|u_0\|_{L^1\cap L^{2+}(\threed)}$ such that we have the following uniform bound  $\forall j\geq 0$: 
\begin{equation}\label{lem:bound1}
\|u^{(j)}\|_{L_t^{\infty}\left([0,T);L^1\cap L^{2+}(\threed) \right)}+\sup_{t\in (0, T]}t^{\frac{1}{2}}\|\chi_{|x|\lesssim 1}u^{(j)}\|_{L^6(\threed)}<D_3,
\end{equation}
where $\chi_{|x|\lesssim 1}$ smoothly truncates to the indicated region $(|x|\lesssim 1)$, and such that 
\eqref{boundUinv} and \eqref{asymptoticU} hold for $u = u^{(j)}$ $\forall j\geq 0$ uniformly.  
Moreover all the $u^{(j)}(t, \cdot)$ are non-increasing, positive, radial and we obtain the uniform derivative bounds 
\begin{equation}\label{lem:bound2}
\|\la x \ra^{\frac{1}{2}}\nabla^\alpha u^{(j)}(t,\cdot)\|_{L^2(\threed)}\leq D_4,\quad 0\leq |\alpha|\leq 2,
\end{equation}
where $D_4$ depends on the same quantities as $D_{i}$ $(i=1,2,3)$ and it additionally depends linearly on $\|\triangle \tilde{u}_0\|_{L^2}$. 
\end{lemma}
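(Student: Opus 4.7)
The plan is to proceed by induction on $j$. For $j=0$, standard properties of the heat semigroup give the base case: $u^{(0)}=e^{t\Delta}u_0$ inherits positivity, radiality, and the monotone non-increasing property from the Gaussian kernel; $L^p$-contraction gives the $L^1\cap L^{2+}$ estimate; heat smoothing together with Bernstein's inequality yields the $L^6$ bound with $t^{1/2}$ weight; standard parabolic regularity in the weighted space $\langle x\rangle^{1/2}L^2$ gives the derivative bounds \eqref{lem:bound2}; and the lower bound \eqref{boundUinv} follows from \eqref{initialNON} via continuity of $\int_{r_0<|x|<r_0^{-1}}u^{(0)}(t,x)\,dx$ combined with the Newton formula \eqref{NewtonFormula3D}, after shrinking $T$.

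For the inductive step, assume the conclusions hold for $u^{(j-1)}$. The equation \eqref{eqn:modelj} is then a linear parabolic equation for $u^{(j)}$ with variable coefficient $A_{j-1}\eqdef(-\Delta)^{-1}u^{(j-1)}$ satisfying $A_{j-1}\approx\langle x\rangle^{-1}$ by \eqref{asymptoticU}, and smooth non-negative source $\alpha(u^{(j-1)})^2$. Standard linear parabolic theory, carried out in a weighted Sobolev space adapted to the degeneracy of $A_{j-1}$ at infinity, provides a unique solution $u^{(j)}$ of the required regularity. Positivity follows from the maximum principle since $u_0\ge 0$ and the source is non-negative; radiality is a consequence of uniqueness and the radial symmetry of the equation and data. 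For monotonicity I differentiate the radial form of \eqref{eqn:modelj} in $r=|x|$, obtaining an equation for $\partial_r u^{(j)}$ whose source contains the term $2\alpha u^{(j-1)}\partial_r u^{(j-1)}\le 0$ by the inductive hypothesis, and whose transport structure coming from the radial Laplacian preserves the sign of $\partial_r u^{(j)}$; a maximum-principle argument then yields $\partial_r u^{(j)}(t,\cdot)\le 0$. The lower bound \eqref{boundUinv} for $u^{(j)}$ is then established as in the base case: monotonicity reduces it to a lower bound on the annular mass $\int_{r_0<|x|<r_0^{-1}}u^{(j)}(t,x)\,dx$, which is continuous in $t$ and equals the positive quantity \eqref{initialNON} at $t=0$, so shrinking $T$ uniformly in $j$ suffices.

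The $L^{2+\delta}$ bound is obtained by multiplying \eqref{eqn:modelj} by $(u^{(j)})^{1+\delta}$, integrating by parts twice, and using the key identity $-\Delta A_{j-1}=u^{(j-1)}\ge 0$ to get
\begin{align*}
\frac{1}{2+\delta}\frac{d}{dt}\|u^{(j)}\|_{L^{2+\delta}}^{2+\delta}
&=-(1+\delta)\!\int\! A_{j-1}(u^{(j)})^{\delta}|\nabla u^{(j)}|^2\,dx \\
&\quad -\frac{1}{2+\delta}\!\int\! u^{(j-1)}(u^{(j)})^{2+\delta}\,dx+\alpha\!\int\!(u^{(j-1)})^2(u^{(j)})^{1+\delta}\,dx.
\end{align*}
The first two terms are non-positive; the last is bounded via H\"older and the inductive $L^1\cap L^{2+\delta}$ control, supplemented by the $L^\infty$ estimate from the inductive $H^2$ bound and Sobolev embedding in $\R^3$, which closes the Gr\"onwall argument on a uniform interval. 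The $L^1$ bound follows from an analogous direct integration of \eqref{eqn:modelj}, producing a non-positive term $-\int u^{(j-1)}u^{(j)}\,dx$ and a forcing $\alpha\int(u^{(j-1)})^2\,dx$ bounded by the inductive $L^2$ control. The $L^6$ smoothing at rate $t^{-1/2}$ and the weighted derivative bounds \eqref{lem:bound2} follow from Duhamel's formula for the heat semigroup combined with Bernstein's inequality, together with weighted $H^2$ energy estimates in which the weight $\langle x\rangle^{1/2}$ is handled using \eqref{asymptoticU} and the corresponding inductive pointwise bounds on $\nabla A_{j-1}$ and $\Delta A_{j-1}=-u^{(j-1)}$.

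The main obstacle is the propagation of monotonicity through the iteration: it is the only genuinely nonlinear constraint in the induction, and it is precisely what ensures via \eqref{initialNON} that the coefficient $A_{j-1}$ stays bounded below by $D_2/\langle x\rangle$ and hence the parabolic operator remains non-degenerate near the origin. A secondary difficulty is the uniform control of the $L^{2+\delta}$ energy against the borderline quadratic source, which requires simultaneous use of the $L^\infty$ embedding from the $H^2$-level weighted derivative bound.
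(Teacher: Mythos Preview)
Your outline has the right induction skeleton and several pieces (the $L^1$ and $L^{2+\delta}$ energy identities, the monotonicity via the radial derivative and a maximum principle, the lower bound via an annular mass) match the paper in spirit. But there is a genuine gap at the heart of the inductive step: you invoke ``standard linear parabolic theory'' to solve \eqref{eqn:modelj} and then ``Duhamel's formula for the heat semigroup combined with Bernstein's inequality'' to get the $t^{-1/2}$-$L^6$ smoothing and the weighted $H^2$ bounds. Neither is available here. The linear operator is $A_{j-1}(t)\Delta$ with a time-dependent coefficient that degenerates like $\langle x\rangle^{-1}$ at infinity; it is not the heat operator, it is not self-adjoint in $L^2$, and no off-the-shelf semigroup with the required smoothing is on the table. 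This is exactly the technical core that the paper spends most of its effort on.

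What the paper actually does is conjugate: set $g_j=[(-\Delta)^{-1}u^{(j-1)}]^{1/2}$ and $\tilde u^{(j)}=e^{-t}g_j^{-1}u^{(j)}$, so that the principal part becomes the \emph{self-adjoint} operator $A(t)u=g_j(-\Delta)(g_ju)+u$. Self-adjointness immediately gives the resolvent bound needed for an analytic semigroup; the remaining (and lengthy) work is verifying the H\"older-in-time condition $\|[A(t)-A(\tau)]A^{-1}(\tau)\|\lesssim|t-\tau|^\gamma$ required by Friedman's theory to build a fundamental solution $U(t,\tau)$. The $t^{-1/2}$ smoothing and the weighted derivative bounds are then read off from $\|A(t)U(t,0)\|\lesssim t^{-1}$ and an interpolation-type argument, applied to the Duhamel representation for $\tilde u^{(j)}$ with source $-\tfrac{\partial_t g_j}{g_j}\tilde u^{(j)}+\alpha e^{t}\tfrac{g_{j-1}^2}{g_j}(\tilde u^{(j-1)})^2$. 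Without this construction you have no mechanism producing the $L^6$ smoothing or \eqref{lem:bound2}; Bernstein for $e^{t\Delta}$ is simply the wrong propagator. Your $L^{2+\delta}$ argument also leans on the inductive $H^2\hookrightarrow L^\infty$ bound to control the source $(u^{(j-1)})^2(u^{(j)})^{1+\delta}$, so the missing derivative estimate propagates into that step as well; the paper instead uses the already-established $t^{1/2}L^6$ control (via interpolation to $L^{3+\delta}$) to close it.
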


The proof of Lemma \ref{lem:bounds} is the core of the paper and extends up to Section \ref{sec:higherDb}.  We proceed by induction on $j$. In the case $j=0$, the bounds 
\[
\|e^{t\triangle}u_0\|_{L_t^{\infty}\left([0,T);L^1\cap L^{2+}(\threed) \right)}\leq \|u_0\|_{L^1\cap L^{2+}(\threed)},
\]
follow from the explicit form of the heat kernel.  
Further the bound 
\[
\|e^{t\triangle}u_0\|_{L^{6}(\threed)}\lesssim t^{-\frac{1}{2}}\|u_0\|_{L^2(\threed)},
\]
follows from the Sobolev embedding after applying $\nabla$. Also, clearly $u^{(0)}$ will be radial and positive throughout, as well as non-increasing.  Furthermore the formula \eqref{NewtonFormula3D} combined with a simple continuity argument as well as the H{\"o}lder inequality allow us to conclude that \eqref{boundUinv} holds for 
$
u^{(0)}(t, x),
$
where the constants $D_{i}$ depend upon $\|u_0\|_{L^1\cap L^{2+}(\threed)}$,  $r_0$ and $\int_{r_0^{-1}>|y|>r_0}u_0(y)dy>0$
for 
 $i=1,2$. 
 
 Indeed, to obtain the lower bound, we use 
 \begin{multline}\label{eqn: conttrick}
 \frac{1}{4\pi|x|}\int_{|y|\leq |x|}u^{(0)}(t, y)\,dy 
 + 
 \int_{|y|\geq|x|}\frac{u^{(0)}(t, y)}{4\pi|y|}\,dy
 \\
 \geq 
 \frac{r_0}{8\pi\la x\ra}\int_{|y|<2r_0^{-1}}u^{(0)}(t, y)\,dy.
 \end{multline}
This is clear when $|x|\ge \frac{2}{r_0}$, in which case already the first integral is bounded from below by the right hand side; on the other hand, when $|x|< \frac{2}{r_0}$, we minorize the second integral by $\frac{r_0}{8\pi}\int_{|x|\leq |y|\leq \frac{2}{r_0}}u^{(0)}(t, y)\,dy$ and exploit the fact that $r_0<1$.
 Then if $\chi$ is a non-negative smooth cutoff which equals $1$ on $\{|y|<r_0^{-1}\}$ and has compact support on $|y|<2r_0^{-1}$, we have 
 \[
\big|\frac{d}{dt}\big(\int \chi(y) u^{(0)}(t, y)\,dy\big)\big|\lesssim \int u^{(0)}(t, y)\,dy = \int u_0\,dy,
\]
whence we obtain $\int_{|y|<2r_0^{-1}}u^{(0)}(t, y)\,dy \gtrsim \int_{|y|<r_0^{-1}}u_0\,dy$ provided that 
\[
t
\ll
\frac{\int_{y<r_0^{-1}}u_0\,dy}{\int u_0\,dy}.
\]
 The bound for $\|\la x\ra^{\frac{1}{2}}\nabla^{\alpha}u^{(0)}\|_{L^2}$, $|\alpha|\leq 2$, follows also from the explicit kernel representation for the heat kernel $e^{t\triangle}$. 
\\

The difficult part is establishing these bounds for the higher iterates $u^{(j)}$ with $j\geq 1$. We shall proceed by induction, assuming the properties stated in Lemma \ref{lem:bounds} hold for $j-1$ and deducing them for $j$.  This induction will particularly clarify the nature of $T>0$. 

We shall rely in part on the functional analytic framework developed in Theorem 3.1 and Theorem 3.2 of Part 2 of Friedman \cite{MR0454266}: 
let $A(t)$ be an operator valued function, for $t\in [0, T]$, with $A(t)$ acting on some Banach space $X$ (note that $A(t)$ need not be bounded). We suppose that the domains of $A(t)$ are given by $D_A$ (independent of $t\in [0, T]$).  Further consider the following {\bf{Key properties:}}
\begin{itemize}
\item $D_A$ is dense in $X$, and each $A(t)$ is a closed operator.
\item For each $t\in [0, T]$, the resolvent $R(\lambda; A(t))$ of $A(t)$:
$$
R(\lambda; A(t)) \eqdef \left( A(t) - \lambda I \right)^{-1},
$$ 
exists for all $\lambda$ with $\Re(\lambda)\leq 0$.
\item For each $\Re\lambda\leq 0$ we have the bound in the operator norm
$$
\| R(\lambda; A(t))\|\lesssim \frac{1}{|\lambda| + 1}.
$$
\item For any $t$, $\tau$, $s\in [0, T]$, we have a H{\"o}lder estimate
 for the  $\|\cdot\|_{X}$ operator norm
\begin{equation}
\|[A(t)-A(\tau)]A^{-1}(s)\|\lesssim |t-\tau|^{\gamma}.
\label{holderAest}
\end{equation}
This should hold for some $\gamma\in (0,1) $, with the implicit constant depending on $\gamma$. 
\end{itemize}
The implicit constants above (unless otherwise specified) should all be independent of $\lambda$, $t$, $\tau$, $s$ and $\gamma$.
Then following Friedman \cite[Theorem 3.1 and Theorem 3.2]{MR0454266}, there exists a {\it{unique fundamental solution}} $U(t, \tau)\in B(X)$; that is  a strongly continuous operator valued function such that $\text{Range}(U(t, \tau))\subset D_A$ $\forall t, \tau\in [0, T]$, and furthermore 
\[
\partial_t U(t, \tau)+A(t)U(t, \tau) = 0,\quad \tau<t\leq T,\quad U(\tau, \tau) = I.
\]
In the following we will construct suitable operators $A(t)$, then prove that they have the requisite properties to deduce the existence of the fundamental solution.

It may appear natural to use the operator 
\[
A^{(j-1)}(t)\eqdef -\big((-\triangle)^{-1}u^{(j-1)}\big) \triangle + I,
\]
which however is not self-adjoint.  This causes difficulties in establishing the resolvent bounds. 
Instead, we introduce the slightly modified\footnote{Here we omit the superscript $j$ for simplicity} operator
\begin{equation}
\begin{split}
A(t)(u) \eqdef  g_j (-\triangle)(g_j u) + u,
\\
g_j  \eqdef  \big[(-\triangle)^{-1}u^{(j-1)}\big]^{\frac{1}{2}}.
\end{split}
\label{operatorDEF}
\end{equation}
It is not hard to check that this is a self-adjoint operator with domain 
\[
D_A\eqdef \{\tilde{u}\in L^2(\threed) ~ | ~ A(t)(\tilde{u})\in L^2(\threed)\}.
\]
Then it is easily verified that $D_A$ is independent of $t$, in light of the assumptions on $u^{(j-1)}$. 
Note that $\la A(t)\tilde{u}, \tilde{u}\ra_{L^2(\threed)}\geq \|\tilde{u}\|_{L^2(\threed)}^2$ for $\tilde{u}\in D_A$, whence we have 
\[
\|R(\lambda; A(t))\|\leq \frac{1}{1+|\lambda|},\quad \Re\lambda\leq 0,
\]
i.e. the resolvent bound among the key properties is satisfied.  
In particular, $A(\sigma)$, $\sigma \in [0, T]$ generates an analytic semigroup 
\[
e^{-tA(\sigma)}
\]
with the important bounds 
\[
\|A(\sigma)^m e^{-tA(\sigma)}\|\lesssim \frac{1}{t^m},\quad t>0,\quad m=1,2,\ldots.
\]
In order to use the operators $A(t)$, we need to re-formulate \eqref{eqn:modelj} as follows. Let 
\[
\tilde{u}^{(j)}\eqdef  e^{-t} g_j^{-1} u^{(j)}, 
\quad
j \ge 1.
\]
Then we obtain 
\begin{equation}\label{eqn:tildemodelj}
\partial_t\tilde{u}^{(j)} + A(t)\tilde{u}^{(j)}= 
-\frac{\partial_t g_j}{g_j}\tilde{u}^{(j)}
+
\alpha e^{t} \frac{g_{j-1}^2}{g_j} \big(\tilde{u}^{(j-1)}\big)^2
\end{equation}
We then treat 
$
-\frac{\partial_t g_j}{g_j}\tilde{u}^{(j)}
+
\alpha e^{t} \frac{g_{j-1}^2}{g_j} \big(\tilde{u}^{(j-1)}\big)^2
$ 
as source term, 
and apply a bootstrapping argument to recover the $L^2$-based bounds on $\tilde{u}^{(j)}$. The $L^1, L^{2+}$-bounds in turn will follow directly from \eqref{eqn:modelj}. 
\\

\subsection*{Organization of the rest of Section \ref{sec:local}}   In Section \ref{sec:ce} we prove the continuity estimate in \eqref{holderAest}.  Then in Section \ref{sec:control} we prove the uniform bounds on $u^{(j)}$.  After that in Section \ref{sec:mon} we will establish the monotonicity of each $u^{(j)}$ by induction.  Subsequently in Section \ref{sec:ELLcontrol} we prove the pointwise control over the expression  $(-\triangle)^{-1}u^{(j)}$ as in \eqref{boundUinv}.  In the next Section \ref{sec:higherDb} we prove uniform bounds for the higher derivatives.  Section \ref{sec:convergence} then proves the convergence of the $u^{(j)}$.  Finally Section \ref{sec:unq} proves the uniqueness of the solution $u(t,x)$.  \\

In order to construct the fundamental solution $U(t, \tau)$ associated with $A(\sigma)$, we still need to verify the fourth of the key properties, i.e. the H\"{o}lder type bound.

\subsection{The continuity estimate}\label{sec:ce}
Notice that condition \eqref{holderAest} is implied by
\begin{equation}
\|[A(t)-A(\tau)]A^{-1}(\tau)\|\lesssim |t-\tau|^{\gamma},
\quad t, \tau \in [0, T],
\quad \gamma\in (0,1).
\label{holderAestIMP}
\end{equation}
This simplification is explained in Section 3 of Friedman \cite{MR0454266}. 

Consider the identity
\begin{equation}\notag
\big[A(t) - A(\tau)\big]A^{-1}(\tau)
 = 
 \big[g_j(t)(-\triangle)\big(g_j(t)\cdot\big) - g_j(\tau)(-\triangle)\big(g_j(\tau)\cdot\big)\big]
 \circ 
\Psi
 \circ \Phi,
\end{equation}
where we set 
\begin{gather*}
\Psi \eqdef g_j^{-1}(\tau)(-\triangle)^{-1}\big(g_j^{-1}(\tau)\cdot\big),
\\
\Phi\eqdef g_j(\tau)(-\triangle)\big(g_j(\tau)\cdot\big)\circ \big(g_j(\tau)(-\triangle)\big(g_j(\tau)\cdot\big) + I\big)^{-1}.
\end{gather*}
Thus $\Phi$ is clearly $L^2$-bounded. Then we decompose 
\begin{eqnarray}
\notag
-\big[A(t) - A(\tau)\big]A^{-1}(\tau)
= g_j(t)\triangle\big([g_j(t)-g_j(\tau)]g_j^{-1}(\tau)\triangle^{-1}\big(g_j^{-1}(\tau)\cdot\big)\big)\circ\Phi
\\
\label{eqn:1a}
+[g_j(t)-g_j(\tau)]\triangle\big(g_j(\tau)g_j^{-1}(\tau)\triangle^{-1}\big(g_j^{-1}(\tau)\cdot\big)\big)\circ\Phi.
\end{eqnarray}
We estimate the two terms on the right separately.  The second term simplifies to
\begin{equation}
\label{eqn:1b}
[g_j(t)-g_j(\tau)]\big(g_j^{-1}(\tau)\cdot\big)\circ\Phi.
\end{equation}
We decompose the first term in \eqref{eqn:1a} further into 
\begin{eqnarray}\notag
g_j(t)\triangle\big([g_j(t)-g_j(\tau)]g_j^{-1}(\tau)\triangle^{-1}\big(g_j^{-1}(\tau)\cdot\big)\big)\circ\Phi
\\
 = \big(g_j(t)[g_j(t)-g_j(\tau)]g_j^{-2}(\tau)\cdot\big)\circ\Phi
\label{eqn:3a}
 \\
+2g_j(t)\nabla\big([g_j(t)-g_j(\tau)]g_j^{-1}(\tau)\big)\cdot\nabla\triangle^{-1}\big(g_j^{-1}(\tau)\cdot\big)\circ\Phi
\label{eqn:3b}
\\
+g_j(t)\triangle\big([g_j(t)-g_j(\tau)]g_j^{-1}(\tau)\big)\cdot\triangle^{-1}\big(g_j^{-1}(\tau)\cdot\big)\circ\Phi.
\label{eqn:3c}
\end{eqnarray}
Thus to prove \eqref{holderAestIMP} it suffices to estimate \eqref{eqn:1b} - \eqref{eqn:3c}.

We will now show that we can estimate \eqref{eqn:1b} and \eqref{eqn:3a} in a similar way.  
In particular, because of the $L^2(\threed)$ boundedness of $\Phi$, it suffices to establish
\begin{equation}\label{eq:red1}
\|[g_j(t)-g_j(\tau)]g_j^{-1}(\tau)\|_{L^{\infty}(\threed)}
\max\{1,
\|g_j(t)g_j^{-1}(\tau)\|_{L^{\infty}}\}
\lesssim |t-\tau|^{\gamma}, \quad
\gamma \in (0,1).
\end{equation}
Note that $u^{(j-1)}$ will satisfy \eqref{asymptoticU} by the induction assumption which yields from \eqref{operatorDEF} that
$
\|g_j(t)g_j^{-1}(\tau)\|_{L^{\infty}(\threed)}  \lesssim 1.
$
We thus reduce to showing that
\begin{equation}\label{eqn:4}
\big|\la x \ra\big[(-\triangle)^{-1}u^{(j-1)}(t, x) - (-\triangle)^{-1}u^{(j-1)}(\tau, x)\big]\big|\lesssim |t-\tau|^\gamma.
\end{equation}
Both \eqref{eqn:1b} and \eqref{eqn:3a} will then satisfy the bound \eqref{holderAestIMP}.  Note that without loss of generality below we can assume that $|t-\tau| \le 1$ (since we are proving local existence).

\begin{lemma}\label{lem:red1}
Under the hypotheses of Lemma~\ref{lem:bounds} we have the bound 
\[
\big|\la x \ra\big[(-\triangle)^{-1}u^{(j-1)}(t, x) - (-\triangle)^{-1}u^{(j-1)}(\tau, x)\big]\big|\lesssim |t-\tau|^\gamma
\]
for some absolute constant $\gamma>0$. The implied constant depends on the quantities $D_i$ in Lemma~\ref{lem:bounds}.
\end{lemma}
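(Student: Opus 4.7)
The plan is to write the difference as a time integral and estimate the integrand pointwise. Specifically,
\[
(-\triangle)^{-1}u^{(j-1)}(t,x) - (-\triangle)^{-1}u^{(j-1)}(\tau,x) = \int_\tau^t (-\triangle)^{-1}\partial_s u^{(j-1)}(s,x)\,ds,
\]
and the iteration equation \eqref{eqn:modelj} applied to $u^{(j-1)}$ gives $\partial_s u^{(j-1)} = \big[(-\triangle)^{-1}u^{(j-2)}\big]\triangle u^{(j-1)} + \alpha (u^{(j-2)})^2$. The goal is to prove $|(-\triangle)^{-1}\partial_s u^{(j-1)}(s,x)|\lesssim \la x\ra^{-1}$ uniformly for $s\in[0,T)$, from which integrating in $s$ over $[\tau,t]$ yields a Lipschitz estimate (any $\gamma\in(0,1)$ follows a fortiori).

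The quadratic source term is handled easily. The induction hypothesis \eqref{lem:bound2}, together with the Sobolev embedding $H^2(\threed)\hookrightarrow L^\infty(\threed)$, provides $\|u^{(j-2)}(s)\|_{L^\infty}\lesssim 1$ uniformly in $s$, so $(u^{(j-2)})^2\in L^1\cap L^\infty(\threed)$. Since all iterates are radial, the radial Newton representation \eqref{NewtonFormula3D} yields $\la x\ra(-\triangle)^{-1}[(u^{(j-2)})^2](x)\lesssim 1$ for $|x|\geq 1$, while a standard local Newton-potential estimate in $L^p$ for $p>3/2$ handles $|x|\leq 1$.

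The main obstacle is the quasilinear contribution, since applying $(-\triangle)^{-1}$ naively to $[(-\triangle)^{-1}u^{(j-2)}]\triangle u^{(j-1)}$ leaves two derivatives on $u^{(j-1)}$. The resolution is the product-rule identity
\[
\big[(-\triangle)^{-1}u^{(j-2)}\big]\triangle u^{(j-1)} = \triangle\big([(-\triangle)^{-1}u^{(j-2)}]u^{(j-1)}\big) + u^{(j-1)}u^{(j-2)} - 2\nabla\big[(-\triangle)^{-1}u^{(j-2)}\big]\cdot \nabla u^{(j-1)},
\]
using $-\triangle(-\triangle)^{-1}u^{(j-2)}=u^{(j-2)}$. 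Applying $(-\triangle)^{-1}$ collapses the divergence-form term to $-[(-\triangle)^{-1}u^{(j-2)}]u^{(j-1)}$, which is $O(\la x\ra^{-1})$ by \eqref{asymptoticU} combined with $\|u^{(j-1)}\|_{L^\infty}\lesssim 1$; the mixed product $u^{(j-1)}u^{(j-2)}$ sits uniformly in $L^1\cap L^\infty$ and is handled as in the source term. For the cross-gradient term, radiality and Newton's formula give the explicit bound $|\nabla (-\triangle)^{-1}u^{(j-2)}(x)|=\frac{1}{3|x|^{2}}\int_0^{|x|}u^{(j-2)}(\rho)\rho^{2}\,d\rho\lesssim \min(|x|,|x|^{-2})$ from $u^{(j-2)}\in L^1\cap L^\infty$; combined with $\nabla u^{(j-1)}\in L^2\cap L^6$ (Sobolev embedding from the weighted $H^2$-bound \eqref{lem:bound2}), H\"older places $\nabla(-\triangle)^{-1}u^{(j-2)}\cdot\nabla u^{(j-1)}\in L^1\cap L^p$ for some $p>3/2$, uniformly in $s$. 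A final application of Newton's formula together with the local $L^p$-potential estimate provides the $\la x\ra^{-1}$ pointwise bound, summing the three contributions gives the required uniform estimate, and integration over $[\tau,t]$ completes the proof.
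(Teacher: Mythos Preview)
Your argument is correct and in fact yields the stronger Lipschitz bound $\gamma=1$. The route, however, is genuinely different from the paper's, and the difference matters for how the lemma is used downstream.

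The paper never invokes the weighted $H^2$ bound \eqref{lem:bound2}; its proof relies only on the $L^1\cap L^{2+}$ control ($D_3$) and the monotonicity estimate \eqref{monEST}. It splits the Newton integrals \eqref{NewtonFormula3D} into a small-$|y|$ region $|y|\lesssim|t-\tau|^{\beta}$ (estimated by Cauchy--Schwarz and $\|u^{(j-1)}\|_{L^2}$, contributing $|t-\tau|^{\beta/2}$) and the complement, where one substitutes the equation for $u^{(j-1)}(t)-u^{(j-1)}(\tau)$ and integrates by parts, using $u^{(j-2)}(y)\lesssim|y|^{-3}$ to get $|t-\tau|^{1-4\beta}$. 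Optimizing $\beta$ gives a genuinely fractional $\gamma$.

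Your approach instead exploits $D_4$: the $H^2$ bound on $u^{(j-1)}$, $u^{(j-2)}$ supplies the $L^\infty$ control and $\nabla u^{(j-1)}\in L^2\cap L^6$ that make the product-rule decomposition work pointwise, and the Lipschitz bound falls out directly. This is cleaner, but the resulting implied constant depends on $D_4$. The paper's care in avoiding $D_4$ is what makes Remark~\ref{rem: Holder} go through with $T$ depending only on $\|u_0\|_{X}$, $r_0$, and \eqref{initialNON}---which is exactly the content of Lemma~\ref{lem:bounds} and is used critically in the proof of Theorem~\ref{thm:Main}, where one partitions $[0,T_{\max})$ into subintervals of a fixed length $T$ independent of the (possibly growing) $H^2$ norm. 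With your version of the lemma the H\"older constant, and hence $T$, would inherit dependence on $\|\triangle\tilde u_0\|_{L^2}$, and that global continuation step would need to be reworked. So your proof establishes the stated lemma, but the paper's weaker-hypothesis proof is what the larger argument actually requires.
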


\begin{proof}(Lemma~\ref{lem:red1})
Pick some $\beta>0$. Recalling \eqref{NewtonFormula3D} with $u = u^{(j-1)}$, we can write
\begin{equation}\nonumber\begin{split}
(-\triangle)^{-1}u^{(j-1)}(t, x) 
=&
\frac{1}{4 \pi|x|}\int \chi_{|t-\tau|^{\beta}\leq|y|\leq |x|}u^{(j-1)}(t, y)\,dy 
\\
&+ \int \chi_{|y|\geq \max\{|t-\tau|^{\beta}, |x|\}}\frac{u^{(j-1)}(t, y)}{4 \pi |y|}dy
\\
&+O(\la x \ra^{-1}|t-\tau|^{\frac{\beta}{2}}\|u^{(j-1)}(t)\|_{L^{2}(\threed)}).
\end{split}\end{equation}
where $\chi_{a\leq \cdot\leq b} = \phi(\frac{|y|}{a})-\phi(\frac{|y|}{b})$, and $\chi_{|y|\geq a} = \phi(\frac{|y|}{a})$ for $\phi(x)$ a smooth cutoff which equals $1$ for $|x|\geq 2$ and vanishes identically for $|x|\leq 1$.   
Note that the $O(\cdot)$ terms result from applying the Cauchy-Schwarz inequality to the terms which were not written.  

Then we obtain 
$$
(-\triangle)^{-1}u^{(j-1)}(t, x) - (-\triangle)^{-1}u^{(j-1)}(\tau, x)
= I_1 +  I_2 +  I_3.
$$
Here 
\begin{equation}\nonumber\begin{split}
I_1
=&\int\chi_{|y|\geq \max\{|t-\tau|^{\beta},|x|\}}\frac{[u^{(j-1)}(t, y) - u^{(j-1)}(\tau, y)]}{4 \pi|y|}dy,
\\
I_2 =&\frac{1}{4 \pi |x|}\int\chi_{|t-\tau|^{\beta}\leq|y|\leq |x|}[u^{(j-1)}(t, y) - u^{(j-1)}(\tau, y)]\,dy,
\\
I_3 = & O\left(\la x \ra^{-1}|t-\tau|^{\frac{\beta}{2}}\left\{\|u^{(j-1)}(t)\|_{L^{2}(\threed)}+ \|u^{(j-1)}(\tau)\|_{L^{2}(\threed)} \right\}\right).
\end{split}\end{equation}
Now we refer to the equation defining $u^{(j-1)}(t,\cdot)$ in \eqref{eqn:modelj}, whence we get 
\[
u^{(j-1)}(t, \cdot) - u^{(j-1)}(\tau, \cdot)=\int_t^\tau\big[(-\triangle)^{-1}u^{(j-2)}(s, \cdot)\triangle u^{(j-1)}(s,\cdot) + \alpha\big(u^{(j-2)}(s, \cdot)\big)^2\big] ds.
\]
Thus we obtain the identity
\begin{equation}\nonumber\begin{split}
I_1
=&\int_t^\tau\int\chi_{|y|\geq \max\{|t-\tau|^{\beta},|x|\}}\frac{(-\triangle)^{-1}u^{(j-2)}(s, y)\triangle u^{(j-1)}(s,y)}{4\pi  |y|}\, dy ds
\\
& +
\int_t^\tau\int\chi_{|y|\geq \max\{|t-\tau|^{\beta},|x|\}}\frac{ \alpha\big(u^{(j-2)}(s, y)\big)^2}{4\pi  |y|}\, dy ds,
\end{split}\end{equation}
and similarly for $I_2$.

We will use the following general monotonicity estimate.  Suppose that $u(x)\ge 0$ is any radial monotonically decreasing function.  Further assume that
$$
\int_{0}^{r_0} u (r) r^2 dr \leq D_3.
$$
Then by monotonicity
\begin{equation}
\label{monEST}
u(r_0)\leq \frac{ 3 D_3}{ r_0^3}.
\end{equation}
We will use this estimate several times below.

With \eqref{monEST} applied to $u^{(j-2)}$, we have that
$
|u^{(j-2)}(t, x)|\lesssim |x|^{-3}.
$
Using this as well as performing integrations by parts, we obtain
$$
\left|
I_1
\right|
\lesssim |t-\tau|^{1-4\beta}\la x \ra^{-1},
$$
and similarly for  $I_2$. We conclude that 
\[
(-\triangle)^{-1}u^{(j-1)}(t, x) - (-\triangle)^{-1}u^{(j-1)}(\tau, x) = O\big(|t-\tau|^{1-4\beta}\la x \ra^{-1} + \la x \ra^{-1}|t-\tau|^{\frac{\beta}{2}}\big),
\]
where the implicit constant depends on $\|u^{(j-1)}\|_{X}+\|u^{(j-2)}\|_{X}$.  
Picking $\beta<\frac{1}{4}$, we obtain \eqref{eqn:4} with $\gamma = \min\{1-4\beta, \frac{\beta}{2}\}$.
\end{proof}

This proves the \eqref{eqn:4} and hence it proves the $L^2$ operator bound for 
\eqref{eqn:1b} and \eqref{eqn:3a}.  

We prove next the $L^2$ bound for \eqref{eqn:3b}.  
Decompose this term further as
\begin{align}
&\nonumber2g_j(t)\nabla\big([g_j(t)-g_j(\tau)]g_j^{-1}(\tau)\big)\cdot\nabla\triangle^{-1}\big(g_j^{-1}(\tau)\cdot\big)\circ\Phi
\\&= g_j(t) \big[g_j^{-1}(t)\nabla(-\triangle)^{-1}u^{(j-1)}(t, \cdot) - g_j^{-1}(\tau)\nabla(-\triangle)^{-1}u^{(j-1)}(\tau, \cdot)\big]\nonumber\\
&\hspace{7.5cm}\cdot g_j^{-1}(\tau)\nabla\triangle^{-1}\big(g_j^{-1}(\tau)\cdot\big)\circ\Phi\label{eq:red3}\\
&-g_j(t)\big([g_j(t)-g_j(\tau)]\nabla\triangle^{-1}u^{(j-1)}(\tau)g_j^{-3}(\tau)\big)\cdot\nabla\triangle^{-1}\big(g_j^{-1}(\tau)\cdot\big)\circ\Phi\label{eq:red4}
\end{align}
{\it{The estimate for \eqref{eq:red3}}}. Split it into 
\begin{equation}\label{eqn: 5}\begin{split}
&g_j(t) \big[g_j^{-1}(t)\nabla(-\triangle)^{-1}u^{(j-1)}(t, \cdot) - g_j^{-1}(\tau)\nabla(-\triangle)^{-1}u^{(j-1)}(\tau, \cdot)\big]\\
&\hspace{7.5cm}\cdot g_j^{-1}(\tau)\nabla\triangle^{-1}\big(g_j^{-1}(\tau)\cdot\big)\circ\Phi\\
& = \big[\nabla(-\triangle)^{-1}u^{(j-1)}(t, \cdot) - \nabla(-\triangle)^{-1}u^{(j-1)}(\tau, \cdot)\big]\\
&\hspace{7.5cm}\cdot g_j^{-1}(\tau)\nabla\triangle^{-1}\big(g_j^{-1}(\tau)\cdot\big)\circ\Phi\\
&+g_j^{-1}(\tau) \big[(g_j(\tau) - g_j(t)\nabla(-\triangle)^{-1}u^{(j-1)}(\tau, \cdot)\big]\\
&\hspace{7.5cm}\cdot g_j^{-1}(\tau)\nabla\triangle^{-1}\big(g_j^{-1}(\tau)\cdot\big)\circ\Phi\\
\end{split}\end{equation}
Commence with the first term on the right: as before, the trick consists in splitting into a region of small $|x|$ and one of large $|x|$: for $\beta>0$ to be determined, write 
\begin{equation}\nonumber\begin{split}
&\big[\nabla(-\triangle)^{-1}u^{(j-1)}(t, \cdot) - \nabla(-\triangle)^{-1}u^{(j-1)}(\tau, \cdot)\big]\cdot g_j^{-1}(\tau)\nabla\triangle^{-1}\big(g_j^{-1}(\tau)\cdot\big)\circ\Phi\\
& =\frac{x}{4\pi|x|^{3}}\int_{|y|\leq|x|} \chi_{|y|\lesssim |t-\tau|^{\beta}}[u^{(j-1)}(t, y) -u^{(j-1)}(\tau, y)]\,dy\big]\cdot g_j^{-1}(\tau)\nabla\triangle^{-1}\big(g_j^{-1}(\tau)\cdot\big)\circ\Phi\\
&+\frac{x}{4\pi|x|^{3}}\int_{|y|\leq|x|}\chi_{|y|\gtrsim |t-\tau|^{\beta}}[u^{(j-1)}(t, y) -u^{(j-1)}(\tau, y)]\,dy\big]\cdot g_j^{-1}(\tau)\nabla\triangle^{-1}\big(g_j^{-1}(\tau)\cdot\big)\circ\Phi\\
 \end{split}\end{equation}
Then note that since 
\begin{equation}\label{eq:red4_5}
\big|\big(g_j^{-1}(\tau)\nabla\triangle^{-1}\big(g_j^{-1}(\tau)\cdot\big)\circ\Phi\big)(\tilde{u})\big|(x)\lesssim \max\{|x|^{-\frac{1}{2}}, |x|^{\frac{1}{2}}\}\|\tilde{u}\|_{L^2},
\end{equation}
we get 
\begin{equation}\nonumber\begin{split}
&\big|\frac{x}{4\pi|x|^{3}}\int_{|y|\leq|x|}\big(\chi_{|y|\lesssim |t-\tau|^{\beta}}[u^{(j-1)}(t, y) -u^{(j-1)}(\tau, y)]\,dy\big]\cdot g_j^{-1}(\tau)\nabla\triangle^{-1}\big(g_j^{-1}(\tau)\cdot\big)\circ\Phi\big)(\tilde{u})\big|\\&\lesssim \min\{|x|^{-\frac{5}{4}}, |x|^{-\frac{3}{2}}\}|t-\tau|^{\frac{\beta}{4}}\|\tilde{u}\|_{L^2}
\end{split}\end{equation}
This unfortunately fails logarithmically to be in $L^2(\threed)$. To remedy this, note that for radial $\tilde{u}\in L^2(\threed)$, we get due to Newton's formula 
\[
\chi_{|x|\sim 2^k}\big(g_j^{-1}(\tau)\nabla\triangle^{-1}\big(g_j^{-1}(\tau)\tilde{u}\big)\big)(x)\lesssim 2^{\frac{k}{2}}\big[\sum_{\tilde{j}\leq k}2^{2(\tilde{j}-k)}\|\chi_{|x|\sim 2^{\tilde{j}}}\tilde{u}\|_{L^2}+2^{-2k}\|\chi_{|x|\lesssim 1}\tilde{u}\|_{L^2}\big]
\]
where $\tilde{j}, k\in \mathbb{N}$. Indeed, we have 
\begin{align*}
|\chi_{|x|\sim 2^k}\big(g_j^{-1}(\tau)\nabla\triangle^{-1}\big(g_j^{-1}(\tau)\tilde{u}\big)\big)(x)|&\lesssim 
2^{\frac{k}{2}}\chi_{|x|\sim 2^k}|x|^{-2}\int_{|y|\leq|x|}g_j^{-1}(\tau)\tilde{u}\,dy\\
&\lesssim 2^{\frac{k}{2}}\sum_{\tilde{j}\leq k}2^{-2k+2\tilde{j}}\|\chi_{|x|\sim 2^{\tilde{j}}}\tilde{u}\|_{L^2} +2^{-\frac{3k}{2}}\|\chi_{|x|\lesssim 1}\tilde{u}\|_{L^2}
\end{align*}
Here we have used the Cauchy-Schwarz inequality to estimate the inherent integral term.
Then note that 
\begin{equation}\label{eq:red5}\begin{split}
&\|\sum_{k\geq 0} |x|^{-2}\chi_{|x|\sim 2^k}\big(g_j^{-1}(\tau)\nabla\triangle^{-1}\big(g_j^{-1}(\tau)\tilde{u}\big)\big)\|_{L^2}^2\\
&\lesssim \sum_{k\geq 0}\big[\sum_{j\leq k}2^{2(j-k)}\|\chi_{|x|\sim 2^j}\tilde{u}\|_{L^2}+2^{-2k}\|\chi_{|x|\lesssim 1}\tilde{u}\|_{L^2}\big]^2\\
&\lesssim \|\tilde{u}\|_{L^2}^2
\end{split}\end{equation}
We thus get (here $\|\cdot\|$ refers to the $L^2$-operator bound)
\begin{equation}\nonumber\begin{split}
&\|\frac{x}{4\pi|x|^{3}}\int_{|y|\leq|x|} \chi_{|y|\lesssim |t-\tau|^{\beta}}[u^{(j-1)}(t, y) -u^{(j-1)}(\tau, y)]\,dy\big]\cdot g_j^{-1}(\tau)\nabla\triangle^{-1}\big(g_j^{-1}(\tau)\cdot\big)\circ\Phi\|\\
&\lesssim |t-\tau|^{\frac{\beta}{4}}
\end{split}\end{equation}
On the other hand, for the second term above, we again use the equation satisfied by $u^{(j-1)}$, which furnishes 
\begin{equation}\nonumber\begin{split}
&\frac{x}{4\pi|x|^{3}}\int_{|y|\leq|x|}\chi_{|y|\gtrsim |t-\tau|^{\beta}}[u^{(j-1)}(t, y) -u^{(j-1)}(\tau, y)]\,dy\big]\\
& = \frac{x}{4\pi|x|^{3}}\int_t^\tau\int_{|y|\leq|x|}\chi_{|y|\gtrsim |t-\tau|^{\beta}}[(-\triangle)^{-1}u^{(j-2)}(\tau, \cdot)\triangle u^{(j-1)}(\tau,\cdot) + \alpha\big(u^{(j-2)}(\tau, \cdot)\big)^2]\,dyds\big],
\end{split}\end{equation}
whence we have (with $\|.\|$ denoting $L^2$-operator norm)
\begin{equation}\nonumber\begin{split}
&\|\frac{x}{4\pi|x|^{3}}\int_{|y|\leq|x|}\chi_{|y|\gtrsim |t-\tau|^{\beta}}[u^{(j-1)}(t, y) -u^{(j-1)}(\tau, y)]\,dy\big]\cdot g_j^{-1}(\tau)\nabla\triangle^{-1}\big(g_j^{-1}(\tau)\cdot\big)\circ\Phi\|\\
&\lesssim |t-\tau|^{1-2\beta}
\end{split}\end{equation}
In summary, we obtain 
\[
\|\big[\nabla(-\triangle)^{-1}u^{(j-1)}(t, \cdot) - \nabla(-\triangle)^{-1}u^{(j-1)}(\tau, \cdot)\big]\cdot g_j^{-1}(\tau)\nabla\triangle^{-1}\big(g_j^{-1}(\tau)\cdot\big)\circ\Phi\|\lesssim |t-\tau|^{\gamma}
\]
with $\gamma = \min\{\frac{\beta}{4}, 1-2\beta\}$ where we take $\beta<\frac{1}{2}$.

The second term in \eqref{eqn: 5} on the other hand can be estimated by using 
\[
\big|g_j(t)g_j^{-2}(\tau)[(g_j(\tau) - g_j(t)]\big| \lesssim |t-\tau|^{\gamma}
\]
with $\gamma>0$, see \eqref{eq:red1}, and the bound
\begin{equation}\label{eq:red7}
\|\nabla(-\triangle)^{-1}u^{(j-1)}(\tau, \cdot)\cdot g_j^{-1}(\tau)\nabla\triangle^{-1}\big(g_j^{-1}(\tau)\cdot\big)\circ\Phi\|\lesssim 1
\end{equation}
which follows from \eqref{eq:red5}: indeed, we have 
\[
|\nabla(-\triangle)^{-1}u^{(j-1)}(\tau, x)|\lesssim \min\{|x|^{-\frac{1}{2}}, |x|^{-2}\}
\]
and so one finds, using also  \eqref{eq:red4_5}
\begin{align*}
&\|\big(\nabla(-\triangle)^{-1}u^{(j-1)}(\tau, \cdot)\cdot g_j^{-1}(\tau)\nabla\triangle^{-1}\big(g_j^{-1}(\tau)\cdot\big)\circ\Phi\big)(u)\|_{L^2}\\
&\lesssim \|\sum_{k\geq 0}|x|^{-2}\chi_{|x|\sim 2^k}\big(g_j^{-1}(\tau)\nabla\triangle^{-1}\big(g_j^{-1}(\tau)\cdot\big)\circ\Phi\big)(u)\|_{L^2}\\
&+\|\chi_{|x|\lesssim 1}|x|^{-\frac{1}{2}}\big(g_j^{-1}(\tau)\nabla\triangle^{-1}\big(g_j^{-1}(\tau)\cdot\big)\circ\Phi\big)(u)\|_{L^2}\\
&\lesssim \|u\|_{L^2}
\end{align*}

{\it{The estimate for \eqref{eq:red4}.}}  Due to \eqref{eq:red1} as well as \eqref{eq:red7}, we have 
\begin{align*}
&\|g_j(t)\big([g_j(t)-g_j(\tau)]\nabla\triangle^{-1}u^{(j-1)}(\tau)g_j^{-3}(\tau)\big)\cdot\nabla\triangle^{-1}\big(g_j^{-1}(\tau)\cdot\big)\circ\Phi\|\\
&\lesssim \|g_j(t)[g_j(t)-g_j(\tau)]g_j^{-2}(\tau)\|_{L^\infty}\|\nabla(-\triangle)^{-1}u^{(j-1)}(\tau, \cdot)\cdot g_j^{-1}(\tau)\nabla\triangle^{-1}\big(g_j^{-1}(\tau)\cdot\big)\circ\Phi\|\\
&\lesssim |t-\tau|^{\gamma}
\end{align*}

This completes the $L^2$  bound for \eqref{eqn:3b}.  

Lastly we prove the $L^2$ operator bound for \eqref{eqn:3c}.  
We decompose it into 
\begin{equation}\label{eqn: 6}\begin{split}
&g_j(t)\triangle\big([g_j(t)-g_j(\tau)]g_j^{-1}(\tau)\big)\cdot\triangle^{-1}\big(g_j^{-1}(\tau)\cdot\big)\circ\Phi\\
&= g_j(t)\big([g_j(t)-g_j(\tau)]u^{(j-1)}g_j^{-3}(\tau)\big)\cdot\triangle^{-1}\big(g_j^{-1}(\tau)\cdot\big)\circ\Phi\\
&+g_j(t)\big([g_j(t)-g_j(\tau)]\big(\nabla(-\triangle)^{-1}u^{(j-1)}\big)^2g_j^{-5}(\tau)\big)\cdot\triangle^{-1}\big(g_j^{-1}(\tau)\cdot\big)\circ\Phi\\
&+g_j(t)\big(\nabla[g_j(t)-g_j(\tau)]\cdot\nabla(-\triangle)^{-1}u^{(j-1)}(\tau)g_j^{-3}(\tau)\big)\cdot\triangle^{-1}\big(g_j^{-1}(\tau)\cdot\big)\circ\Phi\\
&+g_j(t)\big(\triangle[g_j(t)-g_j(\tau)]g_j^{-1}(\tau)\big)\cdot\triangle^{-1}\big(g_j^{-1}(\tau)\cdot\big)\circ\Phi\\
\end{split}\end{equation}
{\it{First term on right hand side of \eqref{eqn: 6}}}. Here we need to exploit the precise structure of 
\[
 \Phi = g_j(\tau)(-\triangle)\big(g_j(\tau)\cdot\big)\circ \big(g_j(\tau)(-\triangle)\big(g_j(\tau)\cdot\big) + I\big)^{-1}
 \]
 Hence we get for $\tilde{u}\in L^2(\threed)$
 \begin{equation}\nonumber\begin{split}
 &\big(g_j(t)\big([g_j(t)-g_j(\tau)]u^{(j-1)}g_j^{-3}(\tau)\big)\cdot\triangle^{-1}\big(g_j^{-1}(\tau)\cdot\big)\circ\Phi\big)(\tilde{u})\\
 &= g_j(t)\big([g_j(t)-g_j(\tau)]u^{(j-1)}g_j^{-2}(\tau)\big)\cdot\big(g_j(\tau)(-\triangle)\big(g_j(\tau)\cdot\big) + I\big)^{-1}(\tilde{u})\\
 &=\chi_{|x|\lesssim1}g_j(t)\big([g_j(t)-g_j(\tau)]u^{(j-1)}g_j^{-2}(\tau)\big)\cdot\big(g_j(\tau)(-\triangle)\big(g_j(\tau)\cdot\big) + I\big)^{-1}(\tilde{u})\\
 &+\chi_{|x|\gtrsim 1}g_j(t)\big([g_j(t)-g_j(\tau)]u^{(j-1)}g_j^{-2}(\tau)\big)\cdot\big(g_j(\tau)(-\triangle)\big(g_j(\tau)\cdot\big) + I\big)^{-1}(\tilde{u})\\
 \end{split}\end{equation}
The second term here can be immediately estimated since 
\[
\|\chi_{|x|\gtrsim 1}g_j(t)\big([g_j(t)-g_j(\tau)]u^{(j-1)}g_j^{-2}(\tau)\|_{L_x^\infty}\lesssim |t-\tau|^{\gamma}
\]
for $\gamma>0$ using Lemma~\ref{lem:red1} as well as \eqref{monEST}, and hence 
\begin{equation}\nonumber\begin{split}
&\|\chi_{|x|\gtrsim 1}g_j(t)\big([g_j(t)-g_j(\tau)]u^{(j-1)}g_j^{-2}(\tau)\big)\cdot\big(g_j(\tau)(-\triangle)\big(g_j(\tau)\cdot\big) + I\big)^{-1}(\tilde{u})\|_{L^2}\\
&\lesssim  |t-\tau|^{\gamma}\|\big(g_j(\tau)(-\triangle)\big(g_j(\tau)\cdot\big) + I\big)^{-1}(\tilde{u})\|_{L^2}\lesssim  |t-\tau|^{\gamma}\|\tilde{u}\|_{L^2}
\end{split}\end{equation}
For the first term above, denoting $v = \chi_{|x|\lesssim1}\big(g_j(\tau)(-\triangle)\big(g_j(\tau)\cdot\big) + I\big)^{-1}(\tilde{u})$, observe that by elliptic regularity theory we have 
\[
\|v\|_{W^{2,2}}\lesssim \|\tilde{u}\|_{L^2},
\]
whence we have 
\begin{equation}\nonumber\begin{split}
&\|\chi_{|x|\lesssim1}g_j(t)\big([g_j(t)-g_j(\tau)]u^{(j-1)}g_j^{-2}(\tau)\big)\cdot\big(g_j(\tau)(-\triangle)\big(g_j(\tau)\cdot\big) + I\big)^{-1}(\tilde{u})\|_{L^2}\\
&\lesssim \|g_j(t)\big([g_j(t)-g_j(\tau)]g_j^{-2}(\tau)\|_{L^\infty}\|u^{(j-1)}\|_{L^2}\|\tilde{u}\|_{L^2}\lesssim|t-\tau|^{\gamma}\|\tilde{u}\|_{L^2}
\end{split}\end{equation}

{\it{Second term on right hand side of \eqref{eqn: 6}}}. Write this, applied to $\tilde{u}\in L^2(\threed)$, as 
\begin{equation}\nonumber\begin{split}
&g_j(t)\big([g_j(t)-g_j(\tau)]\big(\nabla(-\triangle)^{-1}u^{(j-1)}\big)^2g_j^{-4}(\tau)\cdot \big(g_j(\tau)(-\triangle)\big(g_j(\tau)\cdot\big) + I\big)^{-1}(\tilde{u})\\
&= \chi_{|x|\lesssim1}g_j(t)\big([g_j(t)-g_j(\tau)]\big(\nabla(-\triangle)^{-1}u^{(j-1)}\big)^2g_j^{-4}(\tau)\cdot \big(g_j(\tau)(-\triangle)\big(g_j(\tau)\cdot\big) + I\big)^{-1}(\tilde{u})\\
&+\chi_{|x|\gtrsim1}g_j(t)\big([g_j(t)-g_j(\tau)]\big(\nabla(-\triangle)^{-1}u^{(j-1)}\big)^2g_j^{-4}(\tau)\cdot \big(g_j(\tau)(-\triangle)\big(g_j(\tau)\cdot\big) + I\big)^{-1}(\tilde{u})\\
\end{split}\end{equation}
For the first term on the right, we have 
\[
 \chi_{|x|\lesssim1}\big(\nabla(-\triangle)^{-1}u^{(j-1)}\big)^2\sim  \chi_{|x|\lesssim1}|x|^{-1}
\]
in light of Newton's formula \eqref{NewtonFormula3D}, H\"{o}lder's inequality and the fact that $u^{(j-1)}\in L^2(\threed)$. Then reasoning as for the first term of \eqref{eqn: 6}, we obtain
\begin{equation}\nonumber\begin{split}
&\| \chi_{|x|\lesssim1}g_j(t)\big([g_j(t)-g_j(\tau)]\big(\nabla(-\triangle)^{-1}u^{(j-1)}\big)^2g_j^{-4}(\tau)\cdot \big(g_j(\tau)(-\triangle)\big(g_j(\tau)\cdot\big) + I\big)^{-1}(\tilde{u})\|_{L^2}\\
&\lesssim |t-\tau|^{\gamma}\|\big(\nabla(-\triangle)^{-1}u^{(j-1)}\big)^2\|_{L^2}\|\big(g_j(\tau)(-\triangle)\big(g_j(\tau)\cdot\big) + I\big)^{-1}(\tilde{u})\|_{L^\infty}\\
&\lesssim |t-\tau|^{\gamma}\|\tilde{u}\|_{L^2}
\end{split}\end{equation}
For the term 
\[
\chi_{|x|\gtrsim1}g_j(t)\big([g_j(t)-g_j(\tau)]\big(\nabla(-\triangle)^{-1}u^{(j-1)}\big)^2g_j^{-4}(\tau)\cdot \big(g_j(\tau)(-\triangle)\big(g_j(\tau)\cdot\big) + I\big)^{-1}(\tilde{u}), 
\]
we can estimate it by 
\begin{align*}
&\|\chi_{|x|\gtrsim1}g_j(t)\big([g_j(t)-g_j(\tau)]\big(\nabla(-\triangle)^{-1}u^{(j-1)}\big)^2g_j^{-4}(\tau)\cdot \big(g_j(\tau)(-\triangle)\big(g_j(\tau)\cdot\big) + I\big)^{-1}(\tilde{u})\|_{L^2}\\
&\lesssim \|g_j(t)[g_j(t)-g_j(\tau)]g_j^{-2}(\tau)\|_{L^\infty}\|\chi_{|x|\gtrsim1}\nabla(-\triangle)^{-1}u^{(j-1)}\big)^2g_j^{-2}(\tau)\|_{L^\infty}\\
&\hspace{6.5cm}\cdot\|\big(g_j(\tau)(-\triangle)\big(g_j(\tau)\cdot\big) + I\big)^{-1}(\tilde{u})\|_{L^2}\\
&\lesssim |t-\tau|^{\gamma}\|\tilde{u}\|_{L^2},
\end{align*}
having used the bounds \eqref{eq:red1} as well as $|\nabla(-\triangle)^{-1}u^{(j-1)}(\cdot, x)|\lesssim |x|^{-2}$.\\
{\it{Third term of right hand side of \eqref{eqn: 6}}}. Write it (applied to $\tilde{u}\in L^2(\threed)$) as
\begin{equation}\nonumber\begin{split}
&g_j(t)\big[ g_j^{-1}(t)\nabla(-\triangle)^{-1}u_j(t) - g_j^{-1}(\tau)\nabla(-\triangle)^{-1}u_j(\tau)\big]\\&\hspace{1cm}\cdot\nabla(-\triangle)^{-1}u^{(j-1)}(\tau) g_j^{-2}(\tau)\big(g_j(\tau)(-\triangle)\big(g_j(\tau)\cdot\big) + I\big)^{-1}(\tilde{u})\\
& = \big[\nabla(-\triangle)^{-1}u_j(t) - \nabla(-\triangle)^{-1}u_j(\tau)\big]\\&\hspace{2cm}\cdot\nabla(-\triangle)^{-1}u^{(j-1)}(\tau)g_j^{-2}(\tau)\big(g_j(\tau)(-\triangle)\big(g_j(\tau)\cdot\big) + I\big)^{-1}(\tilde{u})\\
&+g_j(t)[g_j^{-1}(t) -  g_j^{-1}(\tau)]\nabla(-\triangle)^{-1}u_j(\tau)\\&\hspace{2cm}\cdot\nabla(-\triangle)^{-1}u^{(j-1)}(\tau) g_j^{-2}(\tau)\big(g_j(\tau)(-\triangle)\big(g_j(\tau)\cdot\big) + I\big)^{-1}(\tilde{u})\\
\end{split}\end{equation}
For both of these one splits into the regions $|x|\lesssim 1$, $|x|\gtrsim 1$, and one further decomposes the integrals giving $\nabla(-\triangle)^{-1}u_j(t) - \nabla(-\triangle)^{-1}u_j(\tau)$ into the regions $|y|\lesssim |t-\tau|^{\beta}$, $|y|\gtrsim |t-\tau|^{\beta}$. In the region $|x|\lesssim 1$, one places 
\[
\big(g_j(\tau)(-\triangle)\big(g_j(\tau)\cdot\big) + I\big)^{-1}(\tilde{u})
\]
into $L^\infty$ and the remaining product directly into $L^2$, as in our estimates for \eqref{eqn:3b}. In the region $|x|\gtrsim 1$, one easily checks directly that 
\begin{equation}\nonumber\begin{split}
&\|\chi_{|x|\gtrsim 1}\big[\nabla(-\triangle)^{-1}u_j(t) - \nabla(-\triangle)^{-1}u_j(\tau)\big]\cdot\nabla(-\triangle)^{-1}u^{(j-1)}(\tau)g_j^{-2}(\tau)\|_{L^{\infty}},\\
&\lesssim |t-\tau|^{\gamma}
\end{split}\end{equation}
\begin{equation}\nonumber\begin{split}
\|\chi_{|x|\gtrsim 1}g_j(t)[g_j^{-1}(t) -  g_j^{-1}(\tau)]\nabla(-\triangle)^{-1}u_j(\tau)\cdot\nabla(-\triangle)^{-1}u^{(j-1)}(\tau) g_j^{-2}(\tau)\|_{L^{\infty}}\lesssim |t-\tau|^{\gamma}
\end{split}\end{equation}
from which the desired estimate easily follows. 

{\it{The fourth term of \eqref{eqn: 6}}}. We expand it into 
\begin{align}
&\nonumber g_j(t)\big(\triangle[g_j(t)-g_j(\tau)]g_j^{-1}(\tau)\big)\cdot\triangle^{-1}\big(g_j^{-1}(\tau)\cdot\big)\circ\Phi\\
&= [u^{(j-1)}(t, \cdot) - u^{(j-1)}(\tau, \cdot)] ]g_j^{-1}(\tau)\big)\cdot\triangle^{-1}\big(g_j^{-1}(\tau)\cdot\big)\circ\Phi\label{eq:red8}\\
&+g_j(t)\big[(\nabla(-\triangle)^{-1}u^{(j-1)}(t, \cdot))^2g_j^{-3}(t) - (\nabla(-\triangle)^{-1}u^{(j-1)}(\tau, \cdot))^2g_j^{-3}(\tau)\big]\label{eq:red9}
\\ &\hspace{7cm} \cdot g_j^{-1}(\tau)\cdot\triangle^{-1}\big(g_j^{-1}(\tau)\cdot\big)\circ\Phi\nonumber
\end{align}
\it{The expression \eqref{eq:red8}}. It appears to require a somewhat different method, as we no longer average the difference term $[u^{(j-1)}(t, \cdot) - u^{(j-1)}(\tau, \cdot)]$ over $x$. We proceed in a number of steps, taking advantage of frequency localization: observe that due to the non-increasing nature of $u^{(j-1)}$, as well as radiality, we have (for some $\beta_1>0$ to be chosen)
\[
\|\nabla\big[\chi_{|t-\tau|^{-\beta_1}\gtrsim |x|\gtrsim |t-\tau|^{\beta_1}}u^{(j-1)}(\tau, \cdot)\big]\|_{L^1}\lesssim |t-\tau|^{-\frac{7\beta_1}{2}},\,s\in [0, T]
\]
Now let $P_{<a}$, $P_{\geq b}$ etc  be standard Littlewood-Paley frequency cutoffs localizing to frequencies $\lesssim 2^a$, $\gtrsim 2^{b}$, respectively. Then we have 
\[
\|P_{\geq -\beta_2\log_2|t-\tau|}\big[\chi_{|t-\tau|^{-\beta_1}\gtrsim |x|\gtrsim |t-\tau|^{\beta_1}}u^{(j-1)}(\tau, \cdot)\big]\|_{L^1}\lesssim |t-\tau|^{\beta_2-\frac{7\beta_1}{2}}
\]
Interpolating this with the bound (using radiality and monotonicity of $u^{(j-1)}$)
\[
\|P_{\geq -\beta_2\log_2|t-\tau|}\big[\chi_{|t-\tau|^{-\beta_1}\gtrsim |x|\gtrsim |t-\tau|^{\beta_1}}u^{(j-1)}(\tau, \cdot)\big]\|_{L^\infty}\lesssim |t-\tau|^{-3\beta_1},
\]
we get 
\begin{equation}\label{eqn: 7}
\|P_{\geq -\beta_2\log_2|t-\tau|}\big[\chi_{|t-\tau|^{-\beta_1}\gtrsim |x|\gtrsim |t-\tau|^{\beta_1}}u^{(j-1)}(\tau, \cdot)\big]\|_{L^2}\lesssim |t-\tau|^{\frac{\beta_2}{2}-\frac{13\beta_1}{4}}
\end{equation}
In order to use this, we decompose (here again $\tilde{u}\in L^2(\threed)$)
\begin{equation}\label{eqn: mess0}\begin{split}
&\big([u^{(j-1)}(t, \cdot) - u^{(j-1)}(\tau, \cdot)] g_j^{-1}(\tau)\big)\cdot\triangle^{-1}\big(g_j^{-1}(\tau)\cdot\big)\circ\Phi\big)(\tilde{u})\\
&=\chi_{|x|\gtrsim |t-\tau|^{-\beta_1}}([u^{(j-1)}(t, \cdot) - u^{(j-1)}(\tau, \cdot)] \big)\big(g_j(\tau)(-\triangle)g_j(\tau)+I\big)^{-1}\tilde{u}\\
&+\chi_{|t-\tau|^{-\beta_1}\gtrsim |x|\gtrsim |t-\tau|^{\beta_1}}([u^{(j-1)}(t, \cdot) - u^{(j-1)}(\tau, \cdot)] \big)\big(g_j(\tau)(-\triangle)g_j(\tau)+I\big)^{-1}\tilde{u}\\
&+\chi_{|x|\lesssim |t-\tau|^{\beta_1}}([u^{(j-1)}(t, \cdot) - u^{(j-1)}(\tau, \cdot)] \big)\big(g_j(\tau)(-\triangle)g_j(\tau)+I\big)^{-1}\tilde{u}\\
\end{split}\end{equation}
For the first term on the right, use 
\begin{equation}\label{eqn: mess1}\begin{split}
&\|\chi_{|x|\gtrsim |t-\tau|^{-\beta_1}}([u^{(j-1)}(t, \cdot) - u^{(j-1)}(\tau, \cdot)] \big)\big(g_j(\tau)(-\triangle)g_j(\tau)+I\big)^{-1}\tilde{u}\|_{L^2}\\
&\lesssim \|\chi_{|x|\gtrsim |t-\tau|^{-\beta_1}}([u^{(j-1)}(t, \cdot) - u^{(j-1)}(\tau, \cdot)] \|_{L^{\infty}}
\|\big(g_j(\tau)(-\triangle)g_j(\tau)+I\big)^{-1}\tilde{u}\|_{L^2}\\
&\lesssim |t-\tau|^{3\beta_1}\|\tilde{u}\|_{L^2}
\end{split}\end{equation}
On the other hand, for the last term above, we use H\"{o}lder's inequality and $L^{2+}$-control: 
We have 
\begin{equation}\nonumber\begin{split}
&\|\chi_{|x|\lesssim |t-\tau|^{\beta_1}}([u^{(j-1)}(t, \cdot) - u^{(j-1)}(\tau, \cdot)] \big)\big(g_j(\tau)(-\triangle)g_j(\tau)+I\big)^{-1}\tilde{u}\|_{L^2}\\
&\lesssim  |t-\tau|^{\nu\beta_1}\|u^{(j-1)}(t, \cdot) - u^{(j-1)}(\tau, \cdot)\|_{L^{2+}}\|\chi_{|x|\lesssim 1}\big(g_j(\tau)(-\triangle)g_j(\tau)+I\big)^{-1}\tilde{u}\|_{L^\infty}\\
&\lesssim  |t-\tau|^{\nu\beta_1}\|u^{(j-1)}(t, \cdot) - u^{(j-1)}(\tau, \cdot)\|_{L^{2+}}\|\tilde{u}\|_{L^2}\lesssim |t-\tau|^{\nu\beta_1}\|\tilde{u}\|_{L^2}.
\end{split}\end{equation}
We conclude that in \eqref{eqn: mess0}, we have reduced to estimating the middle term on the right hand side, for which \eqref{eqn: 7} will come handy. Write 
\begin{equation}\label{eqn: 8}\begin{split}
&\chi_{|t-\tau|^{-\beta_1}\gtrsim |x|\gtrsim |t-\tau|^{\beta_1}}([u^{(j-1)}(t, \cdot) - u^{(j-1)}(\tau, \cdot)] \big)\big(g_j(\tau)(-\triangle)g_j(\tau)+I\big)^{-1}\tilde{u}\\
&=P_{<-\beta_2\log_2|t-\tau|}\big[\chi_{|t-\tau|^{-\beta_1}\gtrsim |x|\gtrsim |t-\tau|^{\beta_1}}([u^{(j-1)}(t, \cdot) - u^{(j-1)}(\tau, \cdot)] \big]
\\&\hspace{5cm}\cdot\tilde{\chi}_{|x|\lesssim |t-\tau|^{-\beta_1}}\big(g_j(\tau)(-\triangle)g_j(\tau)+I\big)^{-1}\tilde{u}\\
&+P_{\geq-\beta_2\log_2|t-\tau|}\big[\chi_{|t-\tau|^{-\beta_1}\gtrsim |x|\gtrsim |t-\tau|^{\beta_1}}([u^{(j-1)}(t, \cdot) - u^{(j-1)}(\tau, \cdot)] \big]
\\&\hspace{5cm}\cdot\tilde{\chi}_{|x|\lesssim |t-\tau|^{-\beta_1}}\big(g_j(\tau)(-\triangle)g_j(\tau)+I\big)^{-1}\tilde{u}\\
\end{split}\end{equation}
The second term on the right can be immediately estimated using \eqref{eqn: 7}, as well as the following bound resulting from standard elliptic estimates\footnote{Indeed, if $v = \big(g_j(\tau)(-\triangle)g_j(\tau)+I\big)^{-1}\tilde{u}$, then we have $\|g_j v\|_{H^1}\lesssim \|\tilde{u}\|_{L^2}$. By radiality, we obtain $|g_jv(r)|\lesssim r^{-1}\|\tilde{u}\|_{L^2}$, from which we get $\|\chi_{r\gtrsim 1}v\|_{L^\infty}\lesssim \|\tilde{u}\|_{L^2}$. Control over $\|\chi_{r\lesssim 1}v\|_{L^\infty}$ follows by standard elliptic estimates on the set $r\lesssim 1$.}:
\[
\|\tilde{\chi}_{|x|\lesssim |t-\tau|^{-\beta_1}}\big(g_j(\tau)(-\triangle)g_j(\tau)+I\big)^{-1}\tilde{u}\|_{L^{\infty}}
\lesssim \|\tilde{u}\|_{L^2}
\]
We thus obtain 
\begin{equation}\nonumber\begin{split}
&\|P_{\geq-\beta_2\log_2|t-\tau|}\big[\chi_{|t-\tau|^{-\beta_1}\gtrsim |x|\gtrsim |t-\tau|^{\beta_1}}([u^{(j-1)}(t, \cdot) - u^{(j-1)}(\tau, \cdot)] \big]
\\&\hspace{5cm}\cdot\tilde{\chi}_{|x|\lesssim |t-\tau|^{-\beta_1}}\big(g_j(\tau)(-\triangle)g_j(\tau)+I\big)^{-1}\tilde{u}\|_{L^2}\\
&\lesssim \|P_{\geq-\beta_2\log_2|t-\tau|}\big[\chi_{|t-\tau|^{-\beta_1}\gtrsim |x|\gtrsim |t-\tau|^{\beta_1}}([u^{(j-1)}(t, \cdot) - u^{(j-1)}(\tau, \cdot)] \big]\|_{L^2}
\\&\hspace{5cm}\cdot\|\tilde{\chi}_{|x|\lesssim |t-\tau|^{-\beta_1}}\big(g_j(\tau)(-\triangle)g_j(\tau)+I\big)^{-1}\tilde{u}\|_{L^\infty}\\
&\lesssim |t-\tau|^{\frac{\beta_2}{2}-\frac{13\beta_1}{4}}\|\tilde{u}\|_{L^2},
\end{split}\end{equation}
which yields the desired bound provided 
\[
\frac{13}{2}\beta_1<\beta_2
\]
We have now reduced to estimating the first term on the right hand side of \eqref{eqn: 8}, for which we use the equation satisfied by $u^{(j-1)}$. 
\\
We start out by estimating 
\[
P_{<-\beta_2\log_2|t-\tau|}\big[\chi_{|t-\tau|^{-\beta_1}\gtrsim |x|\gtrsim |t-\tau|^{\beta_1}}([u^{(j-1)}(t, \cdot) - u^{(j-1)}(\tau, \cdot)] \big]
\]
for which we claim the following 
\begin{lemma}\label{lem:red2}
we have the bound
\[
\|P_{<-\beta_2\log_2|t-\tau|}\big[\chi_{|t-\tau|^{-\beta_1}\gtrsim |x|\gtrsim |t-\tau|^{\beta_1}}([u^{(j-1)}(t, \cdot) - u^{(j-1)}(\tau, \cdot)] \big]\|_{L^\infty}\lesssim |t-\tau|^{1-\frac{7}{2}\beta_2}
\]
\end{lemma}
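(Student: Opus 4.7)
The plan is to combine Bernstein's inequality with the evolution equation \eqref{eqn:modelj} for $u^{(j-1)}$, exploiting the weighted derivative bounds \eqref{lem:bound2} together with the pointwise control \eqref{asymptoticU} on $(-\triangle)^{-1}u^{(j-2)}$. Denote $\chi \eqdef \chi_{|t-\tau|^{-\beta_1}\gtrsim |x|\gtrsim |t-\tau|^{\beta_1}}$ and $g \eqdef u^{(j-1)}(t,\cdot) - u^{(j-1)}(\tau,\cdot)$. Since $\chi g$ has compact support, Bernstein's inequality gives
\[
\|P_{<-\beta_2\log_2|t-\tau|}(\chi g)\|_{L^\infty} \lesssim |t-\tau|^{-3\beta_2}\,\|\chi g\|_{L^1},
\]
so everything reduces to an $L^1$ estimate of $\chi g$.

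To bound $\|\chi g\|_{L^1}$ I would integrate the equation \eqref{eqn:modelj}, writing
\[
\chi g = \chi \int_\tau^t \Big[(-\triangle)^{-1}u^{(j-2)}(s)\,\triangle u^{(j-1)}(s) + \alpha\bigl(u^{(j-2)}(s)\bigr)^2\Big]\,ds.
\]
The quadratic source contributes $\int_\tau^t \|u^{(j-2)}(s)\|_{L^2}^2\,ds \lesssim |t-\tau|\,D_3^2$ using \eqref{lem:bound1}. For the quasilinear term, Cauchy--Schwarz together with \eqref{asymptoticU} and \eqref{lem:bound2} yields
\[
\|\chi (-\triangle)^{-1}u^{(j-2)}(s)\,\triangle u^{(j-1)}(s)\|_{L^1} \lesssim \big\|\la x\ra (-\triangle)^{-1}u^{(j-2)}\big\|_{L^\infty}\,\|\chi \la x\ra^{-3/2}\|_{L^2}\,\|\la x\ra^{1/2}\triangle u^{(j-1)}\|_{L^2}.
\]
The first and third factors are bounded by $D_1$ and $D_4$ respectively, uniformly in $s\in[0,T)$.

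The only quantity sensitive to $|t-\tau|$ is the middle factor, and this is where the logarithmic loss producing the $7/2$ (rather than $3$) will arise. A direct radial computation gives
\[
\|\chi \la x\ra^{-3/2}\|_{L^2}^2 \lesssim 1 + \int_{1}^{|t-\tau|^{-\beta_1}} \frac{dr}{r} \lesssim 1 + |\log|t-\tau||,
\]
so combining everything one finds $\|\chi g\|_{L^1} \lesssim |t-\tau|\,(1 + |\log|t-\tau||)^{1/2}$. Plugging this into the Bernstein bound and absorbing the logarithm into a small power, $(1+|\log|t-\tau||)^{1/2} \lesssim |t-\tau|^{-\beta_2/2}$ for $|t-\tau|$ sufficiently small, yields
\[
\|P_{<-\beta_2\log_2|t-\tau|}(\chi g)\|_{L^\infty} \lesssim |t-\tau|^{-3\beta_2}\cdot |t-\tau|\cdot |t-\tau|^{-\beta_2/2} = |t-\tau|^{1-\frac{7}{2}\beta_2},
\]
which is exactly the claimed bound. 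The main technical point is the weighted $L^2$ integral producing the logarithmic factor; everything else is a clean application of the a priori controls already granted by the induction hypothesis in Lemma \ref{lem:bounds}.
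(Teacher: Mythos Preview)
Your proof is correct and takes a genuinely different, more direct route than the paper's. The paper performs a paraproduct-type decomposition, splitting $P_{<-\beta_2\log_2|t-\tau|}\bigl[\chi\,(u^{(j-1)}(t)-u^{(j-1)}(\tau))\bigr]$ according to the frequency of the difference factor and then bounding $\|P_{<l}[u^{(j-1)}(t)-u^{(j-1)}(\tau)]\|_{L^\infty}\lesssim |t-\tau|\,2^{7l/2}$ using only the $L^1\cap L^2$ control on the iterates (the $D_3$ bound) together with Bernstein and an integration by parts to handle the $\triangle u^{(j-1)}$ factor. You instead apply Bernstein once in the $L^1\to L^\infty$ direction and then bound $\|\chi g\|_{L^1}$ directly, invoking the weighted second-derivative bound $\|\la x\ra^{1/2}\triangle u^{(j-1)}\|_{L^2}\le D_4$ from \eqref{lem:bound2}; the logarithmic divergence of $\|\chi\la x\ra^{-3/2}\|_{L^2}$ is then absorbed into the extra half-power of $\beta_2$. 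This is cleaner and avoids all the Littlewood--Paley bookkeeping.

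There is, however, one structural cost worth flagging: your implicit constant depends on $D_4$, whereas the paper's depends only on $D_3$. In the paper's architecture this distinction matters. The time $T$ in Lemma~\ref{lem:bounds} is designed to depend only on $\|u_0\|_X$, $r_0$, and \eqref{initialNON}, \emph{not} on $\|\triangle\tilde u_0\|_{L^2}$; this is precisely what allows the global argument in the proof of Theorem~\ref{thm:Main} to cover $[0,T_{\max})$ by finitely many intervals of fixed length $T$ while the $D_4$-bound is propagated (and permitted to grow) across them. If the H\"older estimate of Section~\ref{sec:ce} picks up a $D_4$ dependence, then via Remark~\ref{rem: Holder} so does $T$, and that iteration no longer closes as written. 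So your argument proves Lemma~\ref{lem:red2} as stated, but slotting it into the paper's global scheme would require either reworking the dependence of $T$ in Lemma~\ref{lem:bounds} or reverting to an estimate that uses only the $D_3$-level information.
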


\begin{proof}(Lemma~\ref{lem:red2}) To see this, we use a bit of Littlewood-Paley calculus: write 
\begin{equation}\nonumber\begin{split}
&P_{<-\beta_2\log_2|t-\tau|}\big[\chi_{|t-\tau|^{-\beta_1}\gtrsim |x|\gtrsim |t-\tau|^{\beta_1}}([u^{(j-1)}(t, \cdot) - u^{(j-1)}(\tau, \cdot)] \big]\\
&=P_{<-\beta_2\log_2|t-\tau|}\big[\chi_{|t-\tau|^{-\beta_1}\gtrsim |x|\gtrsim |t-\tau|^{\beta_1}}P_{<-\beta_2\log_2|t-\tau|+10}[u^{(j-1)}(t, \cdot) - u^{(j-1)}(\tau, \cdot)] \big]\\
&+\sum_{l\geq -\beta_2\log_2|t-\tau|+10}P_{<-\beta_2\log_2|t-\tau|}\big[P_{I_l}\big(\chi_{|t-\tau|^{-\beta_1}\gtrsim |x|\gtrsim |t-\tau|^{\beta_1}}\big)P_{l}[u^{(j-1)}(t, \cdot) - u^{(j-1)}(\tau, \cdot)] \big],\\
\end{split}\end{equation}
where we let $P_{I_l}\eqdef \sum_{a\in [l-5, l+5]}P_a$. Thus we are led to bound the expressions 
\[
P_{<l}[u^{(j-1)}(t, \cdot) - u^{(j-1)}(\tau, \cdot)], l\geq 0,
\]
which we do by expanding 
\begin{equation}\nonumber\begin{split}
&P_{<l}[u^{(j-1)}(t, \cdot) - u^{(j-1)}(\tau, \cdot)]\\
& = \int_{\tau}^{t}P_{<l}\big[(-\triangle)^{-1}(u^{(j-2)})\triangle u^{(j-1)} +\alpha\big(u^{(j-2)}\big)^2\big]\,ds
\end{split}\end{equation}
But then using the fact that the operators $P_{<l}$ are given by smooth convolution kernels with bounded $L^1$-mass (independently of $l$), we easily infer 
\begin{equation}\nonumber\begin{split}
&\big|\int_{\tau}^{t}P_{<l}\big[(-\triangle)^{-1}(u^{(j-2)})\triangle u^{(j-1)} +\alpha\big(u^{(j-2)}\big)^2\big]\,ds\big|\\&
\lesssim |t-\tau| 2^{\frac{7}{2}l}[\|u^{(j-2)}\|_{L^1\cap L^2}^2 +\|u^{(j-1)}\|_{L^1\cap L^2}^2]\lesssim |t-\tau|2^{\frac{7}{2}l} 
\end{split}\end{equation}
Applying this above, we deduce the bound
\begin{equation}\nonumber\begin{split}
&\|P_{<-\beta_2\log_2|t-\tau|}\big[\chi_{|t-\tau|^{-\beta_1}\gtrsim |x|\gtrsim |t-\tau|^{\beta_1}}P_{<-\beta_2\log_2|t-\tau|+10}[u^{(j-1)}(t, \cdot) - u^{(j-1)}(\tau, \cdot)] \big]\|_{L^{\infty}}\\
&\lesssim |t-\tau|^{1-\frac{7}{2}\beta_2}
\end{split}\end{equation}
Furthermore, using that for $2^{l}>> |t-\tau|^{-\beta_1}$, we have for any $N\geq 1$
\[
\|P_{I_l}\big(\chi_{|t-\tau|^{-\beta_1}\gtrsim |x|\gtrsim |t-\tau|^{\beta_1}}\big)\|_{L^\infty}\lesssim_N \big[\frac{1}{|t-\tau|^{\beta_1}2^{l}}\big]^{N},
\]
we can estimate (for $l\geq-\beta_2\log_2|t-\tau|+10$)
\begin{equation}\nonumber\begin{split}
&\|P_{<-\beta_2\log_2|t-\tau|}\big[P_{I_l}\big(\chi_{|t-\tau|^{-\beta_1}\gtrsim |x|\gtrsim |t-\tau|^{\beta_1}}\big)P_{l}[u^{(j-1)}(t, \cdot) - u^{(j-1)}(\tau, \cdot)] \big]\|_{L^\infty}\\
&\lesssim |t-\tau|\big[\frac{1}{|t-\tau|^{\beta_1}2^{l}}\big]^{N} 2^{\frac{7}{2}l},
\end{split}\end{equation}
and summing over $l\geq-\beta_2\log_2|t-\tau|+10$ results in an upper bound (better than) $|t-\tau|^{1-\frac{7}{2}\beta_2}$, which establishes the proof of the above lemma
\end{proof}

We can estimate the first term on the right hand side of \eqref{eqn: 8} by 
\begin{equation}\nonumber\begin{split}
&\|P_{<-\beta_2\log_2|t-\tau|}\big[\chi_{|t-\tau|^{-\beta_1}\gtrsim |x|\gtrsim |t-\tau|^{\beta_1}}([u^{(j-1)}(t, \cdot) - u^{(j-1)}(\tau, \cdot)] \big]
\\&\hspace{5cm}\cdot\tilde{\chi}_{|x|\lesssim |t-\tau|^{-\beta_1}}\big(g_j(\tau)(-\triangle)g_j(\tau)+I\big)^{-1}\tilde{u}\|_{L^2}\\
&\lesssim\|P_{<-\beta_2\log_2|t-\tau|}\big[\chi_{|t-\tau|^{-\beta_1}\gtrsim |x|\gtrsim |t-\tau|^{\beta_1}}([u^{(j-1)}(t, \cdot) - u^{(j-1)}(\tau, \cdot)] \big]\|_{L^\infty}
\\&\hspace{5cm}\cdot\|\tilde{\chi}_{|x|\lesssim |t-\tau|^{-\beta_1}}\big(g_j(\tau)(-\triangle)g_j(\tau)+I\big)^{-1}\tilde{u}\|_{L^2}\\
&\lesssim |t-\tau|^{1-\frac{7}{2}\beta_2}\|\tilde{u}\|_{L^2}
\end{split}\end{equation}
whence we obtain the desired bound if $\beta_2<\frac{2}{7}$. 
\\

The expression \eqref{eq:red9}. Write this term as 
\begin{align*} &g_j(t)^{-2}[\nabla(-\triangle)^{-1}u^{(j-1)}(t, \cdot) - \nabla(-\triangle)^{-1}u^{(j-1)}(\tau, \cdot)]\\&\cdot [\nabla(-\triangle)^{-1}u^{(j-1)}(t, \cdot) + \nabla(-\triangle)^{-1}u^{(j-1)}(\tau, \cdot)]g_j^{-1}(\tau)\cdot\triangle^{-1}\big(g_j^{-1}(\tau)\cdot\big)\circ\Phi\\
&-g_j(t)(\nabla(-\triangle)^{-1}u^{(j-1)}(\tau, \cdot))^2[g_j(\tau)^{-3} - g_j(t)^{-3}]g_j^{-1}(\tau)\cdot\triangle^{-1}\big(g_j^{-1}(\tau)\cdot\big)\circ\Phi\\
\end{align*}
For the first term, use\footnote{This is seen by using the decomposition above \eqref{eq:red4_5}} 
\begin{align*}
&|\nabla(-\triangle)^{-1}u^{(j-1)}(t, x) - \nabla(-\triangle)^{-1}u^{(j-1)}(\tau, x)|\\&\lesssim \min\{|x|^{-\frac{3}{4}}, |x|^{-2}\}(|t-\tau|^{\frac{\beta}{4}} + |t-\tau|^{1-2\beta}),
\end{align*}
as well as 
\[
g_j(t)^{-2}[|\nabla(-\triangle)^{-1}u^{(j-1)}(t, \cdot)| + |\nabla(-\triangle)^{-1}u^{(j-1)}(\tau, \cdot)|]g_j(\tau)^{-1}\lesssim |x|^{-\frac{1}{2}},
\]
\[
\|\big(\triangle^{-1}\big(g_j^{-1}(\tau)\cdot\big)\circ\Phi\big)(\tilde{u})\|_{L^\infty} = \|g_j(\tau)\big(g_j(\tau)(-\triangle)g_j(\tau) + I\big)^{-1}\tilde{u}\|_{L^\infty}\lesssim \|\tilde{u}\|_{L^2},
\]
see the last footnote. We easily obtain the bound 
\begin{align*}
&\|g_j(t)^{-2}[\nabla(-\triangle)^{-1}u^{(j-1)}(t, \cdot) - \nabla(-\triangle)^{-1}u^{(j-1)}(\tau, \cdot)]\\&\cdot [\nabla(-\triangle)^{-1}u^{(j-1)}(t, \cdot) + \nabla(-\triangle)^{-1}u^{(j-1)}(\tau, \cdot)]g_j^{-1}(\tau)\cdot\triangle^{-1}\big(g_j^{-1}(\tau)\cdot\big)\circ\Phi\|\\
&\lesssim (|t-\tau|^{\frac{\beta}{4}} + |t-\tau|^{1-2\beta})
\end{align*}
which is as desired if $\beta<\frac{1}{2}$. For the second term above, we use \eqref{eq:red1}, which allows us to bound 
\begin{align*}
&\big|\big(g_j(t)(\nabla(-\triangle)^{-1}u^{(j-1)}(\tau, \cdot))^2[g_j(\tau)^{-3} - g_j(t)^{-3}]g_j^{-1}(\tau)\cdot\triangle^{-1}\big(g_j^{-1}(\tau)\cdot\big)\circ\Phi\big)(\tilde{u})\big|\\
&\lesssim \min\{|x|^{-1}, |x|^{-2}\}|t-\tau|^{\gamma}\|\tilde{u}\|_{L^2},
\end{align*}
from which we obtain the desired $L^2$-bound 
\begin{align*}
&\|\big(g_j(t)(\nabla(-\triangle)^{-1}u^{(j-1)}(\tau, \cdot))^2[g_j(\tau)^{-3} - g_j(t)^{-3}]g_j^{-1}(\tau)\cdot\triangle^{-1}\big(g_j^{-1}(\tau)\cdot\big)\circ\Phi\big)(\tilde{u})\|_{L^2}\\
&\lesssim |t-\tau|^{\gamma}\|\tilde{u}\|_{L^2}
\end{align*}

We have now estimated the fourth term of \eqref{eqn: 6}, which completes case \eqref{eqn:3c}. 

The estimates in \eqref{eqn:3a}-\eqref{eqn:3c} in turn establish the desired H\"{o}lder estimate \eqref{holderAest} 
for suitable $\gamma>0$. In particular, we have verified all the {\bf{key properties}} which ensure the existence of the {\it{fundamental solution}} $U(t, \tau)$ associated with $A(t)$.

\begin{remark}\label{rem: Holder}  We make the important observation that while the implicit constants in this section depend on the constants $D_j$, one can in fact make them independent of the $D_j$ by choosing the time interval sufficiently small (depending on the $D_j$). To see this, it suffices to shrink the H\"{o}lder exponent a bit. Indeed, assume we have a bound 
\[
\|[A(t) - A(\tau)]A^{-1}(\tau)]\|\leq C(D_1, D_2, D_3)|t-\tau|^{\gamma}
\]
Then by restricting $|t-\tau|\leq c(D_1, D_2, D_3, \gamma)$, we obtain 
\[
\|[A(t) - A(\tau)]A^{-1}(\tau)]\|\leq |t-\tau|^{\frac{\gamma}{2}}
\]

 This has the important consequence that all estimates flowing from Friedman's theory for the parametrix $U(t,s)$ are independent of the $D_j$ as well. 
\end{remark}

\subsection{Obtaining control over $u^{(j)}$}  \label{sec:control}
We re-formulate \eqref{eqn:tildemodelj} as an integral 
equation:
\[
\tilde{u}^{(j)}(t,x) = U(t, 0)\tilde{u}_0
+ 
\int_0^tU(t,s)\left[ - \frac{\partial_s g_j}{g_j}\tilde{u}^{(j)}(s, x) 
+ 
\alpha e^{s} \frac{g_{j-1}^2}{g_j}\left(\tilde{u}^{(j-1)}(s, x)\right)^2\right] ~ ds,
\]
which follows from Duhamel's formula.
Here we have $\tilde{u}_0 = \big[(-\triangle)^{-1}u_0\big]^{-\frac{1}{2}}u_0$. Note that the right hand side depends linearly on $\tilde{u}^{(j)}$, and we will apply a bootstrap argument to control this term.  Alternatively, one could run a secondary iteration to construct $\tilde{u}^{(j)}$, by replacing the $\tilde{u}^{(j)}$ on the left and right by $v^{(k)}, v^{(k-1)}$, respectively, and running an induction on $k$.  We shall prove $L^2$-based estimates for $\tilde{u}^{(j)}$, and then use a direct argument to establish the remaining $L^1, L^{2+}$-bounds.  In the immediately following, we shall derive an a priori bound on 
\begin{equation}
\|\tilde{u}^{(j)}\|_{Z}\eqdef \sup_{t\in [0, T]} \left[\|\tilde{u}^{(j)}(t)\|_{L^2(\threed)} + t^{\frac{1}{2}}\|\la x \ra^{-\frac{1}{2}}\nabla\tilde{u}^{(j)}(t)\|_{L^2(\threed)}\right], 
\label{Znorm}
\end{equation}
assuming inductively the following bound (for $D_5>0$)
\[
\|\tilde{u}^{(k)}\|_{Z}\leq D_5,\quad  k = 0, 1, \ldots, j-1,
\]
in addition to the remaining bounds stated in the Lemma \ref{lem:bounds}. This will partly take care of \eqref{lem:bound1}.

Observe that Sobolev's inequality gives 
\[
t^{\frac{1}{2}}\|\chi_{|x|\lesssim 1}u^{(j)}\|_{L^6(\threed)}\lesssim \|\tilde{u}^{(j)}\|_{Z}.
\]
Now we will estimate each of the terms individually.
\\

{\it{(1) Estimating the expression $U(t, 0)\tilde{u}_0$.}} Observe that due to radiality and monotonicity, as in \eqref{monEST} and \eqref{asymptoticU}, we get 
\[
\| \tilde{u}_0 \|_{L^2(\threed)}
\lesssim
\|\la x \ra^{\frac{1}{2}} u_0\|_{L^2(\threed)}
\lesssim \|u_0\|_{L^2(\threed)} +\|u_0\|_{L^1(\threed)}.
\]
Further, due to the $L^2(\threed)$-boundedness of $U(t, 0)$, we achieve
\[
\|U(t, 0)\tilde{u}_0\|_{L^2}\lesssim \|\tilde{u}_0\|_{L^2}\lesssim \|u_0\|_{L^2} +\|u_0\|_{L^1}.
\]
According to Remark~\ref{rem: Holder}, the implied constant here may be assumed to be independent of the $D_j$ ( at the cost of choosing $T$ small enough). 
Also, due to the operator bound $\|A(t)U(t, 0)\|\lesssim \frac{1}{t}$ and an interpolation type argument, 
we get 
\[
\| \la x \ra^{-\frac{1}{2}}\nabla U(t, 0)\tilde{u}_0\|_{L^2}\lesssim \frac{1}{t^{\frac{1}{2}}}\|\tilde{u}_0\|_{L^2},
\]
whence 
$
\|U(t, 0)\tilde{u}_0\|_{Z}\lesssim \|u_0\|_{L^2(\threed)} + \|u_0\|_{L^1(\threed)}.
$
Indeed, observe that 
\begin{equation}\nonumber\begin{split}
\la g_j\nabla U(t,0)\tilde{u}_0,\,g_j\nabla U(t,0)\tilde{u}_0\ra  =  &\la [g_j, \nabla] U(t,0)\tilde{u}_0, \,g_j\nabla U(t,0)\tilde{u}_0\ra\\& + \la g_j \nabla U(t,0)\tilde{u}_0,\,[g_j, \nabla] U(t,0)\tilde{u}_0\ra\\
&+ \la [\nabla, g_j] U(t,0)\tilde{u}_0,\,[g_j, \nabla] U(t,0)\tilde{u}_0\ra\\
&+\la \nabla g_j U(t,0)\tilde{u}_0, \nabla g_j U(t,0)\tilde{u}_0\ra,
\end{split}\end{equation}
and we easily get 
\[
\big|\la [g_j, \nabla] U(t,0)\tilde{u}_0, \,g_j\nabla U(t,0)\tilde{u}_0\ra\big|\leq \|[g_j, \nabla]\|_{L^\infty}\|U(t,0)\tilde{u}_0\|_{L^2}\|g_j\nabla U(t,0)\tilde{u}_0\|_{L^2}
\]
\[
\big| \la [\nabla, g_j] U(t,0)\tilde{u}_0,\,[g_j, \nabla] U(t,0)\tilde{u}_0\ra\big|\leq \|[\nabla, g_j]\|_{L^\infty}^2\|U(t,0)\tilde{u}_0\|_{L^2}^2
\]
One concludes from the preceding that 
\[
\la g_j\nabla U(t,0)\tilde{u}_0,\,g_j\nabla U(t,0)\tilde{u}_0\ra\lesssim \|[\nabla, g_j]\|_{L^\infty}^2\|\tilde{u}_0\|_{L^2}^2 + \|A(t)U(t,0)\tilde{u}_0\|_{L^2}\|\tilde{u}_0\|_{L^2}
\]
According to Remark~\ref{rem: Holder}, the implied constant in this inequality is independent of the $D_j$;  furthermore, we get 
\[
\|A(t)U(t,0)\tilde{u}_0\|_{L^2}\|\tilde{u}_0\|_{L^2}\lesssim \|\tilde{u}_0\|_{L^2}^2
\]
where the implied constant may be assumed independent of the $D_j$. But then we get 
\[
\|g_j\nabla U(t,0)\tilde{u}_0\|_{L^2}\lesssim t^{-\frac{1}{2}}\|\tilde{u}_0\|_{L^2}[1 + t^{\frac{1}{2}} \|[\nabla, g_j]\|_{L^\infty}]
\]
Finally, we have 
\[
g_j\gtrsim \frac{1}{\la x\ra^{\frac{1}{2}}}
\]
where, using an argument as in \eqref{eqn: conttrick}, we may assume that the implied constant is independent of the $D_j$ on $[0, T]$ for $T$ small enough. We infer 
\[
t^{\frac{1}{2}}\|\la x\ra^{-\frac{1}{2}}\nabla U(t, 0)\tilde{u}_0\|_{L^2}\lesssim \|\tilde{u}_0\|_{L^2},
\]
with implied constant independent of the $D_j$. 
\\

{\it{(2) Estimating the term $\int_0^t U(t, s)\frac{\partial_s g_j}{g_j}\tilde{u}^{(j)}\,ds$}}.  Recalling the definition of $g_j$, we have $\frac{\partial_s g_j}{g_j} = \frac{1}{2}\frac{(-\triangle)^{-1}\partial_s u^{(j-1)}}{(-\triangle)^{-1}u^{(j-1)}}$. 
Then we use the equation \eqref{eqn:modelj} satisfied for $u^{(j-1)}$ to see that what we need to do is estimate 
\[
\int_0^t U(t, s)
\left\{\frac{(-\triangle)^{-1}\big[(-\triangle)^{-1}u^{(j-2)}\triangle u^{(j-1)} + \alpha \big(u^{(j-2)}\big)^2\big]}{(-\triangle)^{-1}u^{(j-1)}}\tilde{u}^{(j)}  \right\}(s, x) ~ ds,
\]
where $U(t, s)$ acts on the entire expression to its right, of course. To estimate the integrand observe that by Newton's formula \eqref{NewtonFormula3D} 
we obtain
\begin{equation}\nonumber\begin{split}
&(-\triangle)^{-1}\big[(-\triangle)^{-1}u^{(j-2)}\triangle u^{(j-1)}\big]\\
&=-\frac{1}{4\pi |x|}\int_{|y|\leq|x|}\nabla(-\triangle)^{-1}u^{(j-2)}\cdot \nabla u^{(j-1)}\,dy\\
&-\int_{|y|>|x|}\nabla\left[\frac{1}{4\pi |y|}(-\triangle)^{-1}u^{(j-2)}\right]\cdot \nabla u^{(j-1)}\,dy.\\
\end{split}\end{equation}
Furthermore we have 
\begin{equation}\nonumber\begin{split}
\|\frac{\la x \ra}{|x|}\int_{|y|\leq|x|}\nabla(-\triangle)^{-1}u^{(j-2)}\cdot \nabla u^{(j-1)}\,dy\|_{L^{\infty}}\lesssim \|\nabla u^{(j-1)}\|_{L^2}[\|u^{(j-2)}\|_{L^2}+\|u^{(j-2)}\|_{L^1}] ,
\end{split}\end{equation}
\begin{equation}\nonumber\begin{split}
&\|\la x \ra \min\{|x|^{\frac{1}{2}+},1\}\int_{|y|>|x|}\nabla\big[\frac{1}{|y|}(-\triangle)^{-1}u^{(j-2)}\big]\cdot \nabla u^{(j-1)}\,dy\|_{L^\infty}\\
&\lesssim \|\nabla u^{(j-1)}\|_{L^2}[\|u^{(j-2)}\|_{L^2} +\|u^{(j-2)}\|_{L^{1}}],
\end{split}\end{equation}
as follows by a straightforward application of H\"{o}lder's inequality. Furthermore, we have 
\[
\|\la x \ra(-\triangle)^{-1}\big(u^{(j-2)}\big)^2\|_{L^{\infty}}\lesssim \|u^{(j-2)}\|_{L^2\cap L_{|x|\lesssim1}^4}^2.
\]
Using the bounds \eqref{boundUinv} for $u^{(j-1)}$ we infer 
\begin{equation}\nonumber\begin{split}
&\|\frac{(-\triangle)^{-1}\big[(-\triangle)^{-1}u^{(j-2)}\triangle u^{(j-1)} + \alpha \big(u^{(j-2)}\big)^2\big]}{(-\triangle)^{-1}u^{(j-1)}}\tilde{u}^{(j)}(s, \cdot)\|_{L^2}\\&\lesssim t^{-\frac{7}{8}-}[\|\tilde{u}^{(j)}\|_{L^2} + t^{\frac{1}{2}}\|\chi_{|x|\lesssim1}\tilde{u}^{(j)}\|_{L^6}]\big[t^{\frac{1}{2}}\|\nabla u^{(j-1)}\|_{L^2}\big]\|u^{(j-2)}\|_{L^2\cap L^1}\\
&+t^{-\frac{3}{4}}[\|u^{(j-2)}\|_{L^2}+t^{\frac{1}{2}}\|\chi_{|x|\lesssim1}u^{(j-2)}\|_{L^6}]^2\|\tilde{u}^{(j)}\|_{L^2}.
\end{split}\end{equation}
We conclude that 
\begin{equation}\nonumber\begin{split}
&\|\int_0^t U(t, s)\frac{(-\triangle)^{-1}\big[(-\triangle)^{-1}u^{(j-2)}\triangle u^{(j-1)} + \alpha \big(u^{(j-2)}\big)^2\big]}{(-\triangle)^{-1}u^{(j-1)}}\tilde{u}^{(j)}(s, \cdot)\,ds\|_{L^2}\\
&\lesssim \big(\int_0^t s^{-(1-)}\,ds\big) \|\tilde{u}^{(j)}\|_{Z}\big[\sum_{k=j-2}^{j-1}\|\tilde{u}^{(k)}\|_{Z\cap L^1}^2\big],\,t\in [0, T],
\end{split}\end{equation}
and further 
\begin{equation}\nonumber\begin{split}
&t^{\frac{1}{2}}\|\la x \ra^{-\frac{1}{2}}\nabla\int_0^t U(t, s)\frac{(-\triangle)^{-1}\big[(-\triangle)^{-1}u^{(j-2)}\triangle u^{(j-1)} + \alpha \big(u^{(j-2)}\big)^2\big]}{(-\triangle)^{-1}u^{(j-1)}}\tilde{u}^{(j)}(s, \cdot)\,ds\|_{L^2}\\
&\lesssim \big(t^{\frac{1}{2}}\int_0^t (t-s)^{-\frac{1}{2}}s^{-(1-)}\,ds\big)\|\tilde{u}^{(j)}\|_{Z}\big[\sum_{k=j-2}^{j-1}\|\tilde{u}^{(k)}\|_{Z\cap L^1}^2\big],\,t\in [0,T].
\end{split}\end{equation}
Choosing $T$ small enough, we then obtain 
\begin{equation}\nonumber\begin{split}
\|\int_0^t U(t, s)\frac{(-\triangle)^{-1}\big[(-\triangle)^{-1}u^{(j-2)}\triangle u^{(j-1)} + \alpha \big(u^{(j-2)}\big)^2\big]}{(-\triangle)^{-1}u^{(j-1)}}\tilde{u}^{(j)}(s, \cdot)\,ds\|_{Z}\ll \|\tilde{u}^{(j)}\|_{Z}.
\end{split}\end{equation}
This is the desired estimate for $\int_0^t U(t, s)\frac{\partial_s g_j}{g_j}\tilde{u}^{(j)}\,ds$.

{\it{(3) Estimating the term $\int_0^t U(t, s) e^{s} \frac{g_{j-1}^2}{g_j}  \big(\tilde{u}^{(j-1)}\big)^2\,ds$}}. 
In this case, we split the integrand into two parts: 
\[
e^{s} \frac{g_{j-1}^2}{g_j}  \big(\tilde{u}^{(j-1)}\big)^2 
= 
\chi_{|x|\lesssim1}
e^{s} \frac{g_{j-1}^2}{g_j}  \big(\tilde{u}^{(j-1)}\big)^2
+  
\chi_{|x|\gtrsim1}
e^{s} \frac{g_{j-1}^2}{g_j}  \big(\tilde{u}^{(j-1)}\big)^2.
\]
Recall that from \eqref{asymptoticU}, we have $e^{s} \frac{g_{j-1}^2}{g_j}\tilde{u}^{(j-1)}\approx e^s g_{j-1}\tilde{u}^{(j-1)} = u^{(j-1)}$. 
Hence recalling radiality and monotonicity, we have  
\[
\|\chi_{|x|\gtrsim1}\frac{g_{j-1}^2}{g_j} e^{2s}\big(\tilde{u}^{(j-1)}\big)^2\|_{L^2}\lesssim \|\tilde{u}^{(j-1)}\|_{L^2}\|u^{(j-1)}\|_{L^1}
\]
From here we easily obtain (here the implied constant may depend on the $D_j$)
\[
\|\int_0^t U(t, s) \chi_{|x|\gtrsim1}\frac{g_{j-1}^2}{g_j} e^{2s}\big(\tilde{u}^{(j-1)}\big)^2\,ds\|_{Z}\lesssim T[\|\tilde{u}^{(j-1)}\|_{Z}+\|u^{(j-1)}\|_{L^1}]^2
\]
and we can make this $\ll D_3+D_5$ (as in the statement of Lemma \ref{lem:bounds}) by picking $T$ small enough.

In the regime of ${|x| \lesssim 1}$, we apply the H\"{o}lder inequality to achieve
\[
\|\chi_{|x|\lesssim1}\frac{g_{j-1}^2}{g_j} e^{2s}\big(\tilde{u}^{(j-1)}(s, \cdot)\big)^2\|_{L^2}\lesssim\|\chi_{|x|\lesssim 1}u^{(j-1)}(s, \cdot)\|_{L^4}^2
\lesssim s^{-\frac{3}{4}}\|\tilde{u}^{(j-1)}\|_{Z}^2.
\]
This also uses Sobolev's embedding, whence we obtain (for $T$ small enough, depending on the $D_j$)
\[
\|\int_{0}^tU(t, s)\chi_{|x|\lesssim1}\frac{g_{j-1}^2}{g_j} e^{2s}\big(\tilde{u}^{(j-1)}(s, \cdot)\big)^2\,ds\|_{Z}\lesssim T^{\frac{1}{4}}\|\tilde{u}^{(j-1)}\|_{Z}^2,\quad t\in [0, T].
\]
This completes the last desired estimate.  

By combining the last three estimates, {\it{(ii1)}} - {\it{(ii3)}}, we obtain 
\[
\|\tilde{u}^{(j)}\|_{Z}\leq \frac{1}{2}\|\tilde{u}^{(j)}\|_{Z} + C_1T^{\frac{1}{4}}[D_3+D_5]^2 +C_2\|u_0\|_{L^1\cap L^2}.
\]
Thus if we pick $D_{3} \gg D_5$ suitably large with respect to $\|u_0\|_{L^1\cap L^2}$ and then $T$ small enough, we recover the bound 
\[
\|\tilde{u}^{(j)}\|_{Z}<D_5,
\]
and via Sobolev's embedding, this of course also gives 
\[
\max_{t\in [0, T]}t^{\frac{1}{2}}\|\chi_{|x|\lesssim 1}u^{(j)}(t, \cdot)\|_{L^6}< D_3.
\]
Thus to complete the deduction of the bounds for the un-differentiated $u^{(j)}$ and hence establishing \eqref{lem:bound1}, we only need to recover the $L^1(\threed)$ and  $L^{2+}(\threed)$-bounds. 

For the $L^1(\threed)$ bounds we revert to the original equation for $u^{(j)}$ as in \eqref{eqn:modelj}.
Integrating over $\threed$ and by parts, we obtain
\[
\partial_t\int_{\threed}u^{(j)}\,dx = -\int_{\threed}u^{(j-1)}u^{(j)}\,dx + \alpha\int_{\threed}\big(u^{(j-1)}\big)^2\,dx,
\]
whence we have 
\[
\int_{\threed}u^{(j)}(t, \cdot)\,dx\leq \int_{\threed}u_0\,dx + \alpha T D_3^2,
\]
from which the desired $L^1(\threed)$ bound follows easily for $T$ small enough. 

Next, we study the a priori bound in ${L^{2+}(\threed)}$. Writing $2+ = 2+\delta$, 
we obtain
\begin{equation}\nonumber\begin{split}
\int_{\threed}\partial_t u^{(j)}\big(u^{(j)}\big)^{1+\delta}\,dy 
= &
\int_{\threed}(-\triangle)^{-1}u^{(j-1)}\nabla \cdot \left[\nabla u^{(j)}\big(u^{(j)}\big)^{1+\delta}\right]\,dy
\\
&
-\left(1+\delta \right) \int_{\threed}(-\triangle)^{-1}u^{(j-1)}|\nabla u^{(j)}|^2\big(u^{(j)}\big)^{\delta}\,dy\\
&+\alpha\int_{\threed}\big(u^{(j-1)}\big)^2\big(u^{(j)}\big)^{1+\delta}
\\
& \leq -\frac{1}{2+\delta}\int_{\threed}\nabla(-\triangle)^{-1}u^{(j-1)}\cdot\nabla \big(u^{(j)}\big)^{2+\delta}\,dy\\
&+\alpha\int_{\threed}\big(u^{(j-1)}\big)^2\big(u^{(j)}\big)^{1+\delta}\leq \alpha\int_{\threed}\big(u^{(j-1)}\big)^2\big(u^{(j)}\big)^{1+\delta}.
\end{split}\end{equation}
We have used
$$
-\int_{\threed}\nabla(-\triangle)^{-1}u^{(j-1)}\cdot\nabla \big(u^{(j)}\big)^{2+\delta}\,dy
=
-\int_{\threed} u^{(j-1)}  \big(u^{(j)}\big)^{2+\delta}\,dy
\le 0.
$$
We obtain  
\begin{equation}\nonumber\begin{split}
\int_{\threed}\big(u^{(j)}\big)^{2+\delta}(t, y)\,dy&\lesssim \int_{\threed}u_0^{2+\delta}(y)\,dy + \int_0^t\int_{\threed}\big(u^{(j-1)}\big)^2\big(u^{(j)}\big)^{1+\delta}\,dyds.
\end{split}\end{equation}
In order to estimate the second integral we split the integrand as
\begin{equation}\nonumber\begin{split}
\big(u^{(j-1)}\big)^2\big(u^{(j)}\big)^{1+\delta}
=
\chi_{|x|\gtrsim 1}\big(u^{(j-1)}\big)^2\big(u^{(j)}\big)^{1+\delta}
 + 
\chi_{|x|\lesssim 1}\big(u^{(j-1)}\big)^2\big(u^{(j)}\big)^{1+\delta}.
\end{split}\end{equation}
For the first term, using the monotonicity as in \eqref{monEST}, 
and interpolation
we get 
\[
\int_0^t\int_{\threed}\chi_{|x|\gtrsim 1}\big(u^{(j-1)}\big)^2\big(u^{(j)}\big)^{1+\delta}\,dyds\leq t\|\chi_{|x|\gtrsim 1}\big(u^{(j-1)}\big)^2\|_{L^\infty}\|u^{(j)}\|_{L^1\cap L^2}^{1+\delta}\ll D_3,
\]
provided we choose $T$ small enough. For the second term, we use H\"{o}lder to obtain 
\[
 \int_0^t\int_{\threed}\chi_{|x|\lesssim 1}\big(u^{(j-1)}\big)^2\big(u^{(j)}\big)^{1+\delta}\,dyds\lesssim \int_0^t\|\chi_{|x|\lesssim 1}u^{(j-1)}\|_{L^{3+\delta}}^2\|\chi_{|x|\lesssim 1}u^{(j)}\|_{L^{3+\delta}}^{1+\delta}\,ds.
 \]
 But again by H\"{o}lder, we have (where $k$ is either $j$ or $j-1$)
 \[
 \|\chi_{|x|\lesssim 1}u^{(k)}\|_{L^{3+\delta}}\lesssim  \|\chi_{|x|\lesssim 1}u^{(k)}\|_{L^2}^{1-\gamma}
  \|\chi_{|x|\lesssim 1}u^{(k)}\|_{L^6}^{\gamma},\,\gamma = 3\left(\frac{1}{2}-\frac{1}{3+\delta}\right).
 \]
Then with the inductive bounds for $u^{(j-1)}$, as well as the established bounds 
for $\tilde{u}^{(j)}$, we achieve
 \[
  \int_0^t\int_{\threed}\chi_{|x|\lesssim 1}\big(u^{(j-1)}\big)^2\big(u^{(j)}\big)^{1+\delta}\,dyds\lesssim\int_0^t s^{-\frac{3(1+\delta)}{4}}\,ds\ll D_3,
\]
provided we choose $T$ small enough. This establishes the $L^{2+}(\threed)$-bound. 

\subsection{Monotonicity of $u^{(j)}$}  \label{sec:mon}
The maximum principle implies that $u^{(j)}>0$ since it solves \eqref{eqn:modelj}. In the rest of this section, we will prove that $u^{(j)}$ is non-increasing.  We apply $x\cdot\nabla_x = r\partial_r$, $r=|x|$, to the equation \eqref{eqn:modelj} to obtain 
\begin{gather}\nonumber
(-\triangle)^{-1}u^{(j-1)}\left\{(x\cdot\nabla_x)\triangle u^{(j)}\right\} 
+ 
\left\{ (x\cdot\nabla_x)(-\triangle)^{-1}u^{(j-1)}\right\}
\triangle u^{(j)} - \partial_t (x\cdot\nabla_x)u^{(j)}\\
 = -2\left\{ \alpha(x\cdot\nabla_x)u^{(j-1)}\right\} u^{(j-1)}.
 \notag
\end{gather}
Then we look at the commutator, $[A,B]=AB - BA$, as follows
\begin{equation}\nonumber\begin{split}
\triangle u &= \frac{1}{r}\partial_r(r\partial_r u) + \frac{1}{r}\partial_r u,
\\
[x\cdot\nabla_x, \triangle] u &=  -\frac{2}{r}\partial_r(r\partial_r u) - \frac{2}{r}\partial_r u.
\end{split}\end{equation}
Furthermore, due to radiality of $u^{(j-1)}$, we have 
from \eqref{NewtonFormula3D} that
\[
(x\cdot\nabla_x)(-\triangle)^{-1}u^{(j-1)} = -\frac{1}{4\pi r}\int_{|y|\leq|x|}u^{(j-1)}(t,y) dy.
\]
We collect these last few calculations to obtain 
\begin{equation}\nonumber\begin{split}
&(-\triangle)^{-1}u^{(j-1)}\triangle \left\{(x\cdot\nabla_x)u^{(j)}\right\} - (-\triangle)^{-1}u^{(j-1)}
\left\{\frac{2}{r}\partial_r(r\partial_r u^{(j)}) + \frac{2}{r}\partial_r u^{(j)}\right\}
\\
&
 - \left(\frac{1}{4\pi r}\int_{|y|\leq|x|}u^{(j-1)}dy\right)
 \left\{\frac{1}{r}\partial_r(r\partial_r u^{(j)}) + \frac{1}{r}\partial_r u^{(j)}\right\} 
 - \partial_t (x\cdot\nabla_x)u^{(j)}
 \\
& = -2\left\{ \alpha(x\cdot\nabla_x)u^{(j-1)}\right\}u^{(j-1)}.
\end{split}\end{equation}
Here the key feature is that the coefficient of $z = (r\partial_r)u^{(j)}$ in the above is strictly negative, while the right hand side is non-negative by assumption. Now by the maximal principle, the solution of  
\begin{equation}\nonumber\begin{split}
&(-\triangle)^{-1}u^{(j-1)}\triangle z -  (-\triangle)^{-1}u^{(j-1)} 
\left\{\frac{2}{r}\partial_r z + \frac{2}{r^{2}}z\right\} 
\\&-  \big(\frac{1}{4\pi r}\int_{|y|\leq|x|}u^{(j-1)}dy\big)
 \left\{\frac{1}{r}\partial_r z + \frac{1}{r^2}z\right\} 
  -\partial_t z
  \\
 &= -2\alpha  \left\{ (x\cdot\nabla_x)u^{(j-1)} \right\}  u^{(j-1)}, 
\end{split}\end{equation}
on $B_R = \{x\in \threed ~| ~ |x|\leq R\}$ with initial data
$$
z(0, x) =(r\partial_r)\big[\chi_{|x|\leq \frac{R}{2}}(\phi_R* u_0)\big],
$$
and boundary conditions $z(t, \cdot)|_{\partial B_R} = 0$, 
 where $\phi_R$ is a standard mollifier with $\lim_{R\to\infty}\phi_R*u_0 = u_0$
 and $\chi_{|x|\leq \frac{R}{2}}$ is a smooth truncation of the indicated region, 
 cannot attain a positive maximum. 
Hence the solution $\tilde{u}^{(j), R}$ of the problem 
\[
\partial_t\tilde{u}^{(j), R} = (-\triangle)^{-1}u^{(j-1)}\triangle \tilde{u}^{(j), R} + \alpha\big(u^{(j-1)}\big)^2,
\quad 
\tilde{u}^{(j), R}(0, \cdot) = \chi_{|x|\leq \frac{R}{2}}(\phi_R* u_0),
\]
with vanishing boundary values on $\partial B_R$ is non-increasing. 

A simple limiting argument, using analogous bounds to those obtained in the preceding subsections, then shows that $u^{(j)}$, being the limit of a sequence of non-increasing functions, is itself non-increasing. 

\subsection{Controlling the elliptic operator: lower bound for $(-\triangle)^{-1}u^{(j)}$.}\label{sec:ELLcontrol}
In this section we will prove the lower bound from
\eqref{boundUinv} for $u = u^{(j)}$ 
as in
\begin{equation}\label{eqn:controlbelow}
(-\triangle)^{-1}u^{(j)} > \frac{D_2}{\la x \ra}.
\end{equation}
We begin with the assumption $A_0\eqdef\int_{r_0<|x|<r_0^{-1}}u_0(x)\,dx>0$.   Now choose a smooth cutoff function 
$
\chi(r)\in C_0^{\infty}(\R_{>0})
$ 
which satisfies ($0<r_0<1$):
\begin{equation}
\notag
\chi(r)
=
\left\{
\begin{array}{cc}
0, & r\leq \frac{r_0}{2}, \quad \text{or} \quad r\geq 2r_0^{-1},
\\
1, & r \in [r_0, r_0^{-1}].
\end{array}
\right.
\end{equation}
We may furthermore suppose that $\chi$ satisfies 
\begin{equation}
\notag
\chi(r)
=
\left\{
\begin{array}{cc}
e^{-\frac{1}{r-\frac{r_0}{2}}}, & r \in [\frac{r_0}{2}, \frac{3}{4}r_0],
\\
e^{-\frac{1}{2r_0^{-1}-r}}, & r \in [\frac{5}{4}r_0^{-1}, 2r_0^{-1}].
\end{array}
\right.
\end{equation}
We abuse notation to write $\chi(|y|) = \chi(y)$ for $y\in\threed$.
Then consider the function 
\[
f(t)\eqdef \int_{\threed}~\chi(y)~ u^{(j)}(t, y)\,dy
=
\int_{\frac{r_0}{2} < |y| < \frac{2}{r_0}  }~\chi(y)~ u^{(j)}(t, y)\,dy.
\]
We compute 
\begin{equation}\nonumber\begin{split}
f'(t) = &\int_{\threed}\chi(y)\big[(-\triangle)^{-1}u^{(j-1)}\triangle u^{(j)} +\alpha \big(u^{(j-1)}\big)^2\big]\,dy\\
 =& \int_{\threed}\chi(y)\big[\alpha \big(u^{(j-1)}\big)^2-u^{(j-1)}u^{(j)}\big]\,dy
 \\
&+ 2\int_{\threed}\nabla_y\chi(y)\cdot \nabla_y(-\triangle)^{-1}u^{(j-1)}u^{(j)}\,dy\\
&+ \int_{\threed}\triangle_y\chi(y)(-\triangle)^{-1}u^{(j-1)}u^{(j)}\,dy.
\end{split}\end{equation}
By our choice of $\chi$, and the positivity of $u^{(j-1)}$, $u^{(j)}$, 
we have 
\[
\triangle_y\chi(y)(-\triangle)^{-1}u^{(j-1)}u^{(j)}+2\nabla_y\chi(y)\cdot \nabla_y(-\triangle)^{-1}u^{(j-1)}u^{(j)}>0,
\]
for $\big||y|-\frac{r_0}{2}\big|\ll1$, $\big||y|-\frac{2}{r_0}\big|\ll 1$; indeed, use that $\triangle_y = \partial_r^2 + \frac{2}{r}\partial_r$ in the radial context and hence for $\big||y| - \frac{r_0}{2}\big|\ll 1$ we have $\triangle_y\chi(y) = \frac{1}{(r-\frac{r_0}{2})^4}e^{-\frac{1}{r-\frac{r_0}{2}}} + O(\frac{1}{(r-\frac{r_0}{2})^3}e^{-\frac{1}{r-\frac{r_0}{2}}})$, $\nabla_y\chi(y) = O(\frac{1}{(r-\frac{r_0}{2})^2}e^{-\frac{1}{r-\frac{r_0}{2}}})$.   Further, we have 
\[
\big|\triangle_y\chi(y)\big| +\big|\nabla_y\chi(y)|\lesssim_{r_0, \delta}\chi(y),
\]
for $|y|\in [\frac{r_0}{2}+\delta, \frac{2}{r_0}-\delta]$ for some small $\delta >0$.

Using the radiality and monotonicity of $u^{(j)}$ and $u^{(j-1)}$ as in \eqref{monEST}, we then conclude that 
\[
f'(t) \geq -C(r_0, D_3)f(t),\quad f(0)\geq A_0,
\]
whence we get 
\[
f(t)\geq e^{-C(r_0, D_3)T}A_0,\quad t\in [0, T].
\]
In particular, from \eqref{NewtonFormula3D} we get 
\[
(-\triangle)^{-1}u^{(j)}(t, x)\geq \frac {1}{4\pi |x|}e^{-C(r_0, D_3)T}A_0,\,|x|>2r_0^{-1},\,t\in [0, T].
\]
Also, by monotonicity of $(-\triangle)^{-1}u^{(j)}$ with respect to $|x|$, 
we  get 
\[
(-\triangle)^{-1}u^{(j)}(t, x)\geq \frac {1}{4\pi \la x \ra}\frac{r_0}{2}e^{-C(r_0, D_3)T}A_0,\quad |x|\leq 2r_0^{-1},\,t\in [0,T].
\]
Note that the factor $\la x \ra$ in the denominator is not needed in this last lower bound.
We can thus recover the bound \eqref{eqn:controlbelow} provided we have 
\[
D_2< \frac {1}{4\pi}\frac{r_0}{2}e^{-C(r_0, D_3)T}A_0.
\]
This concludes our proof of \eqref{eqn:controlbelow}.

\begin{remark}\label{rem: lowerbound} The preceding proof reveals that in fact $D_2$ can be chosen to be depend only on $r_0$, $\int_{r_0<|x|<r_0^{-1}}u(x)\,dx$, $T$, due to the monotonicity properties of $u$. 
\end{remark}

\subsection{Controlling the elliptic operator: upper bound for $(-\triangle)^{-1}u^{(j)}$.}\label{sec:UpperELLcontrol} Here we prove the bound $(-\triangle)^{-1}u^{(j)}< D_1$. Note that formula \eqref{NewtonFormula3D} implies that 
\[
(-\triangle)^{-1}u^{(j)}\lesssim \|u^{(j)}\|_{L^2\cap L^1}<D_3\ll D_1,
\]
provided we pick $D_1\gg D_3$.

\subsection{Higher derivative bounds} \label{sec:higherDb}
Here we prove the bounds on $\nabla^{\alpha}u^{(j)}$, $0\leq|\alpha|\leq 2$, claimed in Lemma \ref{lem:bounds}, and in particular \eqref{lem:bound2}. Our point of departure is again the integral identity
\[
\tilde{u}^{(j)} = U(t, 0)\tilde{u}_0 + \int_0^tU(t,s)\big[ - \frac{\partial_s g_j}{g_j}\tilde{u}^{j}(s, \cdot) + \alpha \frac{g_{j-1}^2}{g_j} e^{s}\big(\tilde{u}^{(j-1)}(s, \cdot)\big)^2\big]\,ds,
\]
whence we get 
\begin{equation}\label{eqn: Deltau}
\triangle\tilde{u}^{(j)} = \triangle U(t, 0)\tilde{u}_0 + \int_0^t \triangle U(t,s)\big[ - \frac{\partial_s g_j}{g_j}\tilde{u}^{j})(s, \cdot) + \alpha \frac{g_{j-1}^2}{g_j} e^{s}\big(\tilde{u}^{(j-1)}(s, \cdot)\big)^2\big]\,ds,
\end{equation}
We start with the linear term $v\eqdef \triangle U(t, 0)\tilde{u}_0 $. Note that $v$ satisfies the equation 
\[
\partial_t v = \triangle\big(g_j\triangle g_j\big) u
\]
where we put $u = U(t, 0)\tilde{u}_0$. Expanding this out, we obtain 
\[
\partial_t v = g_j\triangle (g_j v) + (\triangle g_j)\triangle (g_j u)+ 2\nabla g_j\cdot\nabla\triangle(g_j u)
+2g_j\triangle(\nabla g_j\cdot\nabla u) + g_j\triangle(\triangle g_j u),
\]
whence 
\begin{equation}\nonumber\begin{split}
v(t, \cdot)
 &= U(t, 0)v_0\\& + \int_0^t U(t, s)\big[(\triangle g_j)\triangle g_j u+2\nabla g_j\cdot\nabla\triangle(g_j u)
+2g_j\triangle(\nabla g_j\cdot\nabla u) + g_j\triangle(\triangle g_j u)\big]\,ds
\end{split}\end{equation}
We shall use the above to derive an a priori bound on $\|v\|_{Z}$, where $\|.\|_{Z}$ is as above in \eqref{Znorm}. Note that we have a schematic identity of the form 
\begin{equation}\nonumber\begin{split}
&(\triangle g_j)\triangle (g_j u)+2\nabla g_j\cdot\nabla\triangle(g_j u)
+2g_j\triangle(\nabla g_j\cdot\nabla u) + g_j\triangle(\triangle g_j u)\\
&=\sum_{|\alpha_1|\leq 2, |\alpha_3|\leq 3 \,\sum_{j=1}^3|\alpha_j|=4}\nabla^{\alpha_1} g_j\nabla^{\alpha_2}g_j\nabla^{\alpha_3}u 
\end{split}\end{equation}
These estimates are fairly tedious but essentially straightforward. We treat here the extreme cases $|\alpha_3|=0$, $|\alpha_3|=3$. 
\\

{\it{$|\alpha_3|=0$.}} We can write this case as 
\[
\sum_{|\alpha_1|\leq 2,\,|\alpha_1|+|\alpha_2|=4}\nabla^{\alpha_1}g_j \nabla^{\alpha_2}g_2 u
\]
We treat the cases $|\alpha_1|=2$ and $|\alpha_1|=0$, the remaining one being analogous. In the former, we get (another schematic identity)
\begin{equation}\nonumber\begin{split}
\nabla^{\alpha_1}g_j\nabla^{\alpha_2}g_j u =& \nabla^{\alpha_1}(-\triangle)^{-1}u^{(j-1)} \nabla^{\alpha_2}(-\triangle)^{-1}u^{(j-1)} g_j^{-2} u\\
&+\big(\nabla(-\triangle)^{-1}u^{(j-1)}\big)^2g_j^{-3}\big(\nabla(-\triangle)^{-1}u^{(j-1)}\big)^2g_j^{-3}u \\
&+\ldots
\end{split}\end{equation}
where we have again omitted similar terms. For the first term, use 
\[
\|\chi_{|x|\gtrsim 1}\la x \ra^{\frac{1}{2}}\nabla^{\alpha_1}(-\triangle)^{-1}u^{(j-1)}\|_{L^{\infty}}\lesssim 1,
\]
whence we get 
\[
\|\chi_{|x|\gtrsim 1}\nabla^{\alpha_1}(-\triangle)^{-1}u^{(j-1)} \nabla^{\alpha_2}(-\triangle)^{-1}u^{(j-1)} g_j^{-2} u\|_{L^2}\lesssim \|u\|_{L^2},
\]
while we also have 
\[
\|\chi_{|x|\lesssim1}\la x \ra^{\frac{1}{2}}\nabla^{\alpha_1}(-\triangle)^{-1}u^{(j-1)}(t, \cdot)\|_{L^{4}}\lesssim t^{-\frac{3}{8}};
\]
indeed, recall that $\|u^{(j-1)}\|_{Z}\leq D_5$.\\ 
Next, by Sobolev's embedding we have 
\[
\|u\|_{L^{\infty}}\lesssim \|v\|_{L^2} + \|u\|_{L^2},
\]
whence we get 
\begin{equation}\nonumber\begin{split}
&\|\chi_{|x|\lesssim 1}\nabla^{\alpha_1}(-\triangle)^{-1}u^{(j-1)} \nabla^{\alpha_2}(-\triangle)^{-1}u^{(j-1)} g_j^{-2} u\|_{L^2}\\&
\lesssim \prod_{k=1,2}\|\chi_{|x|\lesssim 1}\nabla^{\alpha_k}(-\triangle)^{-1}u^{(j-1)}\|_{L^4}^2(\|v\|_{L^2}+\|u\|_{L^2})\lesssim t^{-\frac{3}{4}}(\|v\|_{L^2} + \|u\|_{L^2})
\end{split}\end{equation}
Next, for the term 
\[
\big(\nabla(-\triangle)^{-1}u^{(j-1)}\big)^2g_j^{-3}\big(\nabla(-\triangle)^{-1}u^{(j-1)}\big)^2g_j^{-3}u,
\]
use 
\[
\|\big(\nabla(-\triangle)^{-1}u^{(j-1)}\big)^2g_j^{-3}\|_{L^{\infty}}\lesssim \|u^{(j-1)}\|_{L_{|x|\lesssim1}^4\cap L^1}^2\lesssim t^{-\frac{3}{4}}
\]
\[
\|\big(\nabla(-\triangle)^{-1}u^{(j-1)}\big)^2g_j^{-3}\|_{L^{2}}\lesssim \|u^{(j-1)}\|_{L^1\cap L^2}^2
\]
whence 
\[
\|\big(\nabla(-\triangle)^{-1}u^{(j-1)}\big)^2g_j^{-3}\big(\nabla(-\triangle)^{-1}u^{(j-1)}\big)^2g_j^{-3}u\|_{L^2}\lesssim t^{-\frac{3}{4}}(\|v\|_{L^2} + \|u\|_{L^2})
\]
Next, if $|\alpha_1| = 0$, i.e. we have a term $g_j (\nabla^{\alpha_2}g_j) u$ with $|\alpha_2|=4$, we can expand schematically
\[
\nabla^{\alpha_2}g_j = g_j^{-1}\nabla^{\alpha_2}(-\triangle)^{-1}u^{(j-1)}+\ldots+g_j^{-7}\big(\nabla(-\triangle)^{-1}u^{(j-1)}\big)^4
\]
where we omit 'intermediate' terms. Then we estimate the contribution of the first term by 
\begin{equation}\nonumber\begin{split}
\|\chi_{|x|\lesssim 1}g_j  g_j^{-1}\nabla^{\alpha_2}(-\triangle)^{-1}u^{(j-1)}u\|_{L^2}&\lesssim \|\chi_{|x|\lesssim 1}\nabla^{\alpha_2}(-\triangle)^{-1}u^{(j-1)}\|_{L^4}\|\chi_{|x|\lesssim 1}u\|_{4}\\&\lesssim t^{-\frac{3}{4}}[\triangle u^{(j-1)}\|_{Z}+\|u^{(j-1)}\|_{Z}]\|u\|_{Z}
\end{split}\end{equation}
where we are invoking the bounds 
\[
 \|\chi_{|x|\lesssim 1}\nabla^{\alpha_2}(-\triangle)^{-1}u^{(j-1)}\|_{L^4}\lesssim t^{-\frac{3}{8}}[\|\triangle u^{(j-1)}\|_{Z}+\|u^{(j-1)}\|_{Z}]
 \]
 \begin{equation}\nonumber\begin{split}
 \|\chi_{|x|\gtrsim 1}g_j  g_j^{-1}\nabla^{\alpha_2}(-\triangle)^{-1}u^{(j-1)}u\|_{L^2}&\lesssim \|\nabla^{\alpha_2}(-\triangle)^{-1}u^{(j-1)}\|_{L^2}\|\chi_{|x|\gtrsim 1}u\|_{L^\infty}\\&\lesssim \|\triangle u^{(j-1)}\|_{Z}+\|u^{(j-1)}\|_{Z}
 \end{split}\end{equation}
 For the second term above, we have 
 \[
 \chi_{|x|\lesssim 1}g_j^{-7}\big(\nabla(-\triangle)^{-1}u^{(j-1)}\big)^4\lesssim  \chi_{|x|\lesssim 1}|x|^{-\frac{5}{4}}\|u^{(j-1)}\|_{L^2}^3\|\chi_{|x|\lesssim1}u^{(j-1)}\|_{L^4},
\]
whence we obtain 
\begin{equation}\nonumber\begin{split}
\| \chi_{|x|\lesssim 1}g_j\cdot g_j^{-7}\big(\nabla(-\triangle)^{-1}u^{(j-1)}\big)^4\cdot u\|_{L^2}
\lesssim  &\|\chi_{|x|\lesssim 1}g_j^{-7}\big(\nabla(-\triangle)^{-1}u^{(j-1)}\big)^4\|_{L^{2}}\|u\|_{L^{\infty}}\\
&\lesssim t^{-\frac{3}{8}}\|v\|_{L^2}
\end{split}\end{equation}
The contribution in  the region $|x|\gtrsim 1$ is again much simpler due to radiality. 
When inserting the preceding estimates into the Duhamel formula, we can summarize these estimates by 
\begin{equation}\nonumber\begin{split}
&\|\int_0^t U(t, s)\big[\nabla^{\alpha_1}g_j\nabla^{\alpha_2}g_j u\big]\,ds\|_{Z}
\\&\lesssim \int_0^t(1+t^{\frac{1}{2}}(t-s)^{-\frac{1}{2}})s^{-\frac{3}{4}}[\|v(s, \cdot)\|_{Z}+\|\triangle \tilde{u}^{(j-1)}\|_{Z} +1]\,ds\\
&\lesssim T^{\frac{1}{4}}[\|v\|_{Z}+\|\triangle \tilde{u}^{(j-1)}\|_{Z} +1]
\end{split}\end{equation}
where the implied constant does not depend on $\|v\|_Z$ or $\|\triangle \tilde{u}^{(k)}\|_{Z}$, $k\leq j-1$, but only on the a priori bounds derived in Subsection \ref{sec:control}. 
\\

$|\alpha_3| = 3$.  We next treat the contribution of the expressions $g_j\nabla g_j \nabla^{\alpha_3}u$ with $|\alpha_3| =3$. This is schematically the same as $\nabla(-\triangle)^{-1}u^{(j-1)}\nabla^{\alpha_3}u$. We write
\[
\nabla(-\triangle)^{-1}u^{(j-1)}\nabla^{\alpha_3}u
=\chi_{|x|\gtrsim 1}\nabla(-\triangle)^{-1}u^{(j-1)}\nabla^{\alpha_3}u + \chi_{|x|\lesssim1}\nabla(-\triangle)^{-1}u^{(j-1)}\nabla^{\alpha_3}u
\]
For the first term, we can estimate 
\[
\|\chi_{|x|\gtrsim 1}\nabla(-\triangle)^{-1}u^{(j-1)}\nabla^{\alpha_3}u\|_{L^2}\lesssim \|\la x \ra^{-\frac{1}{2}}\nabla\triangle u\|_{L^2}\lesssim t^{-\frac{1}{2}}\|v\|_{Z},
\]
while for the second term, we have 
\[
\|\chi_{|x|\lesssim1}\nabla(-\triangle)^{-1}u^{(j-1)}\nabla^{\alpha_3}u\|_{L^2}\lesssim \|\chi_{|x|\lesssim1}\nabla(-\triangle)^{-1}u^{(j-1)}\|_{L^\infty}\|\chi_{|x|\lesssim1}\nabla^{\alpha_3}u\|_{L^2}\lesssim t^{-\frac{3}{4}}\|v\|_{Z}.
\]
Summarizing the preceding estimates, we have proved that 
\begin{equation}\nonumber\begin{split}
&\|\int_0^tU(t, s)\sum_{|\alpha_1|\leq 2, |\alpha_3|\leq 3 \,\sum_{j=1}^3|\alpha_j|=4}\nabla^{\alpha_1} g_j\nabla^{\alpha_2}g_j\nabla^{\alpha_3}u\,ds\|_Z
\\&\lesssim \int_0^t (1+t^{\frac{1}{2}}(t-s)^{-\frac{1}{2}})s^{-\frac{3}{4}}[\|v(s, \cdot)\|_{Z}+\|\triangle\tilde{u}^{(j-1)}\|_{Z} + 1]\,ds
\end{split}\end{equation}
whence recalling the equation for $Z$ stated further above, we get 
\[
\|v\|_{Z} \lesssim \|v_0\|_{L^2} +T^{\frac{1}{4}}[\|v\|_{Z}+\|\triangle\tilde{u}^{(j-1)}\|_{Z} + 1]
\]
from which we get $\|v\|_{Z}\lesssim \|v_0\|_{L^2}+T^{\frac{1}{4}}\|\triangle\tilde{u}^{(j-1)}\|_{Z}+1$; here the same remark applies about the implied constant as before. 
In particular, recalling the equation \eqref{eqn: Deltau} for $\triangle\tilde{u}^{(j)}$, we have 
\[
\|\triangle U(t, 0)\tilde{u}_0\|_Z\lesssim \|\tilde{v}_0\|_{L^2}+[T^{\frac{1}{4}}\|\triangle\tilde{u}^{(j-1)}\|_{Z}+1]\|\tilde{u}_0\|_{L^2}
\]
Next, consider the integral term in \eqref{eqn: Deltau}. Thanks to the immediately preceding, we have 
\begin{equation}\nonumber\begin{split}
&\|\int_0^t \triangle U(t,s)\big[ - \frac{\partial_s g_j}{g_j}\tilde{u}^{j})(s, \cdot) + \alpha \frac{g_{j-1}^2}{g_j} e^{2s}\big(\tilde{u}^{(j-1)}(s, \cdot)\big)^2\big]\,ds\|_{Z}\\
&\lesssim \int_0^t \|\triangle \big[ - \frac{\partial_s g_j}{g_j}\tilde{u}^{j})(s, \cdot) + \alpha \frac{g_{j-1}^2}{g_j} e^{2s}\big(\tilde{u}^{(j-1)}(s, \cdot)\big)^2\big]\|_{L^2}\,ds\\
&+[T^{\frac{1}{4}}\|\triangle\tilde{u}^{(j-1)}\|_{Z}+1]\int_0^t \|\big[ - \frac{\partial_s g_j}{g_j}\tilde{u}^{j})(s, \cdot) + \alpha \frac{g_{j-1}^2}{g_j} e^{2s}\big(\tilde{u}^{(j-1)}(s, \cdot)\big)^2\big]\|_{L^2}\,ds.
\end{split}\end{equation}
Here the second expression on the right is of course treated like in Subsection \ref{sec:control}, and so it suffices to consider the first expression on the right. 
We treat a number of different contributions separately: 
\\

{\it{Contribution of $\triangle\big(\frac{\partial_s g_j}{g_j}\tilde{u}^{j})\big) = \frac{\partial_s g_j}{g_j}\triangle \tilde{u}^{(j)} + \triangle\big[\frac{\partial_s g_j}{g_j}\big]\tilde{u}^{(j)} + 2\nabla\big[\frac{\partial_s g_j}{g_j}\big]\cdot\nabla\tilde{u}^{(j)} $.}} 
For the first term on the right, use the estimates in case {\it{(ii2)}} in Subsection \ref{sec:control} to conclude
\begin{equation}\nonumber\begin{split}
&\|\frac{\partial_s g_j}{g_j}\triangle \tilde{u}^{(j)}(s, \cdot)\|_{L^2}\\&\lesssim t^{-\frac{7}{8}}[\|\triangle\tilde{u}^{(j)}\|_{L^2} + t^{\frac{1}{2}}\|\chi_{|x|\lesssim1}\triangle\tilde{u}^{(j)}\|_{L^6}]\big[t^{\frac{1}{2}}\|\nabla u^{(j-1)}\|_{L^2}\big]\|u^{(j-2)}\|_{L^2\cap L^1}\\
&+t^{-\frac{3}{4}}[\|u^{(j-2)}\|_{L^2}+t^{\frac{1}{2}}\|\chi_{|x|\lesssim1}u^{(j-2)}\|_{L^6}]^2\|\triangle\tilde{u}^{(j)}\|_{L^2}
\end{split}\end{equation}
In particular, we get (for suitable $\nu>0$)
\[
\max_{t\in [0, T]}\int_0^t \|\frac{\partial_s g_j}{g_j}\triangle \tilde{u}^{(j)}(s, \cdot)\|_{L^2}\,ds\lesssim T^{\nu}\|\triangle \tilde{u}^{(j)}\|_{Z}
\]
where the implied constant only depends on the a priori bounds on $u^{(k)}$, $k\leq j-1$, derived in Subsection \ref{sec:control}.

Next, consider the contribution of $\triangle\big[\frac{\partial_s g_j}{g_j}\big]\tilde{u}^{(j)}$. This is again tedious but requires no new ideas to estimate: decompose 
\begin{equation}\nonumber\begin{split}
\triangle\big[\frac{\partial_s g_j}{g_j}\big]\tilde{u}^{(j)}&= \frac{(-\triangle)^{-1}u^{(j-2)}\triangle u^{(j-1)} + \alpha \big(u^{(j-1)}\big)^2}{(-\triangle)^{-1}u^{(j-1)}}\tilde{u}^{(j)}\\
&+2\frac{\nabla(-\triangle)^{-1}\big[(-\triangle)^{-1}u^{(j-2)}\triangle u^{(j-1)} + \alpha \big(u^{(j-1)}\big)^2\big]}{\big[(-\triangle)^{-1}u^{(j-1)}\big]^2}\cdot\nabla(-\triangle)^{-1}u^{(j-1)}\tilde{u}^{(j)}\\
&+(-\triangle)^{-1}\big[(-\triangle)^{-1}u^{(j-2)}\triangle u^{(j-1)} + \alpha \big(u^{(j-1)}\big)^2\big]\triangle\big[\frac{1}{(-\triangle)^{-1}u^{(j-1)}}\big]\tilde{u}^{(j)}
\end{split}\end{equation}
We estimate the first expression on the right, the second and third being more of the same. For the first, write it as 
\begin{equation}\nonumber\begin{split}
&\frac{(-\triangle)^{-1}u^{(j-2)}\triangle u^{(j-1)} + \alpha \big(u^{(j-1)}\big)^2}{(-\triangle)^{-1}u^{(j-1)}}\tilde{u}^{(j)}\\
&=\chi_{|x|\lesssim1}\frac{(-\triangle)^{-1}u^{(j-2)}\triangle u^{(j-1)} + \alpha \big(u^{(j-1)}\big)^2}{(-\triangle)^{-1}u^{(j-1)}}\tilde{u}^{(j)}\\
&+\chi_{|x|\gtrsim 1}\frac{(-\triangle)^{-1}u^{(j-2)}\triangle u^{(j-1)} + \alpha \big(u^{(j-1)}\big)^2}{(-\triangle)^{-1}u^{(j-1)}}\tilde{u}^{(j)}\\
\end{split}\end{equation}
Estimate the first expression on the right via
\begin{equation}\nonumber\begin{split}
&\|\chi_{|x|\lesssim1}\frac{(-\triangle)^{-1}u^{(j-2)}\triangle u^{(j-1)} + \alpha \big(u^{(j-1)}\big)^2}{(-\triangle)^{-1}u^{(j-1)}}\tilde{u}^{(j)}\|\\
&\lesssim \|\triangle u^{(j-1)}\|_{L^4}\|\tilde{u}^{(j)}\|_{L^4}\|u^{(j-2)}\|_{L^2}+\|u^{(j-1)}\|_{L^{\infty}}\|u^{(j-1)}\|_{L^4}\|\tilde{u}^{(j)}\|_{L^4}\\
&\lesssim t^{-\frac{3}{4}}\|\triangle u^{(j-1)}\|_{Z} 
\end{split}\end{equation}
where the absolute constant only depends on the bounds established in Subsection \ref{sec:control}. On the other hand, we can estimate 
\begin{equation}\nonumber\begin{split}
&\|\chi_{|x|\gtrsim 1}\frac{(-\triangle)^{-1}u^{(j-2)}\triangle u^{(j-1)} + \alpha \big(u^{(j-1)}\big)^2}{(-\triangle)^{-1}u^{(j-1)}}\tilde{u}^{(j)}\|_{L^2}\\&\lesssim \|u^{(j-2)}\|_{L^1}\|\triangle u^{(j-1)}\|_{L^2}\|\chi_{|x|\gtrsim 1}\tilde{u}^{(j)}\|_{L^{\infty}}+\|\chi_{|x|\gtrsim 1}\la x \ra^{\frac{1}{2}}u^{(j-1)}\|_{L^\infty}^2\|\tilde{u}^{(j)}\|_{L^2}\\
&\lesssim \|\triangle u^{(j-1)}\|_{L^2}+1
\end{split}\end{equation}

Finally, we consider the contribution of the third term above, $2\nabla\big[\frac{\partial_s g_j}{g_j}\big]\cdot\nabla\tilde{u}^{(j)}$. Write it as 
 \begin{equation}\nonumber\begin{split}
 &\frac{\nabla(-\triangle)^{-1}\big[(-\triangle)^{-1}u^{(j-2)}\triangle u^{(j-1)} + \alpha \big(u^{(j-1)}\big)^2\big]}{(-\triangle)^{-1}u^{(j-1)}}\cdot\nabla\tilde{u}^{(j)}\\
 &+\frac{(-\triangle)^{-1}\big[(-\triangle)^{-1}u^{(j-2)}\triangle u^{(j-1)} + \alpha \big(u^{(j-1)}\big)^2\big]}{\big[(-\triangle)^{-1}u^{(j-1)}\big]^2}\nabla(-\triangle)^{-1}u^{(j-1)}\cdot\nabla\tilde{u}^{(j)}\\
 \end{split}\end{equation}
We estimate the first term, the second being similar.  Split it into 
 \begin{equation}\nonumber\begin{split}
&\frac{\nabla(-\triangle)^{-1}\big[(-\triangle)^{-1}u^{(j-2)}\triangle u^{(j-1)} + \alpha \big(u^{(j-1)}\big)^2\big]}{(-\triangle)^{-1}u^{(j-1)}}\cdot\nabla\tilde{u}^{(j)}\\
&=\chi_{|x|\lesssim1}\frac{\nabla(-\triangle)^{-1}\big[(-\triangle)^{-1}u^{(j-2)}\triangle u^{(j-1)} + \alpha \big(u^{(j-1)}\big)^2\big]}{(-\triangle)^{-1}u^{(j-1)}}\cdot\nabla\tilde{u}^{(j)}\\
&+\chi_{|x|\gtrsim 1}\frac{\nabla(-\triangle)^{-1}\big[(-\triangle)^{-1}u^{(j-2)}\triangle u^{(j-1)} + \alpha \big(u^{(j-1)}\big)^2\big]}{(-\triangle)^{-1}u^{(j-1)}}\cdot\nabla\tilde{u}^{(j)}\\
\end{split}\end{equation}
For the first term on the right, we get 
\begin{equation}\nonumber\begin{split}
&\|\chi_{|x|\lesssim1}\frac{\nabla(-\triangle)^{-1}\big[(-\triangle)^{-1}u^{(j-2)}\triangle u^{(j-1)} + \alpha \big(u^{(j-1)}\big)^2\big]}{(-\triangle)^{-1}u^{(j-1)}}\cdot\nabla\tilde{u}^{(j)}\|_{L^2}\\
&\lesssim \|\chi_{|x|\lesssim1}\triangle u^{(j-1)}\|_{L^4}\|u^{(j-2)}\|_{L^1\cap L^2}\|\nabla u^{(j)}\|_{L^2}+\sum_{\alpha = 0,2}\|\triangle^\alpha u^{(j-1)}\|_{L^2}\|u^{(j-1)}\|_{L^4}|\nabla u^{(j)}\|_{L^2}\\
&\lesssim t^{-\frac{3}{4}}\big[t^{\frac{1}{2}}\|\la x \ra^{-\frac{1}{2}}\nabla\triangle u^{(j-1)}\|_{L^2}+\|\triangle u^{(j-1)}\|_{L^2} +\|u^{(j-1)}\|_{L^2}\big],
\end{split}\end{equation}
where the implied constant only depends on the bounds derived in Subsection \ref{sec:control}. Furthermore, we have 
\begin{equation}\nonumber\begin{split}
&\|\chi_{|x|\gtrsim 1}\frac{\nabla(-\triangle)^{-1}\big[(-\triangle)^{-1}u^{(j-2)}\triangle u^{(j-1)} + \alpha \big(u^{(j-1)}\big)^2\big]}{(-\triangle)^{-1}u^{(j-1)}}\cdot\nabla\tilde{u}^{(j)}\|_{L^2}\\
&\lesssim \|u^{(j-2)}\|_{L^1\cap L^2}\|\triangle u^{(j-1)}\|_{L^2}\|\nabla\tilde{u}^{(j)}\|_{L^2}\\&+[\|\triangle u^{(j-1)}\|_{L^2}+\|u^{(j-1)}\|_{L^1}]\|u^{(j-1)}\|_{L^1\cap L^4}\|\nabla\tilde{u}^{(j)}\|_{L^2}\lesssim t^{-\frac{3}{4}}(\|\triangle u^{(j-1)}\|_{L^2}+1)
\end{split}\end{equation}
This completes our estimation of $\|\triangle\big(\frac{\partial_s g_j}{g_j}\tilde{u}^{j}\big)\|_{L^2}$. 
\\

{\it{Contribution of $\alpha \triangle\big[\frac{g_{j-1}^2}{g_j} e^{2s}\big(\tilde{u}^{(j-1)}(s, \cdot)\big)^2\big]$.}} Upon expanding, this results in a number of terms, and in particular the expression $\frac{g_{j-1}^2}{g_j} e^{2s}|\nabla\tilde{u}^{(j-1)}|^2(s, \cdot)$, where we omit the constant $\alpha$. Here we place both factors $\nabla\tilde{u}^{(j-1)}$ into $L^4$, taking advantage of Gagliardo-Nirenberg's inequality: 
\begin{equation}\nonumber\begin{split}
\|\la x \ra^{-\frac{1}{2}}\nabla\tilde{u}^{(j-1)}\|_{L^4}&\lesssim \|\la x \ra^{-\frac{1}{2}}\nabla\tilde{u}^{(j-1)}\|_{L^\infty}^{\frac{1}{2}}\|\la x \ra^{-\frac{1}{2}}\nabla\tilde{u}^{(j-1)}\|_{L^2}^{\frac{1}{2}}\\ 
&\lesssim [\|\la x \ra^{-\frac{1}{2}}\nabla\tilde{u}^{(j-1)}\|_{L^p} + \|\la x \ra^{-\frac{1}{2}}\nabla^2\tilde{u}^{(j-1)}\|_{L^p}]^{\frac{1}{2}}\|\la x \ra^{-\frac{1}{2}}\nabla\tilde{u}^{(j-1)}\|_{L^2}^{\frac{1}{2}}\\ 
\end{split}\end{equation}
for some $p\in (3, 6)$. Further, we have 
\[
\|\la x \ra^{-\frac{1}{2}}\nabla\tilde{u}^{(j-1)}\|_{L^p}\lesssim \|\tilde{u}^{(j-1)}\|_{L^2}^{\frac{1}{2}-}\|\triangle \tilde{u}^{(j-1)}\|_{L^2}^{\frac{1}{2}+} + \|\la x\ra^{-\frac{1}{2}}\tilde{u}^{(j-1)}\|_{L^p}
\]
\begin{equation}\nonumber\begin{split}
\|\la x \ra^{-\frac{1}{2}}\nabla^2 u^{(j-1)}\|_{L^p}&\lesssim \|\la x \ra^{-\frac{1}{2}}\nabla\triangle u^{(j-1)}\|_{L^2}^{\frac{5}{6}+}\|\la x \ra^{-\frac{1}{2}}u^{(j-1)}\|_{L^2}^{\frac{1}{6}-}\\
&+\|\triangle u^{(j-1)}\|_{L^2}^{\frac{5}{6}+}\|u^{(j-1)}\|_{L^2}^{\frac{1}{6}-}+\|u^{(j-1)}\|_{L^2}
\end{split}\end{equation}
Combining these estimates, we deduce the bound
\[
 \|\la x \ra^{-\frac{1}{2}}\nabla\tilde{u}^{(j-1)}(t, \cdot)\|_{L^4}^2\lesssim t^{-\frac{11}{12}}[\|\triangle\tilde{u}^{(j-1)}\|_{L^2} + t^{\frac{1}{2}}\|\la x \ra^{-\frac{1}{2}}\nabla\triangle\tilde{u}^{(j-1)}\|_{L^2}+1]
 \]
 The remaining terms in the expansion of $\alpha \triangle\big[\frac{g_{j-1}^2}{g_j} e^{2s}\big(\tilde{u}^{(j-1)}(s, \cdot)\big)^2\big]$ are treated like the preceding terms and omitted. 
\\

To summarize the preceding discussion, we obtain the following bound: 
 \begin{equation}\nonumber\begin{split}
&\|\int_0^t U(t,s)\triangle \big[ - \frac{\partial_s g_j}{g_j}\tilde{u}^{j}(s, \cdot) + \alpha \frac{g_{j-1}^2}{g_j} e^{2s}\big(\tilde{u}^{(j-1)}(s, \cdot)\big)^2\big]\,ds\|_{Z}\\
&\lesssim \int_0^t s^{-(1-)}[\|\triangle \tilde{u}^{(j-1)}\|_{L^2} + s^{\frac{1}{2}}\|\la x \ra^{-\frac{1}{2}}\nabla\triangle \tilde{u}^{(j-1)}\|_{L^2} +1]\,ds + T^{\nu}\|\triangle\tilde{u}^{(j)}\|_Z,
\end{split}\end{equation}
and furthermore, taking the supremum over $t\in [0, T]$, we obtain the bound (recall \eqref{eqn: Deltau} and the followig estimates)
\[
\|\triangle\tilde{u}^{(j)}\|_{Z}\lesssim T^{\nu}\big[\|\triangle\tilde{u}^{(j)}\|_{Z} +\|\triangle\tilde{u}^{(j-1)}\|_{Z}\big] + \|\triangle \tilde{u}_0\|_{L^2}+1
\]
where the implicit constant only depends on the bounds derived in Subsection \ref{sec:control}. We conclude that the bound 
\[
\|\triangle\tilde{u}^{(j-1)}\|_{Z}\leq D_4
\]
is recovered, provided $D_4$ is large enough in relation to $ \|\triangle \tilde{u}_0\|_{L^2}$ and the a priori bounds derived in Subsection \ref{sec:control}, and $T$ is small enough {\it{in relation to the a priori bounds derived in Subsection \ref{sec:control}}}. This completes the higher derivative bounds of the lemma for $|\alpha|=2$, and the ones for $|\alpha|=1$ follow by interpolation.  The proof of Lemma \ref{lem:bounds} is finally completed.

\subsection{Convergence of the $u^{(j)}$}\label{sec:convergence}
 In order to complete the proof of Proposition~\ref{prop:local}, we need to show that the iterates $u^{(j)}$ constructed in Lemma~\ref{lem:bounds} actually converge to a local-in-time solution, on some slice $[0, \tilde{T}]\times\threed$. Recall that the interval $[0,T]$ on which we proved a priori bounds on the iterates only depends on 
\[
\|u_0\|_{X}, \quad r_0,  \quad \int_{r_0<|y|<r_0^{-1}}u_0\,dy.
\]
Yet for the proposition, we may work on $[0, \tilde{T}]$ where $\tilde{T}>0$ depends in addition on $\|\triangle u_0\|_{L^2}$. Now consider \eqref{eqn:modelj} for the iterates $j$ and $j-1$.  Subtracting \eqref{eqn:modelj} for $j$ with its counterpart for $j-1$ we deduce the difference equation
\begin{equation}\nonumber\begin{split}
\partial_t\big[u^{(j)}-u^{(j-1)}\big] =& ~ (-\triangle)^{-1}u^{(j-1)}\triangle\big[u^{(j)}-u^{(j-1)}\big]
+
B^{(j)},
\\ 
& \big[u^{(j)}-u^{(j-1)}\big](0, \cdot) = 0.
\end{split}\end{equation}
Here we use the definition
$$
B^{(j)} \eqdef \triangle u^{(j-1)}\big[(-\triangle)^{-1}u^{(j-1)} - (-\triangle)^{-1}u^{(j-2)}\big]
+\alpha \big[\big(u^{(j-1)}\big)^2 - \big(u^{(j-2)}\big)^2\big].
$$
Proceeding as in the derivation of \eqref{eqn:tildemodelj}, we obtain
$$
\partial_t \dNOTj+A(t) \dNOTj = 
-\frac{\partial_t g_j}{g_j} \dNOTj
+
e^{-t} g_j^{-1}
B^{(j)},
$$
where
$
\dNOTj \eqdef e^{-t} g_j^{-1} \left(u^{(j)}-u^{(j-1)} \right)
$
and we recall the definitions from \eqref{operatorDEF}.

Now, using Duhamel, 
we obtain the integral equation 
\begin{equation}\label{eqn:jj-1}\begin{split}
\dNOTj
=
\int_0^t U(t, s)
\left\{
-\frac{\partial_s g_j}{g_j} \dNOTj + e^{-s} g_j^{-1} B^{(j)}
\right\} ~ 
ds.
\end{split}\end{equation}
We now intend to use the a priori bounds derived in Section \ref{sec:control} through Section \ref{sec:higherDb} to estimate the source terms on the right. Here the expression 
\[
e^{-t} g_j^{-1} \triangle u^{(j-1)}\big[(-\triangle)^{-1}u^{(j-1)} - (-\triangle)^{-1}u^{(j-2)}\big],
\]
appears somewhat delicate and requires us to iterate once more. 
We will use \eqref{asymptoticU} implicitly several times in the following developments.

Specifically, using \eqref{NewtonFormula3D}, we write 
\begin{equation}\nonumber\begin{split}
e^{-t} g_j^{-1}\triangle u^{(j-1) } & \left\{(-\triangle)^{-1}u^{(j-1)} - (-\triangle)^{-1}u^{(j-2)}\right\}
\\
= &  e^{-t} g_j^{-1}\triangle u^{(j-1)}\frac{1}{4\pi|x|}\int_{|y|\leq|x|}\big(u^{(j-1)}(t, y) - u^{(j-2)}(t, y)\big)\,dy
\\
& + 
e^{-t} g_j^{-1} \triangle u^{(j-1) }\int_{|y|>|x|}\frac{\big(u^{(j-1)}(t, y) - u^{(j-2)}(t, y)\big)}{4\pi|y|}\,dy.
\end{split}\end{equation}
Note that we have 
\begin{equation}\label{est:same}\begin{split}
&\|e^{-t} g_j^{-1}\triangle u^{(j-1)}\frac{1}{4\pi|x|}\int_{|y|\leq|x|}\big(u^{(j-1)}(t, y) - u^{(j-2)}(t, y)\big)\,dy\|_{L^2}
\\&\lesssim \|\ang{x}^{1/2}\triangle u^{(j-1)}\|_{L^2}\|\dNOTjO(t)\|_{L^2}.
\end{split}\end{equation}
For the second term in the expansion above we further split 
\begin{equation}\nonumber\begin{split}
&e^{-t} g_j^{-1}\triangle u^{(j-1)}\int_{|y|>|x|}\frac{\big(u^{(j-1)}(t, y) - u^{(j-2)}(t, y)\big)}{4\pi|y|}\,dy\\
= & e^{-t} g_j^{-1}\triangle u^{(j-1)}
\int_{|y|>|x|}\chi_{|y|\lesssim \la x \ra}\frac{\big(u^{(j-1)}(t, y) - u^{(j-2)}(t, y)\big)}{4\pi|y|}\,dy\\
& +e^{-t} g_j^{-1}\triangle u^{(j-1)}\int_{|y|>|x|}\chi_{|y|\gtrsim\la x \ra}\frac{\big(u^{(j-1)}(t, y) - u^{(j-2)}(t, y)\big)}{4\pi|y|}\,dy.
\\
\end{split}\end{equation}
For the first term on the right, we again have the same estimate \eqref{est:same}.
For the second integral above involving the cutoff $\chi_{|y|\gtrsim \la x \ra}$, such an estimate unfortunately fails logarithmically. Hence we go one step deeper into the iteration and replace
\begin{equation}\nonumber\begin{split}
&\int_{|y|>|x|}\chi_{|y|\gtrsim\la x \ra}\frac{\big(u^{(j-1)}(t, y) - u^{(j-2)}(t, y)\big)}{4\pi|y|}\,dy\\
&=\int_0^t ds\int_{|y|>|x|}\chi_{|y|\gtrsim\la x \ra}\frac{\triangle_{j-2}^{j-1}\big((-\triangle)^{-1}u^{(k-1)}\triangle u^{(k)} + \alpha \big(u^{(k)}\big)^2\big)}{4\pi|y|}\,dy\\
\end{split}\end{equation}
where $\Delta_{j-2}^{j-1}$ indicates the difference of the expression for $k=j-2, k=j-1$. Then using integration by parts, we get 
\begin{equation}\nonumber\begin{split}
&\int_0^t ds\int_{|y|>|x|}\chi_{|y|\gtrsim\la x \ra}\frac{\triangle_{j-2}^{j-1}\big((-\triangle)^{-1}u^{(k-1)}\triangle u^{(k)}\big)}{|y|}\,dy\\
&=-\int_0^t ds\int_{|y|>|x|}\nabla\big[\frac{\chi_{|y|\gtrsim\la x \ra}}{|y|}(-\triangle)^{-1}\big(\triangle_{j-3}^{j-2}u^{(k)}\big)\big]\nabla u^{(j-1)}\,dy\\
&-\int_0^t ds\int_{|y|>|x|}\nabla\big[\frac{\chi_{|y|\gtrsim\la x \ra}}{|y|}(-\triangle)^{-1}\big(u^{(j-2)}\big)\big]\nabla \triangle_{j-2}^{j-1}u^{(k)}\,dy\\
\end{split}\end{equation}
The first term on the right is estimated by 
\begin{equation}\nonumber\begin{split}
&\big|e^{-t}\int_0^t ds\int_{|y|>|x|}\nabla\big[\frac{\chi_{|y|\gtrsim\la x \ra}}{|y|}(-\triangle)^{-1}\big(\triangle_{j-3}^{j-2}u^{(k)}\big)\big]\nabla u^{(j-1)}\,dy\big|\\
&\lesssim \tilde{T}^{\frac{1}{2}}\big(\max_{t\in [0, \tilde{T}]}t^{\frac{1}{2}}\|\nabla u^{(j-1)}(t)\|_{L^2}\big)
\big[\|\dNOTjT(t)\|_{L^2}\\&\hspace{4cm}+\|\int_{|y|\geq|x|}\chi_{|y|\gtrsim |x|}\frac{u^{(j-2)}-u^{(j-3)}}{4\pi|y|}\,dy\|_{L_x^{\infty}}\big].
\end{split}\end{equation}
The second term above is estimated by 
\begin{equation}\nonumber\begin{split}
&e^{-t}\int_0^t ds\int_{|y|>|x|}\nabla\big[\frac{\chi_{|y|\gtrsim\la x \ra}}{|y|}(-\triangle)^{-1}\big(u^{(j-2)}\big)\big]\nabla \triangle_{j-2}^{j-1}u^{(k)}\,dy\\
&\lesssim \tilde{T}^{\frac{1}{2}}\|u^{(j-2)}\|_{L_s^\infty L_x^1}\big[\max_{t\in[0,\tilde{T}]}t^{\frac{1}{2}}\|\la x \ra^{-\frac{1}{2}}\nabla\dNOTjO(t)\|_{L^2}\big].
\end{split}\end{equation}
Combining the preceding estimates,
we easily deduce 
\begin{equation}\label{eqn: diff}\begin{split}
&\|e^{-t} {g}^{-1}_j \triangle u^{(j-1)}\big[(-\triangle)^{-1}u^{(j-1)} - (-\triangle)^{-1}u^{(j-2)}\big]\|_{L^2}
\\&+\|\int_{|y|>|x|}\chi_{|y|\gtrsim\la x \ra}\frac{\big(u^{(j-1)}(s, y) - u^{(j-2)}(s, y)\big)}{4\pi|y|}\,dy\|_{L_x^\infty}\\
&\lesssim \|\dNOTjO\|_{Z} + \|\dNOTjT\|_{Z}\\
&+ \tilde{T}^{\frac{1}{2}}\|\int_{|y|>|x|}\chi_{|y|\gtrsim\la x \ra}\frac{\big(u^{(j-2)}(s, y) - u^{(j-3)}(s, y)\big)}{4\pi|y|}\,dy\|_{L_{t,x}^\infty([0, \tilde{T}]\times\threed)}.
\end{split}\end{equation}
The remaining terms in \eqref{eqn:jj-1} are much more straightforward: 
we have 
\begin{equation}\label{eqn: 9}\begin{split}
&
\left\| e^{-t} g_j^{-1} \alpha \big[\big(u^{(j-1)}\big)^2 - \big(u^{(j-2)}\big)^2\big] \right\|_{L^2(\threed)}
\\
&
\lesssim 
\|\dNOTjO\|_{L^2}\|u^{(j-1)}+u^{(j-2)}\|_{L^\infty(\threed)}
\\&
\lesssim 
\|\dNOTjO\|_{L^2}[\| u^{(j-1)}\|_{H^2}+\| u^{(j-2)}\|_{H^2(\threed)}]
\\
&
\lesssim D_4\| \dNOTjO \|_{L^2(\threed)}.
\end{split}\end{equation}
Finally, as in {\it{(ii2)}} of Subsection \ref{sec:control}, we get 
\begin{equation}\label{eqn:10}
\|\frac{\partial_s g_j}{g_j} \dNOTj \|_{L^2}\lesssim s^{-(1-)}\big[\| \dNOTj \|_{L^2} + s^{\frac{1}{2}}\|\la x \ra^{-\frac{1}{2}}\nabla \dNOTj \|_{L^2}\big],
\end{equation}
with implied constant only depending on the bounds derived in Subsection \ref{sec:control}. 
By using \eqref{eqn: diff}, \eqref{eqn: 9}, \eqref{eqn:10} in \eqref{eqn:jj-1},
similar estimates to control the second component of $\|D^{(j)}\|_{Z}$,  and choosing $\tilde{T}$ small enough in relation to $D_1, D_2, D_3, D_4$, we deduce 
\begin{equation}\nonumber\begin{split}
&\| \dNOTj \|_{Z} + \|\int_{|y|>|x|}\chi_{|y|\gtrsim\la x \ra}\frac{\big(u^{(j-1)}(s, y) - u^{(j-2)}(s, y)\big)}{4\pi|y|}\,dy\|_{L_{s,x}^\infty([0, \tilde{T}]\times\threed)}
\\
&<\frac{1}{2}\big[\sum_{k=j-2}^{j-1}\|D^{(k)}\|_{Z}
 + \|\int_{|y|>|x|}\chi_{|y|\gtrsim\la x \ra}\frac{\big(u^{(j-2)} - u^{(j-3)}\big)}{4\pi|y|}\,dy\|_{L_{s,x}^\infty([0, \tilde{T}]\times\threed)}\big].
\end{split}\end{equation}
Here we recall $\|\cdot \|_Z$ from \eqref{Znorm}.  It follows that the $\{u^{(j)}\}_{j\geq 1}$ converge to a limit $u$ on $[0, \tilde{T}]$ satisfying the desired estimates. 

\subsection{Uniqueness}\label{sec:unq}
Let $u_{1}$ and $u_{2}$ be two solutions to \eqref{eqn:model}
with the same initial data, and 
satisfying all the properties in Proposition \ref{prop:local}. 

Then one gets the differential equation 
\begin{equation}\nonumber\begin{split}
\partial_t\big[u_1 - u_2\big] =& (-\triangle)^{-1} u_1 ~ \triangle \big[u_1 - u_2\big] + \triangle u_2 ~ \big[(-\triangle)^{-1} u_1-(-\triangle)^{-1} u_2\big]\\
&+\alpha \big[u_1^2 - u_2^2\big],\,\,\,\big[u_1 - u_2\big](0, \cdot) = 0.
\end{split}\end{equation}
But then choosing 
\[
\tilde{T} \eqdef \tilde{T}\left(\max_{i=1,2}\|u_i\|_{X}, r_0, \int_{r_0<|y|<r_0^{-1}}u_1\,dy, \max_{i=1,2}\|\triangle \tilde{u}_i\|_{L^2}\right),
\]
and replicating the immediately preceding estimates, we infer that
\[
u_1(t, \cdot) = u_2(t, \cdot), \quad \forall  t\in [0, \tilde{T}].
\]
Repeating this argument, observe that the set where $u_1$ and $u_2$ agree is open and closed and the two solutions co-incide. 
This completes the proof of Proposition \ref{prop:local}.

In the next section, we prove using some monotonicity formula that our local solutions must in fact exist globally in time.

\section{Global existence theory}\label{sec:global}

In this last section, we finally prove Theorems \ref{thm:Main} and then \ref{thm:MainImprov}.

\begin{proof}[Proof of Theorem \ref{thm:Main}] Given data $u_0$ as in the theorem, by the local existence theory we can find $T_{\text{max}}>0$ such that there exists a unique solution $u(t, \cdot)$ of \eqref{eqn:model} for $t\in [0, T_{\text{max}})$. We first show $T_{\text{max}}=\infty$. Suppose this is false. 

We immediately obtain the following monotonicity from \eqref{eqn:model}:
\[
0\leq \int_{\threed}u(t, x)\,dx\leq \int_{\threed}u_0(x)\,dx.
\]
The assumption $\alpha<\frac{1}{2}$ also implies a bound on $\|u\|_{L^{2+\delta}}$ for $\delta>0$ small enough as follows; using integration by parts we obtain
\begin{equation}\nonumber\begin{split}
\int_{\threed}\partial_t u u^{1+\delta}\,dx = &\int_{\threed}(-\triangle)^{-1} u \nabla[\nabla u u^{1+\delta}]\,dx - (1+\delta)\int_{\threed}(-\triangle)^{-1} u |\nabla u|^2 u^{\delta}\,dx\\
&+\alpha \int_{\threed} u^{3+\delta}\,dx\\
&\leq \left(\alpha-\frac{1}{2+\delta}\right)\int_{\threed} u^{3+\delta}\,dx\leq 0,
\end{split}\end{equation}
where the last inequality follows from omitting the negative term and performing an additional integration by parts for the first term using $\nabla u u^{1+\delta} = \frac{1}{2+\delta}\nabla(u^{2+\delta})$. 
We conclude that
\[
\int_{\threed}u^{2+\delta}(t, \cdot)\,dx\leq \int_{\threed}u_0^{2+\delta}\,dx,\quad t\geq 0.
\]
Next, pick $r_0>0$ such that \eqref{initialNON} holds.  The computation in Section \ref{sec:ELLcontrol} shows that we have the bound \eqref{boundUinv} holding for all $t\in [0, T_{\text{max}}]$,
where the constant $D_2 = D_2(u_0, r_0, T_{\text{max}})>0$. 

Indeed, from Section \ref{sec:ELLcontrol} we get a uniform positive lower bound on 
\[
\int_{r_0<|x|<r_0^{-1}}u(t, x)\,dx,\quad t\in [0, T_{\text{max}}].
\]
Now pick 
\[
T = T\left(\|u_0\|_{X}, r_0, \inf_{t\in [0, T_{\text{max}})}\int_{r_0<|x|<r_0^{-1}}u(t, x)\,dx\right)>0,
\]
as in Lemma~\ref{lem:bounds} and write $I = [0, T_{\text{max}}) = \bigcup_{j=1}^l I_j$ with intervals $I_j$ satisfying $|I_j| = T$.  Using the assumption $\triangle \tilde{u}_0\in L^2$ and applying Lemma~\ref{lem:bounds} successively to each $I_j$, we obtain 
\[
\sup_{t\in I}\|\triangle \tilde{u}(t, \cdot)\|_{L^2}
\leq 
C\left(\|u_0\|_{X}, ~ r_0, ~ \int_{r_0<|x|<r_0^{-1}}u_0(x)\,dx, ~T_{\text{max}}
\right)
\|\triangle \tilde{u}_0\|_{L^2}.
\]
But then Proposition \ref{prop:local} grants
$$
\tilde{T} = \tilde{T}\left(\|u_0\|_{X}+\|\triangle \tilde{u}_0\|_{L^2}, r_0, \int_{r_0<|x|<r_0^{-1}}u_0(x)\,dx, T_{\text{max}}\right)
>0,
$$
such that the solution $u(t, \cdot)$ extends 
to $[0, T_{\text{max}}+\tilde{T})$, which contradicts  $T_{\text{max}}<\infty$.

{\it{Decay at infinity}}.  Note that  for $t_1>t_2$ a solution to \eqref{eqn:model} satisfies
\[
\int_{\threed}u(t_1, \cdot)\,dx - \int_{\threed}u(t_2, \cdot)\,dx = (\alpha-1)\int_{t_1}^{t_2}\int_{\threed}u^2(s, x)\,dxds,
\]
whence we have 
$$
\lim_{T\to\infty}\int_{T}^{\infty}\int_{\threed}u^2(s, x)\,dxds = 0.
$$
This follows because we have an a priori bound on $\int u(t_i, \cdot) dx$ for $i=1,2$, and the quantity below is non-negative.  This implies that
$$
(1-\alpha)\int_{t_1}^{t_2}\int_{\R^3}u^2(s, x) dx ds
$$
is bounded uniformly with respect to $t_1$, $t_2$ and of course increasing with respect to $t_2$.  Hence the limit as $t_2\rightarrow\infty$ exists and is given by
$$
(1-\alpha)\int_{t_1}^\infty\int_{\R^3}u^2(s, x) dx ds.
$$
This function is non-increasing with respect to $t_1$ so that the assertion follows.

In particular, there exists a sequence $t_n\rightarrow\infty$ with 
\[
\int_{\threed}u^2(t_n, x)\,dx\rightarrow 0,
\]
 and by \eqref{monEST}, the $L^1$-a priori bound, and H\"{o}lder's inequality, we get 
 \[
 \| u(t_n)\|_{L^q(\threed)}\rightarrow 0,\quad 1<q\leq 2.
 \]
 But the monotonicity established above for $\|u(t,\cdot)\|_{L^q(\threed)}$, $1\leq q\leq 2$ implies 
\[
\lim_{T\to\infty}\int_{T}^{\infty}\int_{\threed}u_t u^{q-1}\,dxdt = 0.
\]
It follows that 
\[
\lim_{T\to\infty}\|u(T, \cdot)\|_{L^q(\threed)} = 0,\quad q\in (1, 2].
\]
This completes the proof of Theorem \ref{thm:Main}.
\end{proof}

Based on many of the computations in Theorem \ref{thm:Main} we will now prove Theorem \ref{thm:MainImprov} after deducing  additional a priori bounds on $\|u(t, \cdot)\|_{L^{2+}(\threed)}$ when $\alpha \in[0, 2/3)$: 

\begin{proof}[Proof of Theorem \ref{thm:MainImprov}] Let $u(t, \cdot)$ be a solution of \eqref{eqn:model}. We show that for some small $\delta>0$, and any $T>0$ such that $u(t, \cdot)$ is defined on $[0, T)\times\threed$ we have 
\[
\limsup_{t\rightarrow T}\|u(t)\|_{L^{2+\delta}(\threed)}<\infty.
\]
Once this is known, the theorem follows as in the last proof.  Using the assumption $\alpha<\frac{2}{3}$, we easily infer as in the preceding that
\[
\limsup_{t\rightarrow T}\|u(t, \cdot)\|_{L^{\frac{3}{2}+\gamma}}<\infty,
\]
for $\gamma>0$ sufficiently small.  Consider 
\begin{equation}\nonumber\begin{split}
\int_{\threed}\partial_t u u^{1+\delta}\,dx &= \int_{\threed}\big[(-\triangle)^{-1}u\triangle u +\alpha u^2\big]u^{1+\delta}\,dx\\
&=\left(\alpha - \frac{1}{2+\delta}\right)\int_{\threed}u^{3+\delta}\,dx - (1+\delta)\int_{\threed}(-\triangle)^{-1}u|\nabla u|^2 u^\delta\,dx.
\end{split}\end{equation}
Then perform an integration by parts to obtain (with $\kappa>0$ to be chosen)
\begin{align}
\int_{\threed}u^{3+\delta}\,dx &\nonumber= (2+\delta)\int_0^\infty \big(\frac{1}{r}\int_0^r u(s) s^2\,ds\big)u^{1+\delta}\partial_r u\, 4\pi r\,dr\\
&\label{eqn: I}=(2+\delta)\int_0^\infty \big(\frac{1}{r}\int_0^r u(s) s^2\,ds\big)\chi_{r\lesssim \kappa}\frac{u^{1+\frac{\delta}{2}}}{r}\partial_r u\, u^{\frac{\delta}{2}}\, 4\pi r^2\,dr\\
&\label{eqn: II}+(2+\delta)\int_0^\infty \big(\frac{1}{r}\int_0^r u(s) s^2\,ds\big)\chi_{r\gtrsim \kappa}\frac{u^{1+\frac{\delta}{2}}}{r}\partial_r u\,u^{\frac{\delta}{2}}\, 4\pi r^2\,dr.
\end{align}
To estimate \eqref{eqn: II}, we use Cauchy's inequality with $\gamma_0>0$ to write 
\begin{equation}\nonumber\begin{split}
&(2+\delta)\int_0^\infty \big(\frac{1}{r}\int_0^r u(s) s^2\,ds\big)\chi_{r\gtrsim \kappa}\frac{u^{1+\frac{\delta}{2}}}{r}\partial_r u\, u^{\frac{\delta}{2}}\, 4\pi r^2\,dr\\
&\leq (\frac{2+\delta}{2})\int_0^\infty \big(\frac{1}{r}\int_0^r u(s) s^2\,ds\big)\chi_{r\gtrsim \kappa}\big[\frac{\la r\ra}{\gamma_0}\frac{u^{2+\delta}}{r^2}+\frac{\gamma_0}{\la r\ra}|\partial_r u|^2\, u^{\delta}\big]\, 4\pi r^2\,dr\\
&\leq \gamma_0 C(\|u_0\|_{L^{\frac{3}{2}+\gamma}\cap L^1})\int_{\threed}\frac{1}{\la r\ra}|\partial_r u|^2\, u^{\delta}\,dx + C_1(\|u_0\|_{L^{\frac{3}{2}+\gamma}\cap L^1}, \gamma_0, \kappa).
\end{split}\end{equation}
To estimate \eqref{eqn: I}, first use Cauchy-Schwarz's inequality as
\begin{equation}\nonumber\begin{split}
&(2+\delta)\int_0^\infty \big(\frac{1}{r}\int_0^r u(s) s^2\,ds\big)\chi_{r\lesssim \kappa}\frac{u^{1+\frac{\delta}{2}}}{r}\partial_r u\, u^{\frac{\delta}{2}}\, 4\pi r^2\,dr\\
&\lesssim \kappa^{\nu}\big(\int_{\threed}\chi_{r\lesssim \kappa}^2\frac{u^{2+\delta}}{r^2}\,dx\big)^{\frac{1}{2}}\big(\int_{\threed}\tilde{\chi}_{r\lesssim \kappa}(\partial_r u)^2 u^\delta\,dx\big)^{\frac{1}{2}},
\end{split}\end{equation}
where we use $\frac{1}{r}\chi_{r\lesssim \kappa}\int_0^r u(s) s^2\,ds\lesssim \kappa^{\nu}$ for suitable $\nu=\frac{3}{q} - 1>0$
where $q=\frac{3+\gamma}{1+\gamma}<3$.

Further using Hardy's inequality we obtain
\begin{equation}\nonumber\begin{split}
&\kappa^{\nu}\big(\int_{\threed}\chi_{r\lesssim \kappa}^2\frac{u^{2+\delta}}{r^2}\,dx\big)^{\frac{1}{2}}\big(\int_{\threed}\tilde{\chi}_{r\lesssim \kappa}(\partial_r u)^2 u^\delta\,dx\big)^{\frac{1}{2}}\\
&\lesssim \kappa^{\nu}\big(\int_{\threed}(\chi_{r\lesssim \kappa}\partial_r u +\chi_{r\lesssim \kappa}'\frac{u}{r})^2 u^\delta \,dx\big)^{\frac{1}{2}}\big(\int_{\threed}\tilde{\chi}_{r\lesssim r_0}(\partial_r u)^2 u^\delta\,dx\big)^{\frac{1}{2}}\\
&\lesssim \kappa^\nu\big[\int_{\threed}\tilde{\chi}_{r\lesssim \kappa}(\partial_r u)^2 u^\delta\,dx
+
\int_{\threed}\big(\chi_{r\lesssim \kappa}'u\big)^2 u^\delta\, dx\big].
\end{split}\end{equation}
In the preceding, 
we have chosen the cutoff $\tilde{\chi}_{r\lesssim \kappa}$ such that $\tilde{\chi}_{r\lesssim \kappa}\chi_{r\lesssim \kappa} = \chi_{r\lesssim \kappa}$. Also, the implied absolute constant only depends on $\|u\|_{L^{\frac{3}{2}+\gamma}}$. Combining the above estimates for \eqref{eqn: I} and \eqref{eqn: II}, we infer that
\begin{equation}\nonumber\begin{split}
&\int_{\threed}\partial_t u u^{1+\delta}\,dx\\& \leq (\gamma_0+\kappa^{\nu})C_2(\|u_0\|_{L^{\frac{3}{2}+\gamma}\cap L^1})\int_{\threed}\frac{1}{\la r\ra}|\partial_r u|^2 u^\delta\,dx +C_3(\|u_0\|_{L^{\frac{3}{2}+\gamma}\cap L^1}, \gamma_0, \kappa)\\
&- (1+\delta)\int_{\threed}(-\triangle)^{-1}u|\nabla u|^2 u^\delta\,dx\leq C_3(\|u_0\|_{L^{\frac{3}{2}+\gamma}\cap L^1}, \gamma_0, \kappa),
\end{split}\end{equation}
provided we choose $\kappa$ and then $\gamma_0$ small enough such that 
\[
(\gamma_0+\kappa^{\nu})C_2(\|u_0\|_{L^{\frac{3}{2}+\gamma}\cap L^1})\leq D_2,
\]
where $D_2$ is as in Lemma~\ref{lem:bounds}, recall Remark~\ref{rem: lowerbound}. We then obtain the a priori bound 
\[
\sup_{0\leq t<T}\|u(t, \cdot)\|_{L^{2+\delta}}^{2+\delta}\leq \|u_0\|_{L^{2+\delta}}^{2+\delta} + TC_3(\|u_0\|_{L^{\frac{3}{2}+\gamma}\cap L^1}, \gamma_0, \kappa).
\]
In light of Proposition~\ref{prop:local}, the solution extends globally in time. The remaining assertions in Theorem \ref{thm:MainImprov} follow by the arguments in the proof of Theorem \ref{thm:Main}. 
\end{proof}

\begin{bibdiv}
\begin{biblist}

\bib{MR1055522}{article}{
   author={Arsen{\cprime}ev, A. A.},
   author={Buryak, O. E.},
   title={On a connection between the solution of the Boltzmann equation and
   the solution of the Landau-Fokker-Planck equation},
   language={Russian},
   journal={Mat. Sb.},
   volume={181},
   date={1990},
   number={4},
   pages={435--446},
   issn={0368-8666},
   translation={
      journal={Math. USSR-Sb.},
      volume={69},
      date={1991},
      number={2},
      pages={465--478},
      issn={0025-5734},
   },
}

\bib{MR2506070}{article}{
   author={Chen, Yemin},
   author={Desvillettes, Laurent},
   author={He, Lingbing},
   title={Smoothing effects for classical solutions of the full Landau
   equation},
   journal={Arch. Ration. Mech. Anal.},
   volume={193},
   date={2009},
   number={1},
   pages={21--55},
   issn={0003-9527},
}

\bib{MR1737547}{article}{
   author={Desvillettes, Laurent},
   author={Villani, C{\'e}dric},
   title={On the spatially homogeneous Landau equation for hard potentials.
   I. Existence, uniqueness and smoothness},
   journal={Comm. Partial Differential Equations},
   volume={25},
   date={2000},
   number={1-2},
   pages={179--259},
   issn={0360-5302},
}

\bib{MR2502525}{article}{
   author={Fournier, Nicolas},
   author={Gu{\'e}rin, H{\'e}l{\`e}ne},
   title={Well-posedness of the spatially homogeneous Landau equation for
   soft potentials},
   journal={J. Funct. Anal.},
   volume={256},
   date={2009},
   number={8},
   pages={2542--2560},
   issn={0022-1236},
}

\bib{MR0454266}{book}{
   author={Friedman, Avner},
   title={Partial differential equations},
   edition={Corrected reprint of the original edition},
   publisher={Robert E. Krieger Publishing Co., Huntington, N.Y.},
   date={1976},
   pages={i+262},
   review={\MR{0454266 (56 \#12517)}},
}

\bib{MR1946444}{article}{
   author={Guo, Yan},
   title={The Landau equation in a periodic box},
   journal={Comm. Math. Phys.},
   volume={231},
   date={2002},
   number={3},
   pages={391--434},
   issn={0010-3616},
}

\bib{MR0831655}{book}{
author={John, Fritz},
title={Analysis},
series={Applied Mathematical Sciences 1}
Volume={1},
edition={4}
publisher={Springer Verlag}
place={New York},
date={1971},
pages={x+249},
isbn={0-387-90609-6}
}

\bib{MR1817225}{book}{
   author={Lieb, Elliott H.},
   author={Loss, Michael},
   title={Analysis},
   series={Graduate Studies in Mathematics},
   volume={14},
   edition={2},
   publisher={American Mathematical Society},
   place={Providence, RI},
   date={2001},
   pages={xxii+346},
   isbn={0-8218-2783-9},
}

\bib{MR684990}{book}{
   author={Lifshitz, E. M.},
   author={Pitaevski{\u\i}, L. P.},
   title={Physical Kinetics; Course of theoretical physics [''Landau-Lifshits``]. Vol. 10},
   series={Pergamon International Library of Science, Technology,
   Engineering and Social Studies},
   note={Translated from the Russian by J. B. Sykes and R. N. Franklin},
   publisher={Pergamon Press},
   place={Oxford},
   date={1981},
   pages={xi+452},
   isbn={0-08-020641-7},
   isbn={0-08-026480-8},
}
		
\bib{MR1488298}{article}{
   author={Merle, Frank},
   author={Zaag, Hatem},
   title={Optimal estimates for blowup rate and behavior for nonlinear heat
   equations},
   journal={Comm. Pure Appl. Math.},
   volume={51},
   date={1998},
   number={2},
   pages={139--196},
   issn={0010-3640},
}

\bib{MR2100057}{article}{
   author={Strain, Robert M.},
   author={Guo, Yan},
   title={Stability of the relativistic Maxwellian in a collisional plasma},
   journal={Comm. Math. Phys.},
   volume={251},
   date={2004},
   number={2},
   pages={263--320},
}

\bib{MR1942465}{article}{
    author={Villani, C{\'e}dric},
     title={A review of mathematical topics in collisional kinetic theory},
 booktitle={Handbook of mathematical fluid dynamics, Vol. I},
     pages={71\ndash 305},
          book={
 publisher={North-Holland},
     place={Amsterdam},
        },
      date={2002},
}

\bib{MR1658660}{article}{
   author={Wu, Jiahong},
   title={Well-posedness of a semilinear heat equation with weak initial
   data},
   journal={J. Fourier Anal. Appl.},
   volume={4},
   date={1998},
   number={4-5},
   pages={629--642},
   issn={1069-5869},
}

\end{biblist}
\end{bibdiv}

\end{document}